\newtheorem{theorem}{Theorem}[section] 
\newtheorem{lemma}[theorem]{Lemma}
\newtheorem{proposition}[theorem]{Proposition}
\theoremstyle{definition}
\theoremstyle{remark}
\newtheorem{remark}[theorem]{Remark}
\numberwithin{equation}{section}
\newcommand{\R}{{\mathbb R}}
\newcommand{\N}{{\mathbb N}}
\title[Cospectral vertices on finite graphs]{Cospectral vertices, walk-regular planar graphs and the echolocation problem}
\author{Shi-Lei Kong}
\address{School of Mathematics, Sichuan University, Chengdu 610065, PR China}
\email{slkong@scu.edu.cn}
\author{Emmett L. Wyman}
\address{Department of Mathematics and Statistics, Binghamton University, Binghamton NY}
\email{ewyman@binghamton.edu}
\author{Yakun Xi}
\address{School of Mathematical Sciences, Zhejiang University, Hangzhou 310027, PR China}
\email{yakunxi@zju.edu.cn}
\begin{document}
\begin{abstract}
  We study cospectral vertices on finite graphs in relation to the echolocation problem on Riemannian manifolds. First, We prove a computationally simple criterion to determine whether two vertices are cospectral. Then, we use this criterion in conjunction with a computer search to find minimal examples of various types of graphs on which cospectral but non-similar vertices exist, including minimal walk-regular non-vertex-transitive graphs, which turn out to be non-planar. Moreover, as our main result, we classify all finite 3-connected walk-regular planar graphs, proving that such graphs must be vertex-transitive.
\end{abstract}

\maketitle

\section{Introduction}

Let $G$ be a finite graph with adjacency matrix $A$. Let $\phi_1, \phi_2, \ldots, \phi_n$ be an orthonormal basis of eigenfunctions (eigenvectors) of $A$ with respective eigenvalues $\lambda_1, \lambda_2, \ldots, \lambda_n$. We say two vertices $a$ and $b$ are \emph{cospectral} if
\begin{equation}\label{eq: cospectral}
    \sum_{\lambda_j = \lambda} |\phi_j(a)|^2 = \sum_{\lambda_j = \lambda} |\phi_j(b)|^2 \qquad \text{ for all eigenvalues $\lambda$.}
\end{equation}
 All graphs considered in this article will be finite, simple, undirected, and { connected} unless otherwise specified.

{  The study of cospectral vertices has gained more attention recently, (see e.g., \cite{Statetransfer,strongly,CHAN201986,EISENBERG20192821,kempton2020characterizing}), especially due to their significant role in quantum information theory. It has been shown \cite{Statetransfer} that for a perfect quantum state transfer to occur between two vertices in an interacting qubits network, it is necessary for these vertices to be cospectral. 

We say two vertices $a$, $b$ are \emph{similar} if there is an automorphism of $G$ mapping $a$ to $b$.} Two similar vertices are clearly also cospectral. {  Recently, Kempton, Sinkovic, Smith, and Webb \cite{kempton2020characterizing} show that two vertices are cospectral if
and only if they are similar in a certain weaker sense.} We are interested in the exceptional situation where two vertices are cospectral but not similar. We come to this problem from the smooth setting, where a graph is replaced with a Riemannian manifold. {  One can formulate a notion of cospectral points that reads exactly as \eqref{eq: cospectral}, provided the adjacency matrix is replaced with the Laplace-Beltrami operator (see Remark \ref{rem: adj vs lap}).} The question of whether or not there exists a pair of cospectral points that are not in the same orbit has a musical interpretation: \emph{Can one hear, up to symmetry, where a drum is struck?} We refer to this question as the echolocation problem. The last two authors show that for most compact Riemannian manifolds, the answer is `yes' \cite{echolocation}. So far, there are no known examples of connected manifolds where the answer is `no'.

In search of such negative examples, we turn to the graph-theoretic setting, where we might use a computer search. { In fact, it is well-known that graphs with non-similar cospectral vertices exist. The first such example recorded in literature is a tree with 9 vertices observed by Schwenk \cite{schwenk1973almost}.  To find more such examples and study their properties,} we must find a way to check when two vertices are cospectral that avoids floating point errors. The criterion \eqref{eq: cospectral} is not convenient in this way, so we need an alternative.

Our method builds off of work by Godsil and Smith \cite{strongly}, who show that two vertices $a$ and $b$ of a graph are cospectral with respect to the adjacent matrix $A$,
if and only if $(A^k)_{a,a}=(A^k)_{b,b}$ for all $ k\in\mathbb N.$ In other words, $a$ and $b$ are cospectral if, for all $k \in \N$, the number of walks of length $k$ starting and ending at $a$ is equal to the number of walks of length $k$ starting and ending at $b$. See also \cite{Coutinho2023normalized, twin} for results in other settings. When it comes to computation, this condition can be used to distinguish non-cospectral vertices but would require seemingly infinitely many operations to confirm that two vertices are cospectral. In this paper, we show that, in fact, one only needs to compute a finite number of powers of $A$ to check cospectrality for a finite graph.
\begin{proposition}\label{main 1}
    Let $G$ be a graph (possibly with self-loops) with $n$ vertices and $A$ be the corresponding adjacent matrix. Then two vertices $a$ and $b$ are cospectral if and only if
    \begin{equation}
    (A^k)_{a,a}=(A^k)_{b,b},\quad\text{for all } 0\le k\le n-1.
    \end{equation}
    Moreover, this condition is sharp in the sense that the upper limit $n-1$ cannot be made smaller.
\end{proposition}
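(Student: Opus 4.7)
The plan is to prove the biconditional via a Cayley--Hamilton argument and then exhibit an explicit graph witnessing tightness of the bound. The forward direction ($\Rightarrow$) is immediate from the spectral decomposition
\[
(A^{k})_{v,v}=\sum_{\lambda}\lambda^{k}\Bigl(\sum_{\lambda_{j}=\lambda}|\phi_{j}(v)|^{2}\Bigr),
\]
since the cospectral hypothesis \eqref{eq: cospectral} makes the inner sum identical for $v=a$ and $v=b$ at every eigenvalue $\lambda$, so the outer sum is identical at every $k\ge 0$, and in particular for $0\le k\le n-1$.

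For the backward direction ($\Leftarrow$) the key tool is Cayley--Hamilton. Since $A$ is $n\times n$, its characteristic polynomial $p(t)=\det(tI-A)$ has degree $n$ and satisfies $p(A)=0$, so $A^{n}$ is a fixed $\R$-linear combination of $I,A,\dots,A^{n-1}$ whose coefficients depend only on $p$, not on any vertex. Iterating yields scalars $\alpha_{k,i}$ independent of vertex with $A^{k}=\sum_{i=0}^{n-1}\alpha_{k,i}A^{i}$ for all $k\ge n$. Passing to the $(a,a)$ and $(b,b)$ diagonal entries and using the hypothesized equalities at $k=0,1,\dots,n-1$ propagates the equality to every $k\ge 0$; the Godsil--Smith criterion cited in the introduction then gives cospectrality.

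For sharpness I would exhibit the path $P_{5}$ on vertices $\{1,2,3,4,5\}$ with edges $\{i,i+1\}$ for $i=1,2,3,4$. A direct walk count shows that $(A^{k})_{2,2}=(A^{k})_{3,3}$ for $k=0,1,2,3$ (both sequences read $1,0,2,0$), whereas $(A^{4})_{2,2}=5\ne 6=(A^{4})_{3,3}$. Since $n=5$, the moments of vertices $2$ and $3$ agree for $0\le k\le n-2$ without forcing cospectrality, so the bound $n-1$ cannot be universally replaced by $n-2$. I expect this last step to be the least routine part of the argument: for small $n$ the bound can actually be improved (in a four-vertex graph, two vertices of equal degree automatically share their triangle count, so $k\le n-2$ already implies cospectrality), so to witness sharpness for arbitrarily large $n$ one would want to embed the $P_{5}$ phenomenon in longer graphs whose asymmetries sit at distance $\sim n/2$ from the two test vertices, ensuring that walks of length $<n-1$ do not see the perturbation.
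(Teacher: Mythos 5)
Your proof of the equivalence is correct but follows a different route from the paper. The paper works with the spectral measure $\mu_v=\sum_j|\phi_j(v)|^2\delta_{\lambda_j}$: cospectrality of $a$ and $b$ is literally the identity $\mu_a=\mu_b$ of two measures supported on the $m\le n$ distinct eigenvalues, such a measure is determined by its pairings with polynomials of degree $\le m-1$, and the $k$-th moment of $\mu_v$ is exactly $(A^k)_{v,v}$. This single computation yields both implications at once (so the Godsil--Smith criterion is reproved rather than invoked) and in fact gives the stronger bound $k\le m-1$. Your Cayley--Hamilton argument is a valid substitute for the backward direction: $A^n$ is a vertex-independent linear combination of $I,A,\dots,A^{n-1}$, so agreement of the diagonal entries up to exponent $n-1$ propagates to all exponents, after which you may cite Godsil--Smith. (Using the minimal polynomial in place of the characteristic polynomial would recover the sharper $m-1$.) The two approaches are essentially equivalent in content; yours outsources the ``all $k$ implies cospectral'' step to the cited lemma, while the paper's is self-contained.

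The sharpness part of your write-up has a genuine gap, which you yourself flag. The claim that the upper limit $n-1$ cannot be made smaller is a statement for every $n$, and your $P_5$ computation only witnesses it for $n=5$. The family you gesture at (embedding the $P_5$ phenomenon so that the asymmetry sits at distance roughly $n/2$ from the test vertices) is exactly what the paper uses: for odd $n=2\ell+1$ take the path on $n$ vertices and the two middle vertices $\ell,\ell+1$; for even $n=2\ell$ take the path with a self-loop attached to the last vertex and again the vertices $\ell,\ell+1$. In both cases $(A^k)_{\ell,\ell}=(A^k)_{\ell+1,\ell+1}$ for $k\le n-2$ while the entries differ at $k=n-1$, because a closed walk can only detect the asymmetry (the far endpoint, or the self-loop) after travelling to it and back. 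Note also that your parenthetical worry about small $n$ is resolved by the self-loop device, which the proposition explicitly permits: for $n=4$ the path $1$--$2$--$3$--$4$ with a loop at $4$ satisfies $(A^k)_{2,2}=(A^k)_{3,3}$ for $k\le 2$ but $(A^3)_{2,2}=0\ne 1=(A^3)_{3,3}$, so sharpness does hold there. To complete your argument you must either verify the moment agreement for such a family for general $n$ or supply another construction valid for all $n$.
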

We remark that, even though Godsil and Smith did not state it explicitly, {the first part of} Proposition \ref{main 1} follows by the proof of \cite[Lemma 2.1]{strongly}. We shall include a proof of Proposition \ref{main 1} in Section \ref{sec loopcount} for completeness. Then, we employ this condition in a computer search to find minimal examples of regular graphs with cospectral yet non-similar vertices in Section \ref{sec example}.

\begin{theorem}\label{main 2.1}
    The smallest graph containing a pair of vertices that are cospectral but not similar has eight vertices.
\end{theorem}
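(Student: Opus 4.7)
The proof naturally splits into two parts: an explicit construction showing that eight vertices suffice, and a finite case analysis showing that seven vertices do not.

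For the upper bound, the plan is to exhibit a connected graph $G$ on eight vertices together with two designated vertices $a, b$ that witness the property. To verify cospectrality of $a$ and $b$, I would appeal directly to Proposition \ref{main 1}: form the adjacency matrix $A$, compute the integer matrices $A^0, A^1, \ldots, A^7$, and check that the diagonal entries at $a$ and $b$ agree across all eight powers. Since this is a finite integer computation, there is no floating point issue. To verify that $a$ and $b$ are \emph{not} similar, I would produce a local invariant preserved by any automorphism that distinguishes them—for instance, the degree, the multiset of neighbor degrees, or the number of triangles through the vertex—so that no automorphism can send $a$ to $b$.

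For the lower bound, the plan is an exhaustive search over all connected graphs on at most seven vertices. These are tabulated (there are $1{,}044$ connected graphs on seven vertices and far fewer on smaller vertex counts), and can be generated by standard software such as \texttt{nauty}'s \texttt{geng}. For each such graph $G$ of order $n \le 7$, I would use Proposition \ref{main 1} to compute, at every vertex $v$, the integer vector
\begin{equation*}
    w(v) = \bigl((A^0)_{v,v},\,(A^1)_{v,v},\,\ldots,\,(A^{n-1})_{v,v}\bigr),
\end{equation*}
group vertices by $w(v)$ to obtain the cospectral classes, and then, for each non-singleton class, test whether all its vertices lie in a single orbit of $\operatorname{Aut}(G)$ (the orbits are produced by \texttt{nauty}, or computed by direct backtracking). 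The theorem then reduces to the computer-verified assertion that in every such graph each cospectral class coincides with an automorphism orbit.

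The main obstacle here is organizational rather than mathematical: producing the eight-vertex witness in a form where non-similarity is manifest, and trusting the exhaustive enumeration for $n \le 7$. What makes the search tractable and rigorous is precisely Proposition \ref{main 1}: it reduces cospectrality to a check of $n$ integer-valued quantities per vertex, avoiding both spectral decomposition and floating-point arithmetic, and it bounds the search for each fixed $n$ to a purely combinatorial finite problem.
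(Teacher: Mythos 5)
Your proposal matches the paper's approach: Theorem \ref{main 2.1} is established by exactly this kind of exhaustive computer search (the paper enumerates all $12346$ graphs on eight vertices and all smaller orders), using Proposition \ref{main 1} to decide cospectrality via integer walk counts and the orbits of the automorphism group to decide similarity, then exhibiting explicit eight-vertex witnesses. One minor correction: $1044$ is the number of \emph{all} graphs on seven vertices (there are $853$ connected ones), but since the paper's search ranges over all graphs and only afterwards observes that the $126$ minimal examples happen to be connected, checking all $1044$ is in fact what is wanted.
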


Riemannian manifolds have some rigid structure that graphs, in general, lack. We impose a variety of kinds of regularity on our graphs to emulate this structure. We first search amongst the regular graphs for examples. { This setting comes with the benefit that the adjacency matrix $A$ and the two standard symmetric normalizations of the Laplacian $L = D - A$ and $\mathcal L = I - D^{-\frac 1 2} A D^{-\frac 1 2}$ all have precisely the same eigenfunctions.} Indeed, we find such examples, but only amongst graphs of ten or more vertices.

\begin{theorem}\label{main 2.2}
    The smallest regular graph containing a pair of non-similar cospectral vertices has ten vertices.
\end{theorem}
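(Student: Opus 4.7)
The plan is to combine a single explicit ten-vertex construction with an exhaustive computer search over all connected regular graphs on at most nine vertices, using Proposition~\ref{main 1} as the basic cospectrality test.

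For the upper bound, I would exhibit one specific connected regular graph $G$ on ten vertices together with two vertices $a, b$ that are cospectral but not similar. Cospectrality is certified by forming the integer matrices $A, A^2, \ldots, A^{9}$ and checking that $(A^k)_{a,a} = (A^k)_{b,b}$ for $0 \le k \le 9$; by Proposition~\ref{main 1} this is both necessary and sufficient, and since $A$ has integer entries the verification is exact. Non-similarity is certified by computing $\mathrm{Aut}(G)$ (for instance using \texttt{nauty}) and observing that the orbit of $a$ does not contain $b$.

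For the lower bound, I would iterate over all connected $k$-regular graphs on $n$ vertices for every pair $(n,k)$ with $2 \le n \le 9$ and $1 \le k \le n-1$, using an established generator such as \texttt{geng -c} or Meringer's database of regular graphs. Since any two vertices of a vertex-transitive graph are automatically similar, the search may be restricted to non-vertex-transitive graphs; in particular all connected regular graphs on at most seven vertices can be dismissed immediately, and the remaining pool on eight and nine vertices is small. For each surviving graph I would compute the automorphism orbits and, for every pair $(a,b)$ lying in distinct orbits, test the equalities $(A^k)_{a,a} = (A^k)_{b,b}$ for $0 \le k \le n-1$. The theorem follows once no such pair passes the test in the range $n \le 9$.

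The main obstacle is not conceptual but rather one of reliability: one must trust that the enumeration of regular graphs is complete, that the orbit computation is correct, and that the cospectrality certificate is applied to every non-orbit pair. All three ingredients are standard and well-tested, the full search space for $n \le 9$ is small enough that the computation finishes essentially instantly, and because every test is carried out in $\Z$ there is no floating-point error to contend with, so the output can be independently reproduced and audited.
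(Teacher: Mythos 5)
Your proposal is essentially the paper's own argument: Theorem \ref{main 2.2} is established by an exhaustive computer search over all connected regular graphs on at most nine vertices, with cospectrality decided exactly over $\Z$ via Proposition \ref{main 1}, similarity decided by computing automorphism orbits, and the upper bound witnessed by explicit ten-vertex examples. One correction, however: your shortcut that ``all connected regular graphs on at most seven vertices are vertex-transitive, hence can be dismissed immediately'' is false. The complement of the disjoint union of a triangle and a $4$-cycle is a connected $4$-regular graph on seven vertices whose automorphism group has two vertex orbits. That graph happens to contain no non-similar cospectral pair --- the two orbits are already distinguished by $(A^3)_{v,v}$, i.e.\ by triangle counts --- but this must actually be checked rather than deduced from a vertex-transitivity claim that does not hold; as written, your lower-bound argument silently skips a case. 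Since you also describe running the full test for every $n \le 9$, the fix is simply to drop the unjustified dismissal and let the search cover $n = 7$ (and any other small non-vertex-transitive regular graphs) like the rest.
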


Even more structured are \emph{walk-regular graphs} \cite{godsil1980feasibility}, all of whose vertices are cospectral. A vertex $a$ has degree $(A^2)_{a,a}$, and hence all walk-regular graphs are also regular. Seeking minimal examples amongst walk-regular graphs yields:

\begin{theorem}\label{main 2.3}
    The smallest walk-regular graph containing a pair of non-similar vertices has twelve vertices.
\end{theorem}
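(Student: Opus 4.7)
The plan is to reduce both halves of the statement—the lower bound (no walk-regular non-vertex-transitive graph on at most $11$ vertices) and the upper bound (such a graph exists on $12$ vertices)—to a finite computer search enabled by Proposition \ref{main 1}. Recall that a graph is walk-regular precisely when $(A^k)_{v,v}$ depends only on $k$ and not on $v$, i.e.\ when every pair of vertices is cospectral. By Proposition \ref{main 1}, a graph on $n$ vertices is walk-regular if and only if the diagonal entries of $A^k$ are all equal for each $k = 0, 1, \ldots, n-1$. This turns an \emph{a priori} infinite condition into a polynomial-time check that avoids floating point arithmetic and is well-suited for exhaustive search.

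The computation proceeds in three stages. First, I would enumerate all connected regular graphs on $n$ vertices for $n \le 12$ using a standard tool such as \texttt{nauty}'s \texttt{geng}; this is justified because walk-regularity implies regularity (the degree of $v$ is $(A^2)_{v,v}$). Second, for each such graph I would apply the walk-regularity test above, retaining only the walk-regular examples. Third, for each walk-regular graph so identified I would compute its automorphism group and check whether the action on vertices is transitive. For $n \le 11$, every walk-regular graph produced by this pipeline should turn out to be vertex-transitive, proving the lower bound; the assertion reduces entirely to the verification that this search terminates with no counterexample.

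For the upper bound, I would exhibit an explicit walk-regular graph on $12$ vertices that is not vertex-transitive; the search above will surface candidates, and the natural ones to present are small $4$-regular graphs obtainable from known constructions (for instance via Godsil--McKay switching applied to a distance-regular graph, a standard source of walk-regular but not vertex-transitive graphs). Concretely, I would display the adjacency matrix (or a drawing) of one such graph, invoke Proposition \ref{main 1} once more to certify walk-regularity by computing $(A^k)_{v,v}$ for $0 \le k \le 11$, and exhibit two vertices in distinct orbits of the automorphism group.

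The main obstacle is practical rather than conceptual: the enumeration of all connected regular graphs up to $n = 12$ is substantial, and a naive search scales poorly. Using \texttt{geng}'s built-in regularity and connectivity filters makes this tractable, but the write-up must be careful to describe the search parameters precisely enough that the result is reproducible, and the presentation of the $12$-vertex example must be self-contained so that walk-regularity and the failure of vertex-transitivity can be verified by hand or by independent computation.
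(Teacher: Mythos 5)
Your proposal matches the paper's approach: an exhaustive computer search over (connected) regular graphs on at most $12$ vertices, using Proposition \ref{main 1} to certify walk-regularity with finitely many exact integer computations, and then checking vertex-transitivity via the automorphism group; the paper implements this in Julia rather than with \texttt{geng}, but that is an implementation detail. The paper additionally records that there are exactly four minimal examples on $12$ vertices (degrees $4$, $5$, $6$, $7$, forming two edge-complement pairs), but your plan would surface the same examples.
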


Smooth analogs of walk-regular graphs were studied by Wang and the last two authors in \cite{echolocation2}. There they show that every compact two-dimensional surface satisfying the smooth version of walk-regularity is also homogeneous---the isometry group acts transitively on the points in the surface. The assumption that our manifold is two-dimensional is essential to the proof, and it is yet unknown if there are any higher-dimensional manifolds that are walk-regular but not homogeneous. {In addition, none of the walk-regular graphs we found are planar.} This leads us to consider planar walk-regular graphs.

We are able to prove the following discrete analog of the result of \cite{echolocation2}.

\begin{theorem}\label{main 3}
    Every 3-connected walk-regular planar graph is isomorphic to {the polyhedral graphs of} one of the following:
		\begin{itemize}
			\item 5 regular polyhedrons, also known as Platonic solids: tetrahedron, hexahedron, octahedron, dodecahedron, and icosahedron;
            \item 13 semi-regular polyhedrons, also known as Archimedean solids, pictured in Figure \ref{fig: semi-regular polyhedrons};
			\item the $m$-gonal prisms for $m = 3$ and $m \geq 5$; and
			\item the $m$-gonal antiprisms for $m \geq 4$.
		\end{itemize}
		Consequently, every walk-regular planar graph is vertex-transitive.
\end{theorem}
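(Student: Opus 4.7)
The plan is to exploit the rigidity of $3$-connected planar graphs: by Steinitz's theorem, a $3$-connected planar graph $G$ is the $1$-skeleton of a (combinatorially unique) convex polyhedron $P$, and by Whitney's theorem its planar embedding, and hence its set of faces, is unique up to reflection. Walk-regularity applied with $k=2$ and $k=3$ already tells us that $G$ is $d$-regular and that every vertex lies on the same number of triangles; combining $d$-regularity with Euler's formula $V-E+F=2$ and the inequality $3F \leq 2E = dV$ yields the familiar restriction $d \in \{3,4,5\}$. After this we have only finitely many face-vectors $(f_k)$ to consider for each $d$.

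First I would use the walk counts $(A^k)_{v,v}$ to show that the multiset of face sizes incident to each vertex is the same for every vertex. The idea is that closed walks of length $k$ at $v$ which traverse a $k$-face boundary can be isolated from other closed walks inductively on $k$, using the fact (due to Tutte) that in a $3$-connected planar graph the face cycles are exactly the non-separating induced cycles. Walk-regularity then forces the vertex-incidence multiset $\{p_1,\ldots,p_d\}$ to be independent of the vertex. A further count---looking at closed walks that wind around two consecutive faces at $v$, of length roughly $p_i+p_j$---promotes the constant multiset to a constant \emph{cyclic} vertex figure $(p_1.p_2.\cdots.p_d)$.

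Once $P$ has a constant cyclic vertex figure it is a uniform convex polyhedron, and the classical enumeration produces exactly the listed families: the $5$ Platonic solids, the $13$ Archimedean solids, the $m$-gonal prisms, and the $m$-gonal antiprisms, with the redundancies $m=4$ (cube $=$ $4$-gonal prism) and $m=3$ ($3$-gonal antiprism $=$ octahedron) removed. The final sentence of the theorem, concerning general walk-regular planar graphs, would follow by observing that any cut vertex or $2$-separator in a connected walk-regular graph produces an imbalance in some power of $A$, so walk-regularity forces $3$-connectivity; each graph on the resulting list is then readily seen to be vertex-transitive.

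The main obstacle is the cyclic-refinement step together with ruling out the pseudorhombicuboctahedron, which has constant vertex-configuration multiset $(3.4.4.4)$ but whose two vertex orbits disagree globally. Identifying an explicit walk length at which those orbits are distinguished, and carrying out the inductive extraction of face cycles cleanly when the girth is $3$ or $4$---so that not every short cycle bounds a face---is where the delicate combinatorics will sit.
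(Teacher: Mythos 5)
Your outline follows the same overall strategy as the paper---use closed-walk counts to force a constant vertex configuration (modified Schl\"afli symbol), then invoke the classification of polyhedra with constant vertex figure---and you have correctly located the two danger spots (the pseudorhombicuboctahedron and short non-facial cycles). However, the central step is asserted rather than proved, and the mechanism you propose for it would not go through as stated. The claim that ``closed walks of length $k$ at $v$ which traverse a $k$-face boundary can be isolated from other closed walks inductively on $k$'' hides three separate difficulties. First, one must convert closed-walk counts into cycle counts: a closed walk of length $k$ can be contractible, can trace a shorter cycle with appended or repeated edges, or can trace a chorded cycle, and the number of such ``junk'' walks through $v$ depends on the local face structure at $v$---which is precisely what you are trying to prove is constant. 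The paper breaks this circularity only for $k\le 5$ unconditionally and for $k=6$ under the hypothesis $\mathcal C_3\le 1$ (Lemma \ref{th:cycles}), and in several cases has to push explicit counts out to $k=8$ and $k=10$ with hand-computed constants. Second, Tutte's characterization of face cycles as non-separating induced cycles does not isolate face-boundary walks, because chordless \emph{separating} cycles of the relevant lengths genuinely occur (the equatorial $4$-cycle of the octahedron, the $5$-cycles of the icosahedron and of the $4$-gonal antiprism); the paper must prove, by extensive case analysis using $3$-connectivity and walk-regularity together, that outside these named exceptions every chordless cycle of length $3$, $4$, $5$ is a face (Lemmas \ref{th:3-face}--\ref{th:5-face}). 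Third, your ``further count'' promoting the constant multiset to a constant cyclic order is not specified; walks of length roughly $p_i+p_j$ winding around two consecutive faces cannot be separated from the mass of other walks of that length without already knowing the face structure. In short, the proposal is a roadmap whose hard segments---exactly the content of Lemma \ref{S symbol lemma} and the fifteen-case analysis---are deferred rather than carried out.

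Two smaller points. You flag the pseudorhombicuboctahedron but do not exclude it; the paper does so by exhibiting two vertices with different numbers of closed walks of length $7$, and some such explicit computation is unavoidable since the constant cyclic vertex figure $(3,4,4,4)$ does not distinguish it from the rhombicuboctahedron. And your argument for the final sentence---that any cut vertex or $2$-separator in a connected walk-regular graph produces an imbalance in some power of $A$---is false as stated: the cycle $C_n$ is walk-regular, planar, and only $2$-connected. The reduction of the general planar case to the $3$-connected one needs a separate (if short) argument handling degrees $d\le 2$ and, for $d\ge 3$, an actual proof that low connectivity is incompatible with walk-regularity in the planar setting.
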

\begin{figure}
	\includegraphics[width=0.8\textwidth]{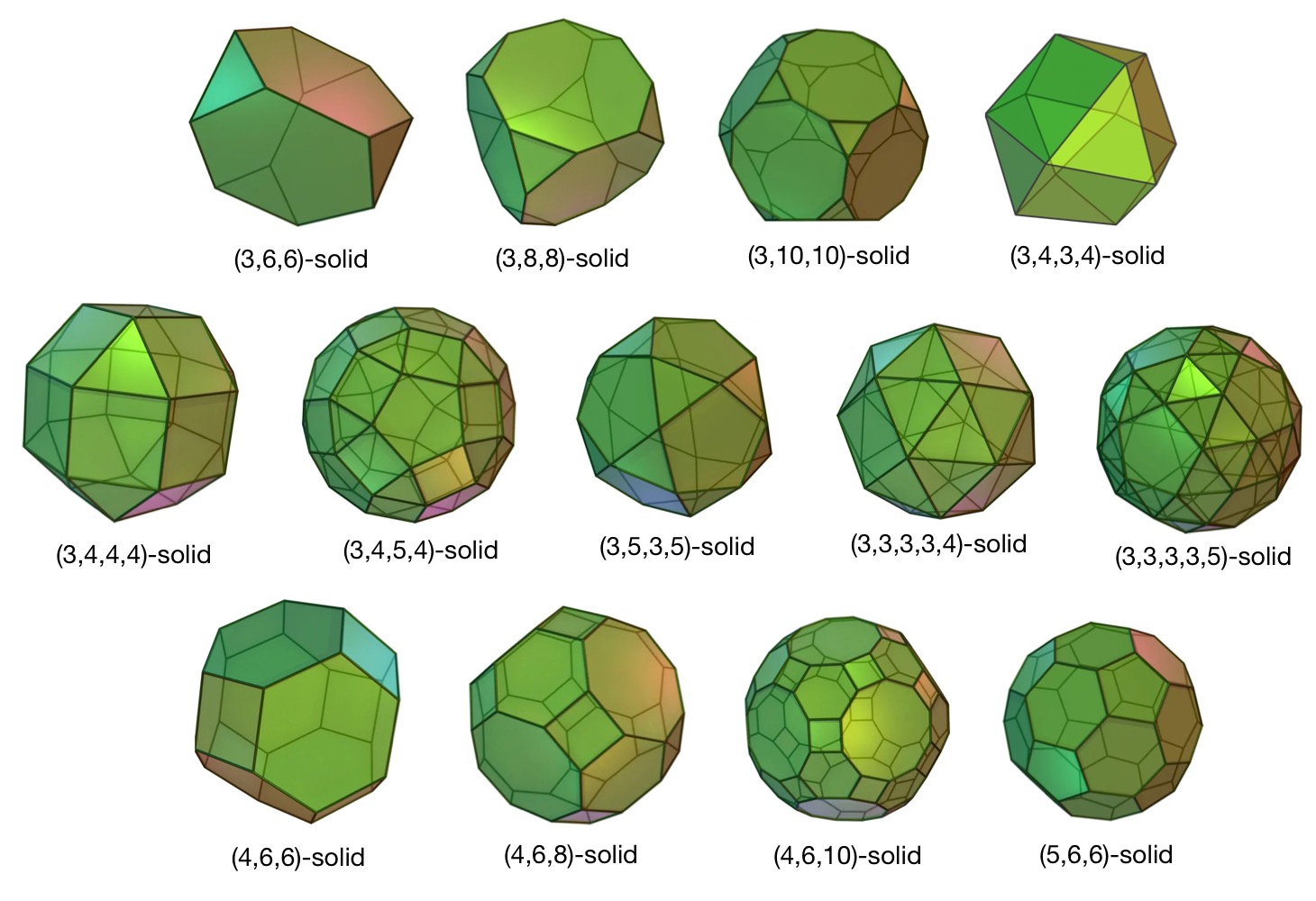}
	\caption{All the 13 semi-regular polyhedrons. The graphics are due to Wikipedia user Cyp \cite{cyp}, used with permission.}
	\label{fig: semi-regular polyhedrons}
\end{figure}

A direct consequence of the classical theorem of Mani \cite{mani} says that all 3-connected planar vertex-transitive finite graphs are exactly those listed above. Fleischner and Imrich \cite{transitiveplanar} employed the modified Schl\"afli symbol to give a direct proof of this fact.  Theorem \ref{main 3} is a refinement of this result of Mani and Fleischner--Imrich, as walk-regularity is, in general, strictly weaker than vertex-transitivity. The modified Schl\"afli symbol again plays a crucial role in our proof of Theorem 
\ref{main 3}. To be more detailed, we first attribute the counting of short closed walks to the counting of short cycles, and by some delicate arguments, we prove that in most 3-connected walk-regular planar graphs, every short cycle without chords must be a face, further demonstrating that the modified Schl\"afli symbol at each vertex is identical.

{\begin{remark}
   Although the planarity assumption might seem excessively restrictive, it is sharp in the sense that there exists a walk-regular toroidal graph that is not vertex-transitive. See Figure \ref{fig: torus}.
\end{remark}}

\subsection*{Organization of the paper}

In Section \ref{sec loopcount}, we prove Proposition \ref{main 1}. In Section \ref{sec example}, we give a short overview of our computer search and present a few of our minimal examples. The code is available in full on GitHub, along with a repository of minimal examples (https://github.com/ewy-man/graph-echolocation). Section \ref{sec planar} is dedicated to the proof of Theorem \ref{main 3}.

\subsection*{Acknowledgements} Xi was  supported by the National Key Research and Development
Program of China No. 2022YFA1007200
 and NSF China Grant No. 12171424. Kong was supported by NSF China Grant No. 12301109 and the Fundamental Research Funds for the Central Universities of China No. YJ202343. Wyman was supported by NSF grant DMS-2204397.  Wyman extends a thanks to Richard Lange for his very patient guidance through setting up a GitHub repository.

\section{Closed walks of bounded length determine if two vertices are cospectral}\label{sec loopcount}

{  Let $A$ be the adjacency matrix of some graph $G$ on $n$ vertices, or any real-symmetric $n \times n$ matrix for that matter. Let $\phi_1, \ldots, \phi_n$ be a real orthonormal eigenbasis for $A$ with respective eigenvalues $\lambda_1, \ldots, \lambda_n$. For each vertex $a$, consider the measure
\[
    \mu_a = \sum_j |\phi_j(a)|^2 \delta_{\lambda_j}
\]
on $\R$. Observe that two vertices $a$ and $b$ are cospectral if and only if $\mu_a = \mu_b$.

Let $m \leq n$ denote the count of \emph{distinct} eigenvalues and note each measure $\mu_a$ is determined by its pairing with degree $m-1$ polynomials. In particular, the measure $\mu_a$ is uniquely determined by its pairings
\[
    \int_\R t^k \, d\mu_a(t) = \sum_{j} |\phi_j(a)|^2 \lambda_j^k \qquad \text{ for } k = 0,\ldots, m-1
\]
with the standard basis $\{1, t, \ldots, t^{m-1}\}$ for the degree $m-1$ polynomials. We can arrive at the same set of quantities from a different direction, namely
\[
    A_{a,a}^k = \sum_{j,\ell} \phi_j(a) (\phi_j^t A^k \phi_\ell) \phi_\ell(a) = \sum_j |\phi_j(a)|^2 \lambda_j^k.
\]
In summary, we have shown:

\begin{lemma}
    Let $A$ be a real-symmetric $n \times n$ matrix. Then, $\mu_a = \mu_b$ if and only if
    \[
        A^k_{a,a} = A^k_{b,b} \qquad \text{ for all $k = 0,1,\ldots,n-1$.}
    \]
\end{lemma}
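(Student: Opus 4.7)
The plan is to realize both $\mu_a$ and $\mu_b$ as measures on the finite spectrum of $A$ and then appeal to the fact that a signed measure supported on $m$ points is determined by its first $m$ moments (via Vandermonde invertibility). Since the setup before the lemma already identifies the objects, the substantive work is to (i) relate the moments $\int t^k\,d\mu_a$ to the diagonal entries $A^k_{a,a}$, and (ii) argue that $n$ moments always suffice even though only $m \leq n$ distinct eigenvalues appear.

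First I would verify the moment identity. Using the spectral decomposition $A = \sum_j \lambda_j \phi_j \phi_j^t$ and orthonormality, one computes directly
\[
    A^k_{a,a} = \sum_j |\phi_j(a)|^2 \lambda_j^k = \int_\R t^k \, d\mu_a(t),
\]
which is precisely the computation already displayed in the excerpt. This identifies the hypothesis $A^k_{a,a} = A^k_{b,b}$ with the equality of $k$-th moments of $\mu_a$ and $\mu_b$.

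The forward direction is immediate: if $\mu_a = \mu_b$ then pairing with any continuous function agrees, so in particular the moments agree for every $k$, and thus for $k=0,1,\ldots,n-1$. For the reverse direction, note both $\mu_a$ and $\mu_b$ are supported inside $\{\lambda_1,\ldots,\lambda_n\}$, which contains exactly $m \leq n$ distinct real numbers $\Lambda_1 < \cdots < \Lambda_m$. Writing $\mu_a = \sum_i \alpha_i \delta_{\Lambda_i}$ and $\mu_b = \sum_i \beta_i \delta_{\Lambda_i}$, the hypothesis that the first $n$ (in particular the first $m$) moments agree says the Vandermonde matrix $V_{ki} = \Lambda_i^k$ for $k=0,\ldots,m-1$ sends $\alpha - \beta$ to zero. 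Since the $\Lambda_i$ are distinct, $V$ is invertible, so $\alpha = \beta$ and $\mu_a = \mu_b$.

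The only potentially subtle point is making sure we use the correct bound. The essential bound is $m$, not $n$; the lemma states $n-1$ because $n$ is what is combinatorially natural (and readily available without first diagonalizing $A$) and since $m \leq n$ always, the weaker condition through $k = n-1$ still forces the stronger condition through $k = m-1$. No sharpness discussion is needed for the lemma itself (the sharpness of Proposition \ref{main 1} is a separate matter and is handled elsewhere in Section \ref{sec loopcount}).
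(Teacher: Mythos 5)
Your proposal is correct and follows essentially the same route as the paper: the paper likewise identifies $A^k_{a,a}$ with the $k$-th moment of $\mu_a$ via the spectral decomposition and observes that a measure supported on the $m \leq n$ distinct eigenvalues is determined by its pairings with polynomials of degree $m-1$, which is precisely your Vandermonde argument made explicit. Your added detail on the invertibility of the Vandermonde matrix and the remark that the bound $n-1$ is merely a convenient weakening of the essential bound $m-1$ are both accurate and consistent with the paper's treatment.
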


We note that the number $n-1$ of counts is sharp since the adjacency matrix $A$ of a simple path with $n$ vertices for every odd $n = 2\ell+1$ (i.e., $A_{i,j} = \delta_{|i-j|=1}$) as well as the adjacency matrix $A$ of a simple path with $n$ vertices and a self-loop attached to the last vertex for every even $n = 2\ell$ (i.e., $A_{i,j} = \delta_{|i-j|=1} + \delta_{i=j=n}$) satisfies
\[
    A^k_{\ell,\ell} = A^k_{\ell+1, \ell+1} \quad \text{for all}~k=0,1,\ldots,n-2 \quad \text{but} \quad A^{n-1}_{\ell,\ell} < A^{n-1}_{\ell+1,\ell+1}.
\]
}

\section{Minimal examples}\label{sec example}

An exhaustive list of minimal examples of graphs with cospectral, non-similar vertices under regularity conditions of varying strength can be found on GitHub (https://github.com/ewy-man/graph-echolocation) along with the code used to generate them.

The code is written in the Julia language (see \cite{julia, juliagraphs}) and contains a number of tools:
\begin{enumerate}
    \item A few functions that generate exhaustive lists of non-isomorphic graphs on a specified number of vertices. An optional filter may be passed to these functions to narrow down and speed up the search.
    \item A function which returns the orbits of the action of the isomorphism group on the vertices.
    \item A function that determines the cospectrality of the vertices via Proposition \ref{main 1}.
    \item A number of other convenience functions, such as those that determine if a graph is walk-regular or if there exist non-similar cospectral pairs.
\end{enumerate}
Once loaded into the Julia REPL, these functions can be called to generate the minimal examples below and to check that they are indeed minimal. The first listed operation is the most time-intensive, but typically does not take more than a minute to run on a personal computer.

\subsection{Minimal graphs with non-similar cospectral vertices} The smallest graphs that contain non-similar cospectral pairs of vertices have eight vertices. Of all 12346 non-isomorphic graphs on 8 vertices, 126 of them contain non-similar cospectral vertices. Four examples are depicted in Figure \ref{fig: minimal non-similar cospectral}. {Furthermore, all of the 126 minimal examples are connected and none of them are trees.}

\begin{figure}
    \includegraphics[width=0.7\textwidth]{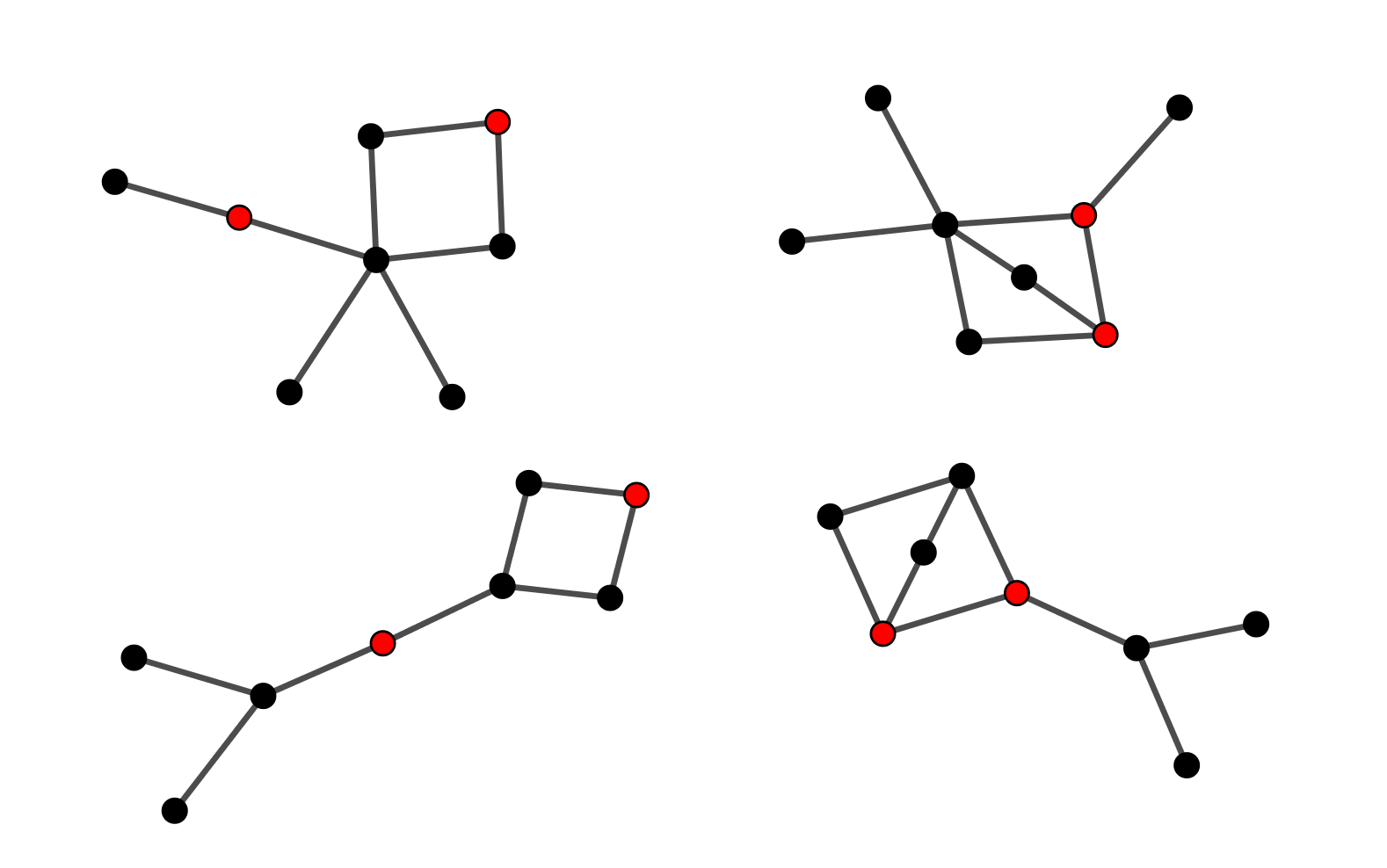}
    \caption{Four examples of graphs with eight vertices with a pair of non-similar cospectral vertices. The non-similar cospectral pairs are marked in red.}
    \label{fig: minimal non-similar cospectral}
\end{figure}

\subsection{Minimal regular graphs with non-similar cospectral vertices} The smallest regular graphs which contain non-similar cospectral pairs of vertices have ten vertices, and of those only those of degree 3, 4, 5, and 6 have minimal examples.
\begin{center}
    \begin{tabular}{c|c|c}
    $d$ & number of $d$-regular examples & number of $d$-regular graphs \\ \hline
    $3$ & 3 & 21 \\
    $4$ & 22 & 60 \\
    $5$ & 22 & 60 \\
    $6$ & 3 & 21
    \end{tabular}
\end{center}

The symmetry in the table is not a coincidence.

\begin{proposition}\label{prop: edge complements} Let $G = (V,E)$ be a finite graph and let $G' = (V, E^c)$ denote its edge complement.
\begin{enumerate}
\item $G$ and $G'$ have the same graph isomorphism group.
\item If $G$ and $G'$ are both connected, then two vertices are cospectral in $G$ if and only if they are cospectral in $G'$.
\end{enumerate}
\end{proposition}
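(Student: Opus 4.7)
For part (1), I would argue directly. A permutation $\sigma$ of $V$ is a graph automorphism of $G = (V, E)$ exactly when it preserves $E$ in the sense that $\{u, v\} \in E$ if and only if $\{\sigma(u), \sigma(v)\} \in E$ for every pair of distinct vertices $u, v$. Since $E$ and $E^c$ partition the set $\binom{V}{2}$ of two-element subsets of $V$, preservation of $E$ is equivalent to preservation of $E^c$. Hence $\mathrm{Aut}(G) = \mathrm{Aut}(G')$.

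For part (2), by Proposition \ref{main 1}, cospectrality of $a, b$ in $G$ is equivalent to the equality of closed-walk counts $(A^k)_{a,a} = (A^k)_{b,b}$ for all $k \geq 0$, where $A$ is the adjacency matrix of $G$. The adjacency matrix of $G'$ is $A' = J - I - A$, where $J = \mathbf{1}\mathbf{1}^t$ is the all-ones matrix. The strategy is to compute the walk generating function $\widetilde W_a(z) := \sum_k ((A')^k)_{a,a} z^k = ((I - zA')^{-1})_{a,a}$ in terms of invariants of $A$, and show that $\widetilde W_a = \widetilde W_b$ if and only if $W_a = W_b$, where $W_a(z) := ((I - zA)^{-1})_{a,a}$. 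Applying the Sherman--Morrison formula with $N := (z+1)I + A$ and using $zI - A' = N - \mathbf{1}\mathbf{1}^t$ yields
\[
((zI - A')^{-1})_{a,a} = (N^{-1})_{a,a} + \frac{((N^{-1}\mathbf{1})_a)^2}{1 - \mathbf{1}^t N^{-1}\mathbf{1}}.
\]

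The first term $(N^{-1})_{a,a}$ has the power-series expansion $\sum_k (-1)^k (A^k)_{a,a}/(z+1)^{k+1}$, whose coefficients are closed-walk counts and hence coincide at cospectral $a, b$. In the regular case $A\mathbf{1} = d\mathbf{1}$, the vector $N^{-1}\mathbf{1}$ is a scalar multiple of $\mathbf{1}$, so the correction term is independent of the vertex, and the equivalence $W_a = W_b \iff \widetilde W_a = \widetilde W_b$ follows immediately. The main obstacle is the general (irregular) case, where $(N^{-1}\mathbf{1})_a$ --- equivalently, the walk-to-anywhere generating function $\sum_k (A^k\mathbf{1})_a z^k$ --- is not obviously determined by the closed-walk counts $(A^k)_{a,a}$. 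Here I expect the connectedness hypothesis on both $G$ and $G'$ to come in via Perron--Frobenius, combined with the symmetry $(G')' = G$ (allowing one to interchange the roles of $A$ and $A'$), to force $(N^{-1}\mathbf{1})_a = \pm (N^{-1}\mathbf{1})_b$ and complete the argument.
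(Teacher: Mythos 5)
Part (1) is fine and matches the paper's (one-line) treatment. For part (2), however, your proposal is not a complete proof: it establishes the claim only for regular graphs and explicitly leaves the irregular case open, hoping that connectedness plus Perron--Frobenius will force $(N^{-1}\mathbf 1)_a = \pm (N^{-1}\mathbf 1)_b$. That hope is not an argument, and it is not clear it can be made into one along the route you chose. Your own Sherman--Morrison computation shows that, once $(A^k)_{a,a}=(A^k)_{b,b}$ for all $k$ is granted, cospectrality of $a$ and $b$ in $G'$ is equivalent to the equality of the ``walks from $a$ to anywhere'' generating functions $\sum_k (A^k\mathbf 1)_a z^k$ and $\sum_k (A^k\mathbf 1)_b z^k$ (the sign ambiguity is resolved by the common leading term). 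These counts are not controlled by the closed-walk counts of Proposition \ref{main 1} in an irregular graph, and Perron--Frobenius only constrains the top eigenvector; it says nothing about the pairings of the remaining eigenvectors with $\mathbf 1$. So the step you flag as the obstacle really is one, and the proof does not close.

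The paper sidesteps this entirely by working with the graph Laplacian rather than the adjacency matrix: writing $L' = nI - \mathbf 1 - L$ (with $\mathbf 1$ the all-ones matrix), the connectedness of both $G$ and $G'$ is used precisely to identify both kernels with the constant functions. Every eigenfunction $\phi$ of $L$ with eigenvalue $\lambda>0$ is then orthogonal to the constants, hence annihilated by the all-ones matrix, hence satisfies $L'\phi=(n-\lambda)\phi$; so $L$ and $L'$ share an orthonormal eigenbasis and therefore have exactly the same cospectral pairs. That is a five-line argument, and it is where the connectedness hypothesis genuinely enters --- not through Perron--Frobenius. In the intended application all graphs are regular, where the adjacency and Laplacian notions of cospectrality coincide by Remark \ref{rem: adj vs lap}, and where your Sherman--Morrison argument also closes immediately since $N^{-1}\mathbf 1$ is a scalar multiple of $\mathbf 1$; if you want to salvage your approach in general, you should replace $A$ by $L$ throughout, at which point the correction term becomes vertex-independent for the same orthogonality reason.
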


The proposition explains the symmetry we see in the table. Indeed, all of the minimal examples counted by the table here happen to be connected. Each degree $3$ example's edge complement is a minimal degree $6$ example, and similarly for degree $4$ and $5$ examples.

{ 
\begin{remark}\label{rem: adj vs lap}
    Given a $d$-regular graph $G$ with adjacency matrix $A$, its graph Laplacian is written
    \[
        L = dI - A.
    \]
    As such, $A$ and $L$ share precisely the same eigenfunctions. Hence, two vertices $a$ and $b$ of a $d$-regular graph are cospectral if and only if
    \[
        \sum_{\lambda_j = \lambda} |\phi_j(a)|^2 = \sum_{\lambda_j = \lambda} |\phi_j(b)|^2
    \]
    where $\phi_1, \ldots, \phi_n$ is an orthonormal Laplace eigenbasis with $L\phi_j = \lambda_j \phi_j$.
\end{remark}
}

\begin{proof}
Part (1) is obvious and well-known. For part (2), note the Laplacian $L'$ on $G'$ can be written in terms of the Laplacian $L$ on $G$ as
\[
    L' = nI - \mathbf 1 - L,
\]
where $\mathbf 1$ is the $n \times n$ matrix with a one in every entry. It suffices to show that an eigenfunction $\phi$ for $L$ is also an eigenfunction for $L'$. Since $G$ and $G'$ are both connected, their trivial eigenspaces are both spanned by the constant function, so it suffices to show $L$ and $L'$ share nonconstant eigenfunctions. If $\phi$ is an eigenfunction of $L$ with eigenvalue $\lambda > 0$, it is orthogonal to the constant eigenspace and hence lies in the kernel of $\mathbf 1$. Hence,
\[
    L'\phi = (n - \lambda)\phi.
\]
The reverse argument is identical.
\end{proof}
{ We remark that, as a consequence, the edge complement of a walk-regular graph is again walk-regular. This is Theorem 4.2 in the work of Godsil and Mckay \cite{godsil1980feasibility} on walk-regular graphs.}

\subsection{Minimal walk-regular, non-vertex-transitive graphs}

There are no walk-regular, non-vertex-transitive graphs, which have less than 12 vertices. There are exactly four minimal examples with twelve vertices, one each of degree 4, 5, 6, and 7. Again, these minimal examples are connected, and so can be put into edge-complement pairs; the examples of degrees $4$ and $7$ are edge complements, and so are the examples of $5$ and $6$. The degree $4$ and $5$ examples are depicted in Figure \ref{fig: minimal example}. Furthermore, even though the degree 4 example is not planar, it can be embedded into a torus, as shown in Figure \ref{fig: torus}.

\begin{figure}
    \[
        \includegraphics[align=c,width=0.4\textwidth]{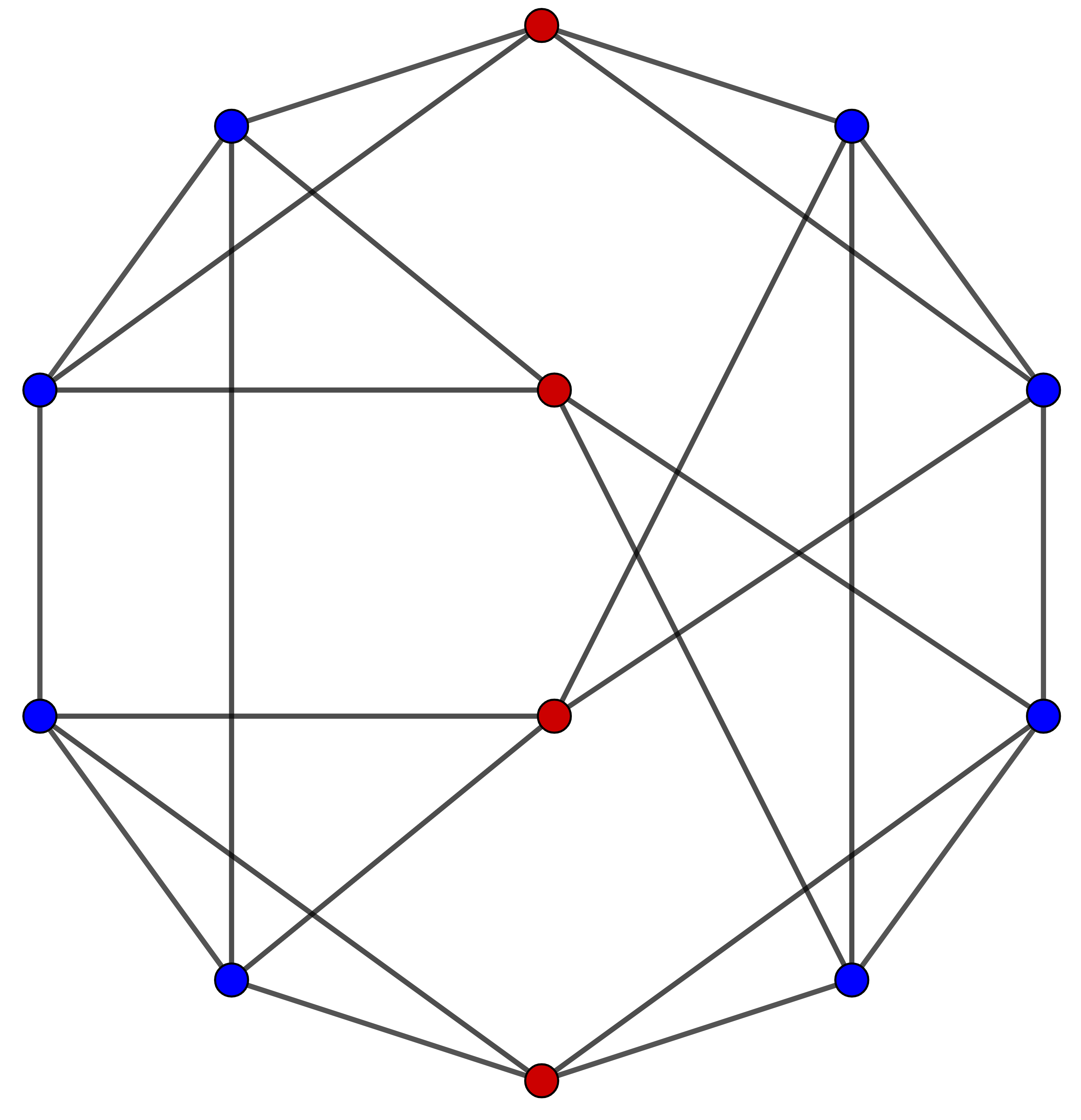}
        \qquad
        \includegraphics[align=c,width=0.4\textwidth]{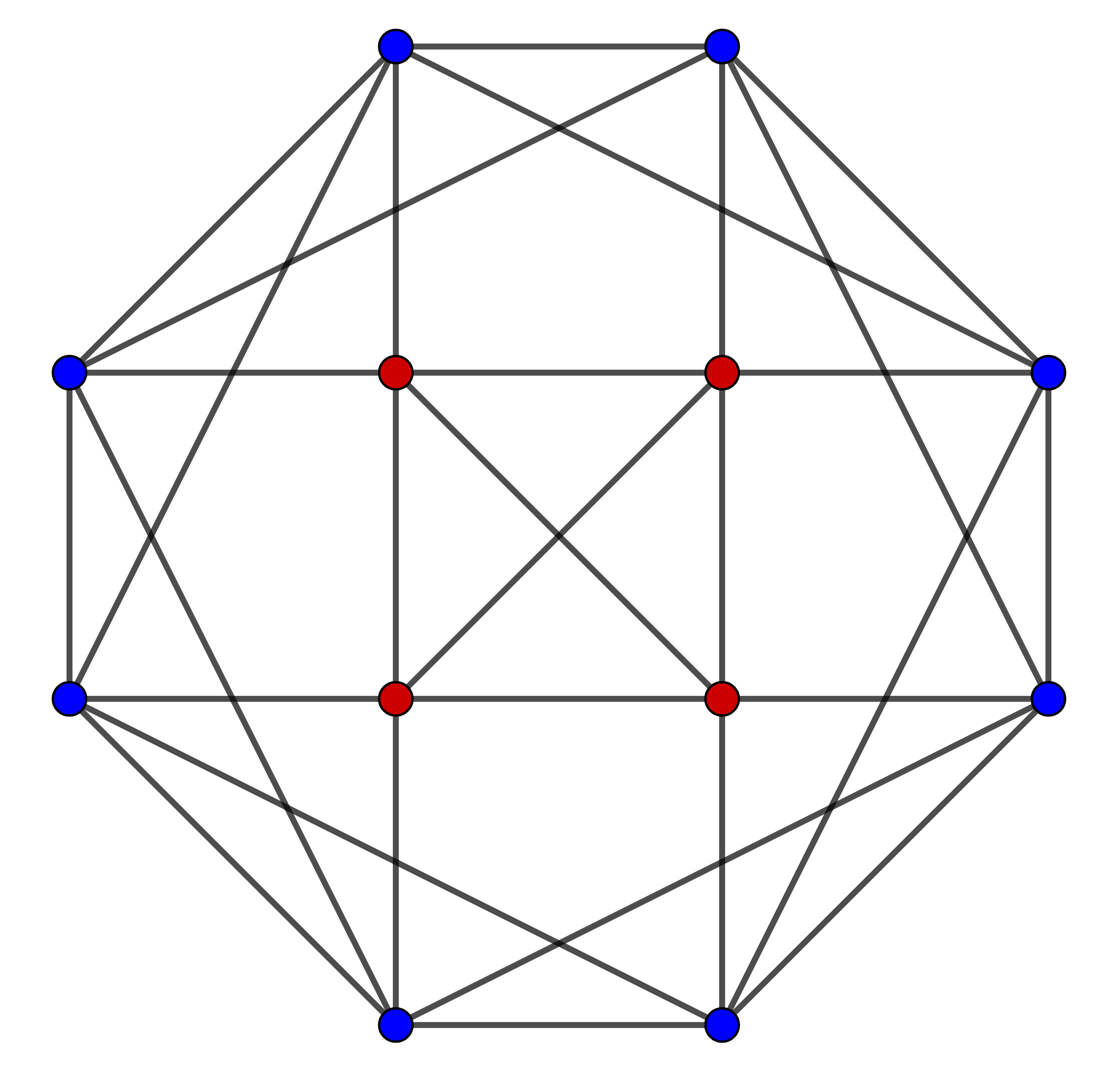}
    \]
    \caption{These graphs and their edge-complements are the only walk-regular, non-vertex-transitive graphs on $12$ vertices. Red vertices belong to one orbit and blue vertices belong to the other.} \label{fig: minimal example}
\end{figure}

\begin{figure}
    \includegraphics[width=0.45\textwidth]{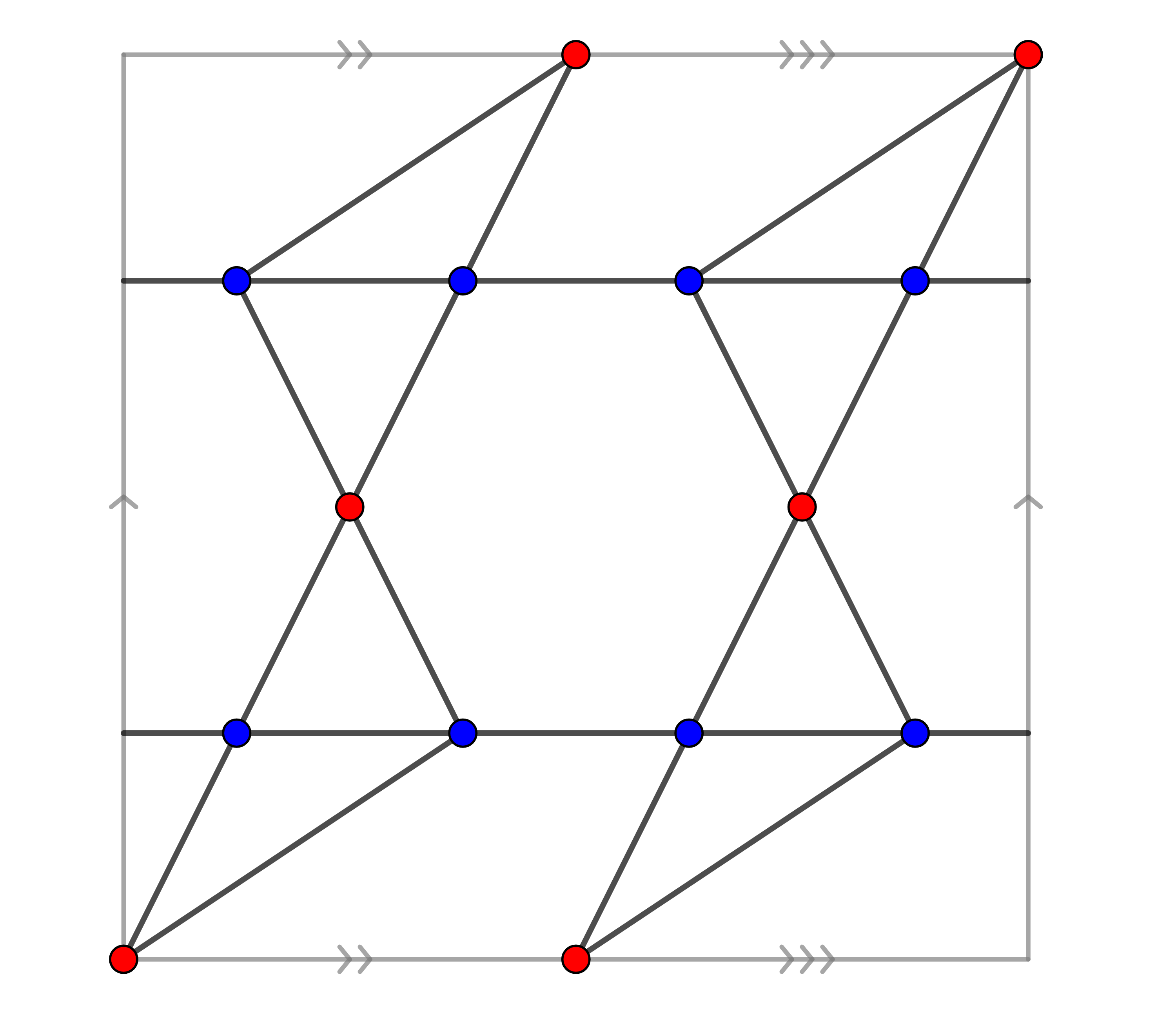}
    \caption{A walk-regular non-vertex-transitive toroidal graph.}
    \label{fig: torus}
\end{figure}

\section{Walk-regular planar graphs}\label{sec planar}

\subsection{Auxiliary lemmas}

Here we set up the tools we will need in order to classify 3-connected walk-regular planar graphs and prove Theorem \ref{main 3}.


The first tool shall be used to obtain the number of short closed walks through a vertex from the number of cycles through it. Let $G$ be a $d$-regular graph with $d \geq 3$. A closed walk is said to be \textit{contractible} if it traces no cycles. For a vertex $v$ and $m \geq 3$, we let
\[\mathcal C_m(v) = \#\{\text{$m$-cycles through $v$}\}.\] 
As every closed walk of length 3 traces a triangle in one of two directions, 
\[
(A^3)_{v,v} = 2 \mathcal C_3(v).
\]
Starting at $v$ there are exactly $d(d-1)+d^2$ contractible closed walks with length 4. Adding in the ones tracing 4-cycles, we have
\[
(A^4)_{v,v} = 2d^2 - d + 2\mathcal C_4(v).
\]
Now we assume that $\mathcal C_3(v_i) \equiv \mathcal C_3$ for all vertices $v_i$. Note that every closed walk $\overline{va_1a_2a_3a_4v}$ traces either a 5-cycle or an edge attached to a triangle; among the latter ones, there are exactly $2d\mathcal C_3$ with $v=a_2$, $2d\mathcal C_3$ with $v=a_3$, $2(d-2)\mathcal C_3$ with $a_1 = a_4$ and $v \notin \{a_2,a_3\}$, $2(d-2)\mathcal C_3$ with $a_1 = a_3$ and $a_2 \notin \{v,a_3\}$, $2(d-2)\mathcal C_3$ with $a_2 = a_4$ and $a_3 \notin\{v,a_1\}$, and $2\mathcal C_3$ with $a_1 = a_3$ and $a_2 = a_4$. In total we have
\[
(A^5)_{v,v} = 10(d-1)\mathcal C_3 + 2 \mathcal C_5(v).
\]
Furthermore, assume that $\mathcal C_4(v_i) \equiv \mathcal C_4$ for all vertices $v_i$. If $G$ is triangle-free (i.e., $\mathcal C_3 = 0$), similarly, we count the closed walks starting at $v$ with length 6: $d^3+2d^2(d-1)+2d(d-1)^2$ of them are contractible, $2 \cdot 2d\mathcal C_4 + 4 \cdot 2(d-2)\mathcal C_4 + 2\cdot 2\mathcal C_4$ of them trace some 4-cycles with attached edges, and the remaining $2\mathcal C_6(v)$ trace some 6-cycles. Therefore,
\[
(A^6)_{v,v} = 5d^3-6d^2+2d+12(d-1)\mathcal C_4 + 2\mathcal C_6(v).
\]
If $\mathcal C_3 = 1$, compared to the triangle-free case, there are two fewer contractible closed walks of length 6 but four more such walks that trace the triangle through $v$ twice. In this case, it follows that
\[
(A^6)_{v,v} = 5d^3-6d^2+2d+12(d-1)\mathcal C_4 + 2\mathcal C_6(v) + 2.
\]
A direct consequence of the above formulas is that

\begin{lemma} \label{th:cycles}
    Let $G$ be a walk-regular graph. Then
    \begin{enumerate}
        \item $\mathcal C_3(v) \equiv \mathcal C_3, \, \mathcal C_4(v) \equiv \mathcal C_4$, and $\mathcal C_5(v) \equiv \mathcal C_5$ for all vertices $v_i$;
        \item if $\mathcal C_3 \leq 1$, $\mathcal C_6(v)$ is also constant over all vertices $v_i$.
    \end{enumerate}
\end{lemma}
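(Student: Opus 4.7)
The plan is to read the lemma off directly from the closed-walk counting identities derived in the paragraph immediately preceding the statement. The hypothesis that $G$ is walk-regular means that for every $k \geq 0$ the diagonal entry $(A^k)_{v,v}$ does not depend on $v$; in particular the degree $d = (A^2)_{v,v}$ is a graph invariant. Thus each of the identities
\[
    (A^3)_{v,v} = 2\mathcal C_3(v), \quad (A^4)_{v,v} = 2d^2 - d + 2\mathcal C_4(v), \quad (A^5)_{v,v} = 10(d-1)\mathcal C_3(v) + 2\mathcal C_5(v)
\]
expresses $(A^k)_{v,v}$ as an affine function of $\mathcal C_k(v)$ whose remaining ingredients are either the constant $d$ or cycle counts of strictly smaller length. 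Inverting these identities in the order $k = 3, 4, 5$ yields part (1) by a one-line induction: the $k=3$ identity forces $\mathcal C_3(v)$ to be constant, then the $k=4$ identity forces $\mathcal C_4(v)$ to be constant, and finally the $k=5$ identity, combined with the already-established constancy of $\mathcal C_3$, forces $\mathcal C_5(v)$ to be constant.

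For part (2) I would invoke the two case-dependent formulas for $(A^6)_{v,v}$. If $\mathcal C_3 = 0$, the identity
\[
    (A^6)_{v,v} = 5d^3 - 6d^2 + 2d + 12(d-1)\mathcal C_4 + 2\mathcal C_6(v)
\]
shows $\mathcal C_6(v)$ is determined by vertex-constant quantities, hence is itself constant. If $\mathcal C_3 = 1$, the same formula holds with an extra additive constant $2$ on the right, and the same inversion succeeds. Thus $\mathcal C_6(v)$ is constant in either case.

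The genuinely delicate step is not the inversion itself but rather the derivation of those closed-walk formulas, which has already been carried out above the lemma statement. The reason part (2) carries the restriction $\mathcal C_3 \leq 1$ is precisely that the count of contractible closed $6$-walks based at $v$ that traverse two (possibly distinct) triangles through $v$ depends on how the triangles at $v$ overlap, and this combinatorial data is not \emph{a priori} a function of $v$ alone. Once $\mathcal C_3 \in \{0,1\}$ is imposed, this ambiguity disappears and only the listed configurations contribute, so the remainder of the proof is the mechanical inversion described above.
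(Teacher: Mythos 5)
Your proposal is correct and matches the paper's argument exactly: the paper also derives the lemma as a ``direct consequence'' of the closed-walk identities for $(A^k)_{v,v}$, $k=3,\dots,6$, inverting them in order of increasing $k$ and using that walk-regularity makes each diagonal entry (and hence the degree $d$) independent of $v$. Your closing remark about why the hypothesis $\mathcal C_3 \leq 1$ is needed in part (2) is also the right explanation and consistent with the paper's derivation of the two $(A^6)_{v,v}$ formulas.
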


\medskip

Hereafter $G$ will be a 3-connected walk-regular planar graph with degree $d$ on $n$ vertices and $e$ edges. By Whitney's theorem, $G$ has an essentially unique planar embedding, where each vertex $v$ is surrounded clockwise or counterclockwise by $d$ faces $f_i(v), \, 1\leq i \leq d$. We label $v$ by its \emph{modified Schl\"afli symbol}
\[r(v) = (r_1(v),r_2(v),\ldots,r_d(v)),\]
where $r_i(v)$ is the number of edges of $f_i(v)$. The first face $f_1(v)$, as well as the labeling direction, are chosen to minimize the label $r(v)$ lexicographically so that it is uniquely defined. In particular, $3 \leq r_1(v) = \min_{1\leq i \leq d} r_i(v)$. With such a label, we count the total number $f$ of faces in the graph by
\[f = \sum_v p(v), \qquad \text{where} \ \ p(v):= \sum_{i=1}^d \frac{1}{r_i(v)}.\]
Using Euler's identity $n+f-e = 2$ and $e = nd/2$, we get
\begin{equation} \label{eq:3.1}
\frac 1n \sum_v p(v) = \frac{d-2}{2}+\frac{2}{n}.
\end{equation}

In order to prove Theorem \ref{main 3}, we shall prove the following key lemma.
\begin{lemma}\label{S symbol lemma}
    For any 3-connected walk-regular planar graph with degree $d \geq 3$, the modified Schl\"afli symbol is constant over all vertices, that is
    \begin{equation} \label{eq: constant S symbol}
    r(v) \equiv (r_1,r_2,\cdots,r_d) \qquad \textit{for all vertices}~v.
    \end{equation}
\end{lemma}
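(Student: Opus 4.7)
The plan is to use walk-regularity to count short cycles through each vertex, translate those counts into counts of small faces via the rigidity of the 3-connected planar embedding, and then pin down the full cyclic modified Schl\"afli symbol from this face data. First I would invoke Lemma~\ref{th:cycles} to conclude that $\mathcal C_3(v)$, $\mathcal C_4(v)$, $\mathcal C_5(v)$ are constant in $v$, and that $\mathcal C_6(v)$ is constant whenever $\mathcal C_3 \leq 1$. By Whitney's theorem the embedding is essentially unique, so with $\mathcal F_k(v)$ denoting the number of $k$-faces incident to $v$ one has $\mathcal F_k(v) \leq \mathcal C_k(v)$, and the task reduces to extracting constancy of $\mathcal F_k(v)$ from constancy of $\mathcal C_k(v)$ for the relevant values of $k$.

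The key geometric input is a Tutte-type characterization of facial cycles in 3-connected planar graphs: a cycle bounds a face of the embedding if and only if it is induced and non-separating. I would argue that, in the present setting, every chordless cycle of length at most roughly the girth $g$ plus a small constant must be non-separating, hence a face. Since every $g$-cycle is automatically chordless (any chord would create a shorter cycle), this immediately yields $\mathcal F_g(v) = \mathcal C_g(v)$; cycles of length $g+1$ or $g+2$ that do carry chords can be counted in terms of strictly shorter cycles through $v$, which are already known to be constant. Iterating through the cycle lengths controlled by Lemma~\ref{th:cycles} produces constancy of $\mathcal F_k(v)$ for each relevant $k$, which in turn forces the multiset $\{r_1(v), \ldots, r_d(v)\}$ of face sizes at $v$ to be independent of $v$.

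To pass from constancy of the multiset to constancy of the full cyclic modified Schl\"afli symbol, I would combine Euler's identity \eqref{eq:3.1} with a finite case analysis on $d$ and the admissible multisets. Because \eqref{eq:3.1} is very restrictive once $d$ and the multiset are fixed, in most cases only a single cyclic arrangement around $v$ is even feasible; any residual ambiguity should be eliminated by a more refined local count of closed walks that are forced to traverse two specified adjacent faces at $v$, again exploiting walk-regularity.

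The main obstacle I expect is the step asserting that every short chordless cycle is a face, i.e., ruling out separating short chordless cycles. In a general 3-connected planar graph, separating chordless cycles of length equal to the girth certainly can exist, and it is only the strong uniformity imposed by walk-regularity—probably through a tight Euler-type inequality that is saturated exactly at face cycles—that excludes them here. A secondary difficulty is the case $\mathcal C_3 \geq 2$, for which Lemma~\ref{th:cycles} does not deliver constancy of $\mathcal C_6(v)$; here a separate direct analysis of the local structure around each vertex rich in triangles will be required.
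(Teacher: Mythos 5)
Your architecture matches the paper's: Lemma~\ref{th:cycles} for constancy of short cycle counts, then a ``short chordless cycles are faces'' principle to convert cycle counts into face counts, then Euler's identity \eqref{eq:3.1} and a finite case analysis to pin down $r(v)$. But the step you flag as ``the main obstacle'' is in fact where essentially all of the work lies, and your proposed resolution does not survive contact with the examples. The claim that every chordless cycle of length up to girth-plus-a-constant is non-separating is simply false: the octahedron contains a chordless separating $4$-cycle, and the icosahedron and the $4$-gonal antiprism contain chordless separating $5$-cycles. The correct statements (Lemmas~\ref{th:4-face} and~\ref{th:5-face}) must exclude exactly these graphs, and proving them is not a consequence of any ``tight Euler-type inequality''; the paper's proofs proceed by taking a minimal non-facial chordless cycle, enumerating its possible inner degree sequences $d_*(C)$, and killing each one by hand using $3$-connectedness together with the constancy of $\mathcal C_3$, $\mathcal C_4$, $\mathcal C_5$ at specific vertices inside and outside $C$. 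Without some version of that case analysis your argument has no content at its core.

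Two further gaps. First, Lemma~\ref{th:cycles} only controls cycle counts up to length $6$ (and length $6$ only when $\mathcal C_3\leq 1$), so ``iterating through the cycle lengths controlled by Lemma~\ref{th:cycles}'' cannot reach the graphs with faces of length $8$ or $10$ (the $(3,8,8)$-, $(3,10,10)$-, $(4,6,8)$-, and $(4,6,10)$-solids); the paper must separately establish identities such as $(A^8)_{v,v}=591+2\mathcal C_8(v)$ by explicit neighborhood counts, and you would need the analogous computations. Second, constancy of the multiset of face lengths at each vertex does not determine the cyclic symbol: in the paper's Case (13) the labels $(3,4,4,5)$ and $(3,4,5,4)$ have the same multiset, and ruling out the former (which would produce the non-walk-regular twisted solid) requires a genuinely new count of $5$- and $6$-cycles through adjacent vertices. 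You gesture at ``a more refined local count,'' which is the right instinct, but as written the proposal defers every step that actually requires an idea.
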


However, establishing \eqref{eq: constant S symbol} succinctly appears challenging. Instead, we will justify \eqref{eq: constant S symbol} in each individual case.

{We remark that, once we prove Lemma \ref{S symbol lemma}, it is routine to check that Theorem \ref{main 3} holds in each case. Indeed, the numbers of vertices, edges, and faces are determined by Lemma \ref{S symbol lemma} and Euler's identity. The graph can then be reconstructed in a deterministic fashion, starting with an arbitrary polygon as the outer face. We illustrate this fact for the most complicated case when $r(v)\equiv(3,4,5,4)$ in Figure \ref{fig: reconstruction of the $(3,4,5,4)$-solid.}. The only exception is the case when $r(v)\equiv(3,4,4,4)$, for which one runs into two possible reconstructions, the polyhedral graphs of the $(3,4,4,4)$-solid and twisted $(3,4,4,4)$-solid. However, the twisted one is not walk-regular. See Case (13) in the proof.}
\begin{figure}
\includegraphics[width=1\textwidth]{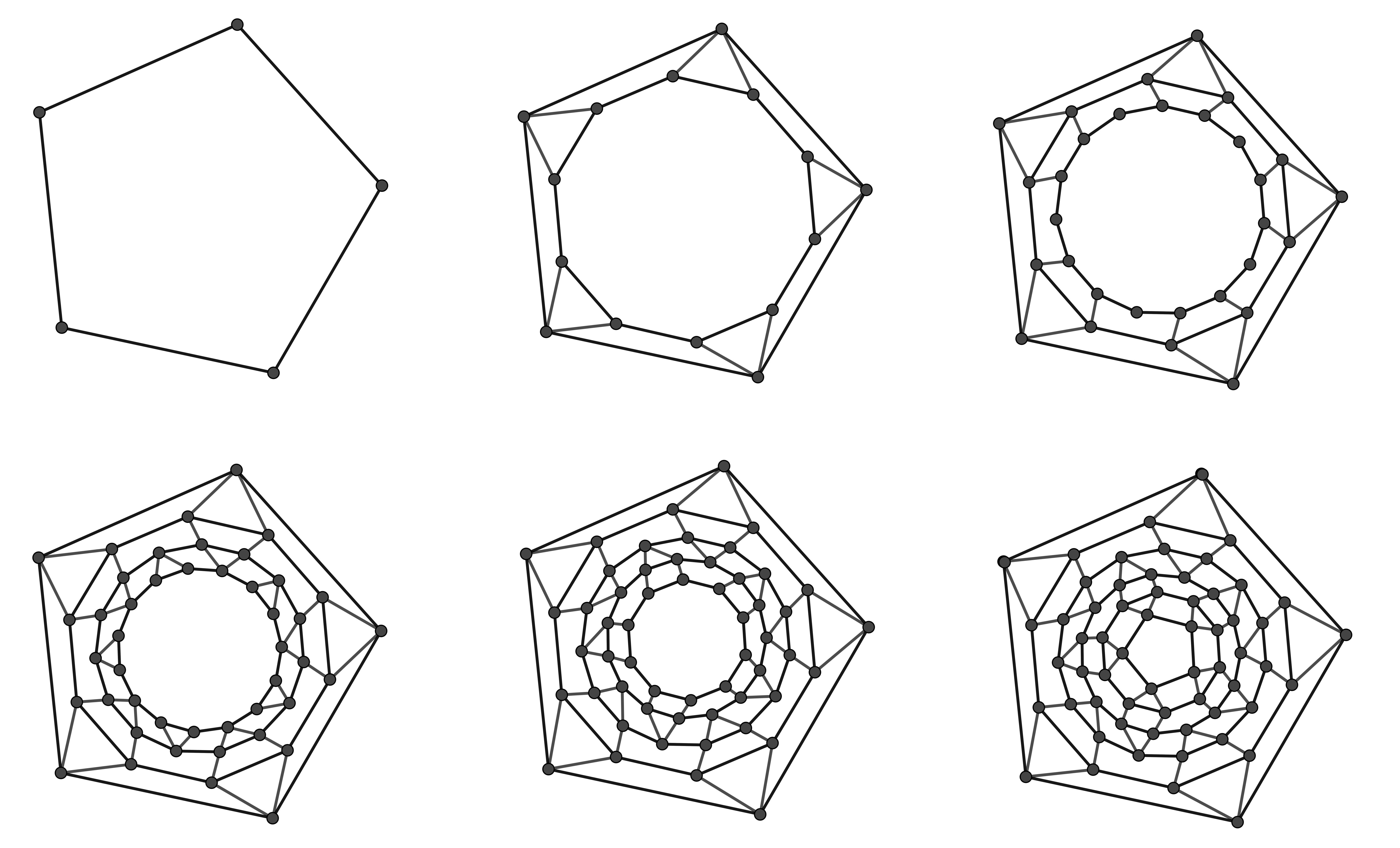}
\caption{Step by step reconstruction of the $(3,4,5,4)$-solid.}
\label{fig: reconstruction of the $(3,4,5,4)$-solid.}
\end{figure}
	
We fix a vertex $v_1$ such that $p(v_1) = \max_v p(v)$. Let 
\[r_1 = r_1(v_1) \quad \text{and} \quad k = \#\{1 \leq i \leq d: r_i(v_1) = r_1(v_1)\}.\]
In view of \eqref{eq:3.1}, to prove Lemma \ref{S symbol lemma}, we only need to discuss the fifteen cases enumerated below. For the convenience of the reader, we have also included an exhaustive list of the possible walk-regular planar graphs belonging to each case. From now on and throughout the paper, for the purpose of simplicity, we shall not distinguish between a solid and its polyhedral graph.
\begin{center}
\begin{table}
    \begin{tabular}{r|l|l}
            Case & $d, r_1, k$ & Solids  \\ \hline
            (1) & 3, 3, 3 & Tetrahedron \\
            (2) & 3, 3, 2 &  \\
            (3) & 3, 3, 1 & Triangular prism, $(3,6,6)$-, $(3, 8,8)$-, and $(3,10,10)$-solids \\
            (4) & 3, 4, 3 & Hexahedron \\
            (5) & 3, 4, 2 & $m$-gonal prism, $m \geq 5$ \\
            (6) & 3, 4, 1 & $(4,6,6)$-, $(4,6,8)$-, and $(4, 6, 10)$-solids \\
            (7) & 3, 5, 3 & Dodecahedron \\
            (8) & 3, 5, 2 &  \\
            (9) & 3, 5, 1 & $(5, 6, 6)$-solid \\
            (10) & 4, 3, 4 & Octahedron \\
            (11) & 4, 3, 3 & $m$-gonal antiprism, $m \geq 4$ \\
            (12) & 4, 3, 2 & $(3,4, 3, 4)$- and $(3, 5, 3, 5)$-solids \\
            (13) & 4, 3, 1 & $(3, 4, 4, 4)$- and $(3, 4, 5, 4)$-solids \\
            (14) & 5, 3, 5 & Icosahedron \\
            (15) & 5, 3, 4 & $(3, 3, 3, 3, 4)$- and $(3, 3, 3, 3, 5)$-solids. 
    \end{tabular}
    \caption{15 possible cases.}\label{table 1}
    \end{table}
\end{center}

\medskip

To effectively use Lemma \ref{th:cycles} in discussions, we need another tool to convert the number of short cycles into the one of faces. We label an $m$-cycle $C=\overline{a_1a_2\cdots a_ma_1}$ by its {\it inner degree sequence}
\[d_*(C) = (d_*(a_1),d_*(a_2),\cdots, d_*(a_m)),\] 
where $d_*(a_i)$ is the number of incident edges of $a_i$ inside $C$; here an object (vertex, edge, cycle, face, etc) is said to be ``inside $C$" if it lies in the region bounded by $C$ in the planar embedding. Again, $d_*(C)$ is uniquely defined, once we choose $a_1$ and the direction to minimize the sequence lexicographically. Clearly, $C$ is a face if and only if $d_*(C) = (0,0,\cdots,0)$ or $(d-2,d-2,\cdots,d-2)$ (the latter means that $C$ is the unique outer face of the embedding). For $m=3$, we have

\begin{lemma} \label{th:3-face}
    In a 3-connected walk-regular planar graph, every triangle is a face.
\end{lemma}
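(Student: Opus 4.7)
The approach is by contradiction. Suppose $T = \{a_1, a_2, a_3\}$ is a triangle in $G$ that does not bound a face. By Whitney's theorem the planar embedding is essentially unique, so the cycle $T$ partitions the plane into two regions $I$ and $O$, both of which must contain vertices of $G$ since $T$ is not a face. Write $d_*(a_i)$ for the number of edges of $a_i$ entering $I$, so $d_*(a_i) + (d-2-d_*(a_i)) = d - 2$ accounts for all non-triangle edges at $a_i$.

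\textbf{3-connectivity.} If some $a_i$ had no neighbor in $I$, then the remaining two vertices of $T$ would form a 2-vertex cut separating $I$ from the rest, contradicting 3-connectivity. Hence $d_*(a_i) \geq 1$ for each $i$, and symmetrically each $a_i$ has at least one neighbor in $O$. Since $a_i$ has only $d-2$ non-triangle edges to split, we must have $d \geq 4$. This immediately settles $d = 3$.

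\textbf{The case $d = 4$.} Each $a_i$ has exactly one neighbor on each side, so the total number of $T$-to-$I$ edges is $e_{IT} := \sum_i d_*(a_i) = 3$. Summing degrees of $I$-vertices, and using that no $I$-vertex has a neighbor in $O$, yields $4|I| = 2 e_I + e_{IT} = 2 e_I + 3$, which is impossible modulo $2$.

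\textbf{The case $d = 5$.} Since regular planar graphs satisfy $d \leq 5$ by Euler, this is the remaining case. Planarity of the induced subgraph $G_I = G[I \cup T]$ gives $e_I + e_{IT} \leq 3|I|$, and combined with the degree equation $5|I| = 2 e_I + e_{IT}$ yields $e_{IT} \leq |I|$; together with the 3-connectivity bound $e_{IT} \in \{3,4,5,6\}$ and integrality of $e_I$, this restricts the admissible pairs $(|I|, e_{IT})$ to a short list (with symmetric constraints on $O$). Walk-regularity enters via Lemma \ref{th:cycles}: the count $\mathcal{C}_3(a_1)$ includes the triangle $T$ together with triangles lying in each of $G_I$ and $G_O$, whereas $\mathcal{C}_3(v)$ for a $T$-nonadjacent vertex $v \in I$ (which exists whenever $|I| > e_{IT}$) counts only triangles wholly inside $I$. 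Equating these and combining with the planar exclusion of a $K_5$-subgraph in $G_I$ (ruling out small $|I|$ with large $e_I$) should eliminate each admissible configuration.

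The main obstacle is the $d = 5$ case: parity is no longer available, so one must interweave the planar bound $e_{IT} \leq |I|$, the walk-regular triangle counts from Lemma \ref{th:cycles}, and the 3-connectivity structure to rule out every remaining $(|I|, e_{IT})$. The cases $d = 3$ and $d = 4$, by contrast, fall out cleanly from 3-connectivity alone and a parity check, respectively.
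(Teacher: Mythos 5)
Your $d=3$ and $d=4$ arguments are correct and coincide with the paper's: 3-connectivity forces every vertex of a non-facial triangle to have a neighbor on each side, which is impossible for $d=3$, and for $d=4$ forces $d_*(C)=(1,1,1)$, which the parity of the degree sum inside $C$ rules out. The problem is that the entire content of the lemma is concentrated in the $d=5$ case, and there your argument is a sketch that does not close.

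Concretely, the counting framework you set up does not restrict to ``a short list'' of configurations: the inequalities $e_{IT}\le |I|$ and $5|I|=2e_I+e_{IT}$ with $e_{IT}\in\{3,\dots,6\}$ give a \emph{lower} bound on $|I|$ but no upper bound, so there are infinitely many admissible pairs $(|I|,e_{IT})$ and nothing to enumerate. Equating $\mathcal C_3(a_1)$ with $\mathcal C_3(v)$ for an interior vertex $v$ also yields no contradiction by itself, since a deep interior vertex can lie on many triangles. The inputs that actually make the $d=5$ case work, and which are absent from your proposal, are: (i) the Schl\"afli-symbol analysis via \eqref{eq:3.1}, which shows that in the $5$-regular cases at least four of the five faces around the extremal vertex $v_1$ are triangles, whence by Lemma \ref{th:cycles} every vertex satisfies $\mathcal C_3(v)\ge 4$ and is incident to at most one face of length $\ge 4$; (ii) a minimality assumption on the offending triangle, so that every triangle properly inside it is a face and no interior vertex is adjacent to all three of $a_1,a_2,a_3$; and (iii) a local case analysis on $d_*(C)\in\{(1,1,1),(1,1,2),(1,2,2),(2,2,2)\}$, in which one draws the forced neighborhoods and derives, in each case, either a $2$-cut or a vertex with the wrong number of incident triangles. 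Without these structural steps the $d=5$ case remains open, so the proof as written is incomplete.
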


Note that a cycle $C$ in $G$ with length $\geq 4$ is not necessarily a face. The typical situation is that $C$ has a {\it chord}, an edge not in $C$ whose endpoints lie in $C$. For instance, the (3,3,3,3,4)-solid contains a 4-cycle with one chord, a 5-cycle with one chord and a 5-cycle with two chords; none of them are faces. We also observe that the 4-cycle encircling four triangles in the octahedron, the 5-cycle encircling five triangles in the icosahedron, and the 5-cycle encircling three triangles and a quadrangular face in the 4-gonal antiprism are not faces, even though all of them are {\it chordless}. Indeed, by excluding these cases, we have the next two lemmas for 4- and 5-cycles.

\begin{lemma} \label{th:4-face}
    In a 3-connected walk-regular planar graph which is not isomorphic to the octahedron, every chordless 4-cycle is a face.
\end{lemma}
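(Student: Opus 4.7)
The plan is to argue by contradiction: assume $C=\overline{a_1a_2a_3a_4a_1}$ is a chordless 4-cycle in $G$ that is not a face, and show this forces $G\cong$ octahedron. Since $C$ is chordless and non-facial, the planar embedding places vertices on both sides of $C$: let $W$ and $W'$ denote the interior and exterior vertex sets, both nonempty (otherwise one region would be bounded by $C$ as a face). For each $a_i$, let $p_i$ (resp.\ $q_i$) be the number of edges at $a_i$ going into $W$ (resp.\ $W'$), so $p_i+q_i=d-2$.

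First, I will use 3-connectivity alone. Since no edge of $G$ crosses $C$, the set $W$ connects to the rest of $G$ only through $\{a_i:p_i\geq 1\}$, so by 3-connectivity this set has size at least $3$. Hence $\sum_i p_i\geq 3$, and symmetrically $\sum_i q_i\geq 3$, giving $4(d-2)\geq 6$ and so $d\geq 4$. This rules out $d=3$ immediately. When $d=4$, the identity $2E_W=4|W|-\sum_i p_i$ forces $\sum_i p_i$ to be even, pinning $\sum_i p_i=\sum_i q_i=4$; a short further $2$-cut check leaves only the patterns $(1,1,1,1)$ and $(2,1,1,0)$ (up to permutation) for $(p_1,\ldots,p_4)$.

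Second, I will use walk-regularity to identify $G$. For each interior vertex $w\in W$ and each $a_i\in C$, the cycle $C$ itself and every $4$-cycle crossing $W'$ contribute to $\mathcal C_4(a_i)$ but not to $\mathcal C_4(w)$, while $4$-cycles lying deep inside $W$ contribute to $\mathcal C_4(w)$ but not to $\mathcal C_4(a_i)$. Enforcing $\mathcal C_4(a_i)=\mathcal C_4(w)$, supplemented by the constancy of $\mathcal C_3$ and Lemma \ref{th:3-face} (triangles are faces), should eliminate the asymmetric $(2,1,1,0)$ configuration as well as every $d\geq 5$. In the remaining symmetric case $p_i=q_i=1$, Euler's formula for the planar subgraph on $W\cup C$ gives $E_W=2|W|-2$, and the simple-graph bound $E_W\leq\binom{|W|}{2}$ forces $|W|\in\{1\}\cup\{4,5,\ldots\}$. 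For $|W|\geq 4$ the interior is dense enough (e.g.\ $|W|=4$ gives $W=K_4$) to produce 4-cycles at each interior $w$ that have no counterpart through the $a_i$, again violating constancy of $\mathcal C_4$. Hence $|W|=1$, the unique interior vertex is adjacent to all four $a_i$, and symmetrically $|W'|=1$; the resulting graph on $6$ vertices is the octahedron.

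The main obstacle is the walk-regularity bookkeeping in the second step: both sides of $C$ carry nontrivial structure, so $\mathcal C_4(v)$ for $v\in C$ and for $v\in W\cup W'$ receive interlocking contributions from both $W$ and $W'$. Rigorously executing the case analysis to eliminate the asymmetric $(2,1,1,0)$ configuration, all $d\geq 5$, and all cases with $|W|\geq 4$ is the most delicate part of the argument, and Lemma \ref{th:3-face}, by letting us treat every triangle as a face, is what makes those $\mathcal C_3$ and $\mathcal C_4$ counts tractable.
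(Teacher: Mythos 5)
Your opening moves are sound and match the paper's: 3-connectivity forces at least three of the $a_i$ to send edges into each side of $C$, the handshake parity pins $\sum_i p_i=\sum_i q_i=4$ when $d=4$ (ruling out $d=3$ entirely, which is how the paper disposes of the cubic case), and a 2-cut eliminates the arrangement in which the inner-degree-$0$ and inner-degree-$2$ vertices sit opposite each other. Your treatment of the fully symmetric situation where each side of $C$ contains a single common neighbor, yielding the octahedron, is also fine, and the count $E_W=2|W|-2$ with the $K_4$ exclusion via Lemma \ref{th:3-face} is a nice touch for $|W|\le 4$.

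However, there is a genuine gap: everything you label as ``should eliminate'' or ``is the most delicate part'' is precisely the content of the lemma, and your sketch gives no mechanism for carrying it out. Concretely: (i) the adjacent $(0,1,1,2)$ configuration is not killed by any single clean count; the paper needs to first pin $\mathcal C_3=1$ and $\mathcal C_4\ge 2$ via \eqref{eq:3.1}, introduce a \emph{minimality} assumption on $C$ (so that every chordless 4-cycle strictly inside $C$ is already a face --- a device absent from your proposal, without which you cannot control $\mathcal C_4$ of interior vertices at all), and then chase faces around the inner neighbors $b_1,b_2,b_3$ to reach a contradiction; (ii) in the $(1,1,1,1)$ case your claim that $|W|\ge 5$ is ``dense enough'' to force extra 4-cycles at interior vertices is false as stated --- $E_W=2|W|-2$ is far below the planar bound $3|W|-6$, and no density argument produces a 4-cycle \emph{through every} interior vertex; the paper instead argues locally with $b_1,b_2,b_3$, $\mathcal C_4\le 3$, and Lemma \ref{th:3-face}, never bounding $|W|$; (iii) $d=5$ cannot be ``eliminated by walk-regularity'' wholesale, since 5-regular walk-regular planar graphs exist (icosahedron, $(3,3,3,3,4)$- and $(3,3,3,3,5)$-solids) --- one must show a non-facial chordless 4-cycle is impossible there, which the paper does by a separate case analysis on $d_*(a_1)\in\{0,1,2,3\}$ using $\mathcal C_3\in\{4,5\}$ and again minimality of $C$. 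As written, the proposal proves the lemma only in the degenerate subcases and defers the substantive ones.
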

\begin{lemma} \label{th:5-face}
    In a 3-connected walk-regular planar graph which is not isomorphic to the 4-gonal antiprism or the icosahedron, every chordless 5-cycle is a face.
\end{lemma}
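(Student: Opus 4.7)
I argue by contradiction. Suppose $G$ is a 3-connected walk-regular planar graph not isomorphic to the 4-gonal antiprism or the icosahedron, and let $C = \overline{v_1 v_2 v_3 v_4 v_5 v_1}$ be a chordless 5-cycle of $G$ bounding a closed disk $D$ in the planar embedding, with $C$ not a face. Because $C$ is chordless, each edge from some $v_i$ into $D$ terminates at a genuine interior vertex; because $G$ is 3-connected, there must be at least three such edges, for otherwise their at most two endpoints on $C$ would form a cut set separating $D$ from the rest of $G$. By Lemma \ref{th:3-face} every triangle of $G$ is a face, and by Lemma \ref{th:4-face} every chordless 4-cycle is a face (the octahedron exception is inoperative, since a quick check in $K_{2,2,2}$ shows the octahedron has no chordless 5-cycle at all). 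Consequently every triangular or chordless-quadrilateral region contained in $D$ must coincide with an interior face of $G$.

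Next I carry out a case analysis on the inner degree sequence $d_*(C) = (d_*(v_1), \ldots, d_*(v_5))$ (taken up to the dihedral symmetry of $C$), recording for each candidate the induced local face data at the vertices of $C$ and at each interior vertex. Since $d_*(v_i) \leq d-2$ and the interior of $D$ can only be tiled by faces of $G$, the triangle/4-cycle-face constraints above, together with planarity and 3-connectedness, leave only finitely many candidate tilings. For each of them I invoke walk-regularity: by Lemma \ref{th:cycles}(1), the counts $\mathcal C_3, \mathcal C_4, \mathcal C_5$ are constant across vertices, and since all triangles and chordless 4-cycles are faces, $\mathcal C_3$ and $\mathcal C_4$ reduce to local face incidences. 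So in most candidate tilings one can exhibit a vertex of $D$ whose local count of 3-, 4-, or 5-cycles disagrees with that at a vertex of $G$ sitting far from $D$, yielding a contradiction.

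The only tilings escaping this local/global comparison are the following. (a) $d = 5$, with the interior of $D$ a single vertex joined by edges to all five $v_i$, forming five triangles meeting at a common apex; propagating this configuration using $d$-regularity and planarity reconstructs $G$ as the icosahedron. (b) $d = 4$, with the interior of $D$ consisting of one interior edge flanked by three triangles and one quadrilateral; propagating likewise reconstructs $G$ as the 4-gonal antiprism. Both conclusions contradict the standing hypothesis, completing the proof.

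The main obstacle is the case analysis itself: enumerating the planar fillings of the pentagonal disk $D$ subject to the triangle/4-cycle-face constraints, and then ruling out each non-exceptional filling via a short-cycle-counting argument that compares a vertex interior or adjacent to $D$ against a reference vertex far from $D$. Planarity keeps the enumeration finite, but carefully executing each local-versus-global cycle comparison, and then handling the global reconstruction step in the surviving cases (a) and (b), is where the real work lies.
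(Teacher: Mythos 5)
Your high-level strategy is the same as the paper's: reduce to the inner degree sequence $d_*(C)$ via 3-connectedness, use Lemmas \ref{th:3-face} and \ref{th:4-face} to pin down the faces inside the disk, and derive contradictions from the constancy of $\mathcal C_3,\mathcal C_4,\mathcal C_5$ (Lemma \ref{th:cycles}). However, as written this is a plan rather than a proof, and two of the load-bearing claims do not hold up. First, the assertion that ``planarity keeps the enumeration finite'' is unjustified: Lemmas \ref{th:3-face} and \ref{th:4-face} only constrain triangles and chordless 4-cycles, so nothing a priori prevents the interior of $D$ from containing faces of length $\geq 5$ and arbitrarily many interior vertices. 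The list of possible sequences $d_*(C)$ is finite, but the list of ``candidate tilings'' of the disk is not, and your argument collapses the two. The paper escapes this by splitting according to the degree $d$ and the number $k$ of triangles per vertex, where extra structure becomes available: for $d=3$ the lemma is immediate (only $5(d-2)=5$ off-cycle edges are available, which cannot be split $\geq 3$ inside and $\geq 3$ outside); for $d=4$, case (12), every vertex lies on exactly two triangular faces, which is what actually bounds the interior configurations; and a \emph{minimality} assumption on $C$ (no smaller offending cycle inside it) is used repeatedly to force interior 4-cycles to be faces. Your sketch uses none of this.

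Second, the device ``compare a vertex near $D$ against a vertex far from $D$'' is not available in general — the graph may have no vertex far from $D$ — and it is not how the contradictions actually arise. In every sub-case the paper compares two \emph{specific nearby} vertices whose local neighborhoods are forced by the configuration (e.g.\ $\mathcal C_4(a_4)\geq 2>\mathcal C_4(a_5)$ in the sub-case $d_*(C)=(0,0,1,1,2)$, or $\mathcal C_6(b_1)<\mathcal C_6(a_2)$ in the $r_1=5$ case). Exhibiting such a pair is precisely the content of each sub-case, and you have deferred all of it. Finally, your exceptional configuration (b) is misdescribed: the non-face chordless 5-cycle in the 4-gonal antiprism encloses a single interior vertex of degree 4 adjacent to four of the five cycle vertices (giving three triangles and one quadrilateral), not ``one interior edge''; and the reconstruction claims in (a) and (b) are asserted without argument. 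The octahedron remark (no chordless 5-cycles in $K_{2,2,2}$) is correct, but it is the only step you actually verify.
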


Next, we will provide the proofs of Lemmas \ref{th:3-face}--\ref{th:5-face} separately in the cases $d=3,4,5$, and conclude the proof of Lemma \ref{S symbol lemma}, and thus Theorem \ref{main 3} for each case.

\subsection{The cubic cases} Assume that $G$ is cubic (i.e., $3$-regular). When a chordless $m$-cycle $C$ in $G$ is not a face, the $3$-connectedness yields that $\geq 3$ vertices on $C$ have incident edges inside $C$, and $\geq 3$ of them have incident edges outside $C$, hence $m \geq 6$. This justifies Lemmas \ref{th:3-face}--\ref{th:5-face} for all cubic cases. Consequently, for any vertex $v$, we have
\[r_1(v) \equiv r_1 \quad \text{and} \quad k(v) := \#\{1 \leq i \leq 3: r_i(v) = r_1\} \equiv k.\]

Among the cases (1)--(9) in Table \ref{table 1}, (2) and (8) are simply impossible to realize. Also, one quickly sees the symbols $r(v)$ of the graphs realizing cases (1), (4), and (7) must be constant, where the corresponding determined solids are the tetrahedron, the hexahedron, and the dodecahedron. 

\medskip

In case (5) ($d = 3, r_1 = 4, k = 2$), the face $f_3(v_1)$ surrounded by $m = r_3(v_1) \geq 5$ quadrangular faces determines the unique pattern up to isomorphism. Therefore, $r(v) \equiv (4,4,m)$ and the graph is isomorphic to the $m$-gonal prism.

\medskip

For the other three cases (with $d=3,k=1$), we need to consider the second shortest cycles in $G$, that is, the cycles with length
\[\ell = \min\{m > r_1: \mathcal C_m(v) > 0 \text{ for some vertex } v\}.\]
It is clear that $\ell \leq r_2(v_1)$. We will claim in each individual case that 
\begin{lemma} \label{th:2nd shortest cycle}
    In a 3-connected walk-regular planar graph with $d=3$ and $k=1$, every $\ell$-cycle is a face.
\end{lemma}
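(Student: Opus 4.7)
I would split the argument into two parts according to whether $C$ has a chord.

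\emph{Chord case.} Suppose $C$ has a chord $e$. Then $e$ partitions $C$ into two shorter cycles $C_1, C_2$ with $|C_1|+|C_2|=\ell+2$, and since $\ell$ is the second shortest cycle length, both must have length exactly $r_1$. In case (6), $r_1(v_1)=4$ together with Lemma \ref{th:3-face} and walk-regularity gives $\mathcal{C}_3\equiv 0$; in case (9), $r_1(v_1)=5$ analogously forces $\mathcal{C}_3\equiv 0$ and, via Lemma \ref{th:4-face}, also $\mathcal{C}_4\equiv 0$. Consequently any chord of $C_i$ would produce a forbidden shorter cycle, so each $C_i$ is chordless and hence an $r_1$-face by the relevant Lemma \ref{th:3-face}, \ref{th:4-face}, or \ref{th:5-face} (the exceptions of the latter two require $d\geq 4$, so do not apply here). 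The two endpoints of $e$ then lie on two distinct $r_1$-faces $C_1, C_2$, contradicting $k=1$.

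\emph{Chordless case.} The cubic $3$-connectedness argument at the start of this subsection forces any chordless non-face cycle to have length $\geq 6$, resolving the lemma when $\ell\leq 5$. For $\ell\geq 6$ I argue case by case on the candidates in Table \ref{table 1}, taking case (3) as representative. Each vertex lies on a unique triangular face, and for $v\in C$ this triangle must use exactly one of $v$'s $C$-edges—otherwise it would use both, and its third edge would be a chord of $C$. Hence the triangle pairs $v$ with an adjacent $C$-vertex $u$ sharing an off-$C$ common neighbor $w$. The resulting matching partitions $C$ into $\ell/2$ adjacent pairs whose non-$C$ edges emanate to a common side of $C$. Writing $I, O$ for the numbers of $C$-vertices with non-$C$ edges inside, respectively outside $C$, both are even; combined with $I, O \geq 3$ from $3$-connectedness, both are at least $4$. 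For $\ell=6$ this already contradicts $I+O=6$. For $\ell\in\{8,10\}$, applying Euler's identity to the bounded region of $C$ together with walk-regularity—which via Lemma \ref{th:cycles} rules out $4$- and $5$-cycles in the relevant $(3,8,8)$ and $(3,10,10)$ subcases—forces some interior pair-vertex $w$ to satisfy $r(w)=(3,6,6)$, whence $p(w)=\tfrac{2}{3}>p(v_1)$, contradicting the maximality of $p(v_1)$. Cases (6) and (9) are treated analogously, with triangle-freeness (and $4$-cycle-freeness in (9)) providing the needed local rigidity around $C$.

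The main obstacle is the combinatorial enumeration for $\ell\in\{8,10\}$ in case (3): one must verify that every possible planar arrangement of the interior pair-vertices either produces such a vertex $w$, or conflicts outright with the cubic degree condition.
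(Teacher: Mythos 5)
Your chord case is correct and consistent with the paper (a chord splits an $\ell$-cycle into two cycles of length $<\ell$, hence two $r_1$-faces sharing the chord's endpoints, contradicting $k=1$), and your parity-plus-connectivity count correctly disposes of the chordless case when $\ell\le 6$ in case (3). The first genuine gap is case (3) with $\ell\in\{8,10\}$, and it comes from an undercount: you bound the number $I$ of $C$-vertices with an interior edge, but the sharper count is of \emph{distinct interior neighbours} of $C$. Each interior pair $(v,u)$ sends both of its off-$C$ edges to a single common vertex $w$, and distinct pairs cannot share their $w$ (that would force $\deg w\ge 4$), so the interior neighbours of $C$ number exactly $I/2$. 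Three-connectedness applied to this neighbour set gives $I/2\ge 3$, hence $I\ge 6$, and likewise $O\ge 6$, so a non-face $\ell$-cycle forces $\ell\ge 12$; combined with $\ell\le r_2(v_1)<12$ from \eqref{eq:3.1}, this eliminates $\ell=8,10$ immediately. Your proposed patch cannot work as stated: for $\ell=8$ or $10$ the graph has no $6$-cycles at all, so no interior vertex can have $r(w)=(3,6,6)$, and the ``Euler's identity on the bounded region'' step is never made precise.

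The second gap is the assertion that cases (6) and (9) are ``treated analogously.'' They are not, because the triangle-pairing that drives case (3) has no counterpart when $r_1=4$ or $5$: the $r_1$-face through a vertex of $C$ need not use that vertex's unique off-$C$ edge, so the off-$C$ edges of $C$ do not organize into matched pairs with common endpoints. In case (6) the paper must first prove that \emph{all} cycles have even length (a separate minimal-odd-cycle argument), deduce $\ell=6$ (your sketch does not even exclude $\ell=7$ there), and then analyse how the two or three square faces adjacent to a $6$-cycle can be distributed relative to its inside and outside. In case (9) the paper takes a minimal non-face $6$-cycle, pins down its entire neighbourhood from the constraint $\mathcal C_5(a_i)=1$, and reaches a contradiction by comparing $\mathcal C_6$ at two specific vertices via Lemma \ref{th:cycles} and the minimality of $C$ --- a use of walk-regularity that has no analogue in your case (3) argument. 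As written, your proposal establishes the lemma only for the chord case and for case (3) with $\ell\in\{4,6\}$.
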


\subsection*{Case (3)} ($d=3,r_1=3,k=1$) Note that each $\ell$-cycle is chordless, and it should be adjacent to exactly $\ell/2$ triangles, and thus $\ell$ must be even. Suppose that an $\ell$-cycle $C$ is not a face. Then by the 3-connectedness, $C$ has $\geq 3$ neighbors inside $C$ and $\geq 3$ neighbors outside $C$. Since $C$ is chordless and each vertex must be adjacent to a triangle,  we must have $\ell \geq 12$. However, in this situation one has $r_2(v_1) \geq 12$ and 
\[p(v_1) \leq \frac 13 + \frac 1{12} + \frac 1{12} = \frac 12,\]
which violates \eqref{eq:3.1}. This proves Lemma \ref{th:2nd shortest cycle} for $r_1 = 3$, and the possible $\ell = 4,6,8$ or $10$. 

Assume $\ell = 4$ or $6$. It follows from Lemma \ref{th:cycles} that $\mathcal C_4(v)$ or $\mathcal C_6(v)$ is constant for all vertices $v$. For $\ell = 8$, we can count the number of $8$-cycles though a vertex in its 4-neighborhood to obtain 
\[
(A^8)_{v,v} = 591 + 2 \mathcal C_8(v).
\]
For $\ell = 10$, a similar counting in the 5-neighborhood of the vertex $v$ provides 
\[
(A^{10})_{v,v} = 4223 + 2 \mathcal C_{10}(v).
\]
Hence whatever $\ell$ is, we have $\mathcal C_\ell(v) \equiv \mathcal C_\ell$. Since 3 is odd, drawing a triangular face on which each vertex is adjacent to exactly one face with length $\ell$ is impossible. The only other possibility is $\mathcal C_\ell = 2$, and it follows that $r(v) \equiv (3,\ell,\ell)$. The corresponding solids are the triangular prism, the $(3,6,6)$-solid, the $(3,8,8)$-solid and the $(3,10,10)$-solid respectively.

\subsection*{Case (6)} ($d=3,r_1=4,k=1$) We first claim that every cycle has even length. If this was false, we let $\ell' = \min\{\text{odd } m: \mathcal C_m(v)> 0 \text{ for some vertex } v\}$, and let $C$ be a {\it minimal} $\ell'$-cycle, in the sense that no $\ell'$-cycles are properly contained in the closed region bounded by $C$. 
Suppose that a vertex $b_1$ is inside $C$. The walk-regularity implies that, starting at $b_1$ there is at least one closed walk with length $\ell'$, denoted by $C' = \overline{b_1b_2 \cdots b_
{\ell'}b_1}$. By the minimality of $\ell'$ and $C$, $C'$ should be a cycle, and some of vertices on $C'$ lie outside $C$. Let $b_i, b_j$ be the first and the last intersection vertices of $C$ and $C'$ respectively, that is, $i = \min \{m: b_m~\text{lies on}~C\}$ and $j = \max \{m: b_m~\text{lies on}~C\}$. Such $b_i$ and $b_j$ divide $C$ into two paths $\mathcal P_1, \mathcal P_2$ with length $\ell_1, \ell_2$, and since $\ell_1 + \ell_2 = \ell'$ is odd, one of $\ell_1,\ell_2$ (say $\ell_1$) has the same parity as $\ell'+i-j$. As the union of two paths $\overline{b_ib_{i-1} \cdots b_1 b_{\ell'} b_{\ell'-1} \cdots b_j}$ and $\mathcal P_2$ form a cycle with odd length $\ell'+i-j + \ell_2$ inside the closed region bounded by $C$, it follows again from the minimality of $\ell'$ and $C$ that $\ell'+i-j + \ell_2 > \ell'$. But now, one has the union of two paths $\overline{b_ib_{i+1}\cdots b_j}$ and $\mathcal P_1$ as a closed walk with odd length $j-i+\ell_1 < \ell'$, which still violates the minimality of $\ell'$. 
Therefore, $C$ has no inner neighbors, and it should be an inner face in the planar embedding; however, this is also impossible, since at least one adjacent 4-cycle of $C$ shares three vertices with $C$, of which the middle one must have an inner incident edge contradicting Lemma \ref{th:4-face}. The claim is justified.

Using \eqref{eq:3.1}, we obtain $\ell \leq r_2(v_1) < 8$. By the claim, the only possibility is $\ell = r_2(v_1) = 6$. For a 6-cycle $C$, it has either two or three adjacent $4$-cycles. In the former situation, $C$ has only two inner neighbors and two outer neighbors, which violates 3-connectedness. For the latter one, to be 3-connected all of the three adjacent $4$-cycles must lie outside $C$ whenever $C$ is not the outer face. This proves that every $6$-cycle is a face, and justifies Lemma \ref{th:2nd shortest cycle} for $r_1 = 4$.

In view of Lemma \ref{th:cycles}, $\mathcal C_6(v) \equiv \mathcal C_6 = 1$ or $2$. For $\mathcal C_6 = 2$, we have $r(v) \equiv (4,6,6)$, and the graph is isomorphic to the $(4,6,6)$-solid. 

Now we consider the sub-case $\mathcal C_6 = 1$. It follows from \eqref{eq:3.1} and the claim that $r_3(v_1) = 8$ or $10$. For a vertex $v$ with $r(v) = (4,6,m)$, by counting in the 4- and 5-neighborhood of $v$ we obtain
\[
(A^8)_{v,v} = 809 + \begin{cases}
    \, 2 \quad \text{if } m = r_3(v_1) = 8, \\
    \, 0 \quad \text{if } m > r_3(v_1) = 8,
\end{cases}
\]
and
\[
(A^{10})_{v,v} = 6063 + \begin{cases}
    \, 2 \quad \text{if } m = r_3(v_1) = 10, \\
    \, 0 \quad \text{if } m > r_3(v_1) = 10.
\end{cases}
\]
Hence, $r(v) \equiv (4,6,8)$ or $\equiv (4,6,10)$, which corresponds to the $(4,6,8)$-solid or the $(4,6,10)$-solid respectively.

\subsection*{Case (9)} ($d=3,r_1=5,k=1$) Using \eqref{eq:3.1} we immediately get $\ell = r_2(v_1) = 6$. We will prove that every $6$-cycle is a face in this case, and conclude the proof of Lemma \ref{th:2nd shortest cycle}.

\begin{proof}[Proof of Lemma \ref{th:2nd shortest cycle} for $r_1 = 5$]
   Suppose that a 6-cycle $C = \overline{a_1a_2 \cdots a_6a_1}$ is not a face. Without loss of generality, we further assume that such $C$ is {\it minimal} among all the 6-cycles that are not faces. More precisely, this minimality of $C$ implies that any 6-cycle properly contained in the closed region bounded by $C$ should be a face. 

   By the 3-connectedness of the graph, $C$ has exactly three inner incident edges and three outer incident edges. There is a unique (up to isomorphism) possible neighborhood of $C$ such that $\mathcal C_5(a_i) = 1$ for all vertices $a_i$ on $C$, as depicted in Figure \ref{fig: 6-cycle-1}, where $C$ is adjacent to a face $\overline{b_2a_4a_5a_6b_3b_2}$ with length 5 inside $C$ and a pentagon face outside $C$ bounded by the path $\overline{a_1a_2a_3}$. Note that the inner neighbor $b_1$ of $a_2$ is not adjacent to $b_2$ or $b_3$, and hence, both the face bounded by the path $\overline{b_1a_2a_1a_6b_3}$ and the one bounded by $\overline{b_1a_2a_3a_4b_2}$ have length $>5$. Besides these two faces, the remaining adjacent face of $b_1$ should be of length $5$ so that $\mathcal C_5(b_1) = 1$. However, no matter which adjacent face of $b_1$ is of length $6$, the minimality of $C$ implies that $\mathcal C_6(b_1) < \mathcal C_6(a_2)$, which contradicts Lemma \ref{th:cycles}.
    \begin{figure}
    \includegraphics[width=0.5\textwidth]{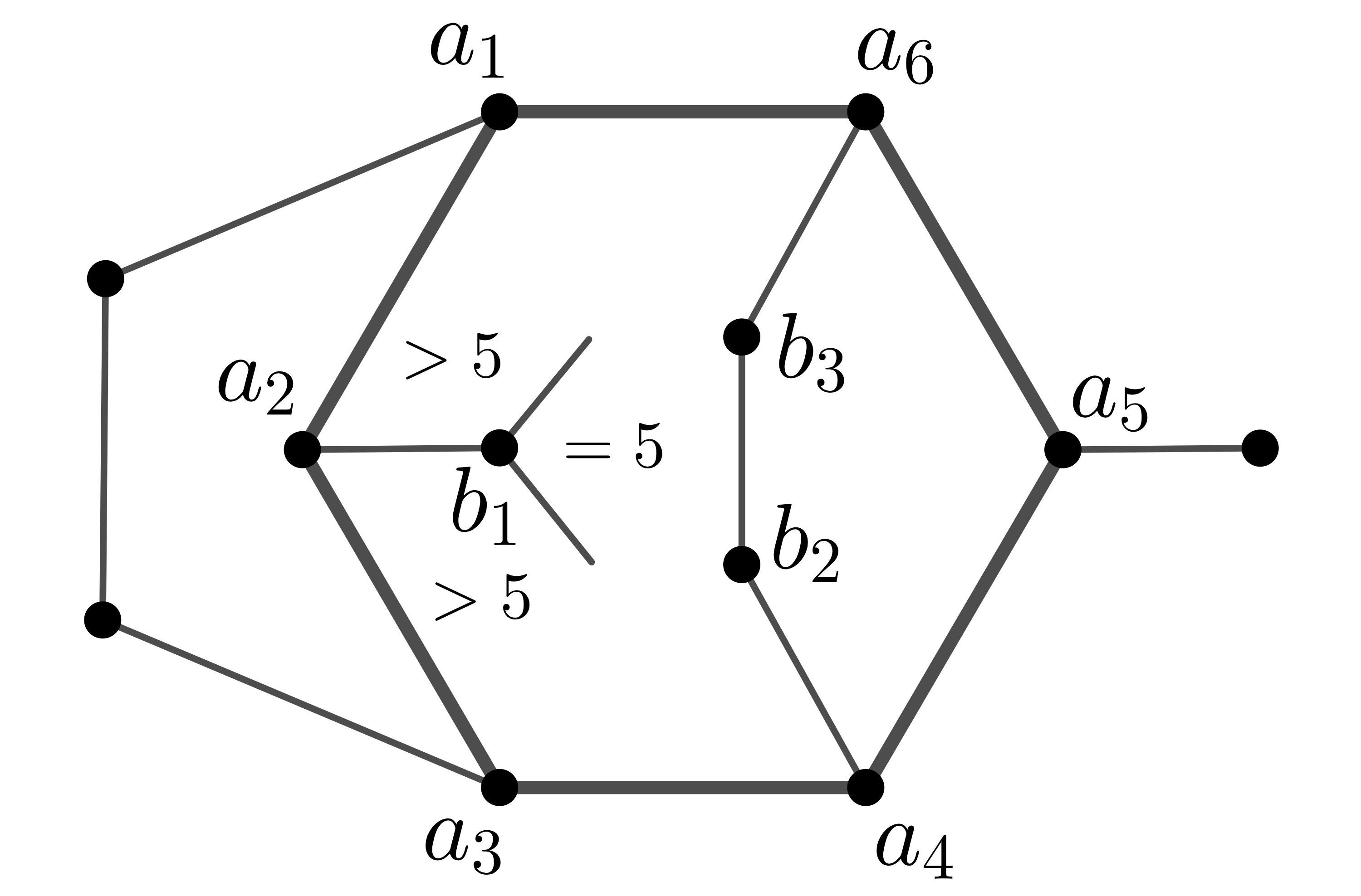}
    \caption{An impossible 6-cycle with $r_1 = 5$.}
    \label{fig: 6-cycle-1}
    \end{figure}
\end{proof}

Now using Lemmas \ref{th:cycles} and \ref{th:2nd shortest cycle} we have $\mathcal C_6(v) \equiv \mathcal C_6 = 1$ or $2$. A pentagon face on which each vertex is adjacent to exactly one face with length 6 is simply impossible, and hence $\mathcal C_6 = 2$. This yields that $r(v) \equiv (5,6,6)$, and the graph is isomorphic to the $(5,6,6)$-solid. 

\subsection{The 4-regular cases} Now we assume that $G$ is 4-regular. 
We first justify that every triangle in $G$ is a face:

\begin{proof}[Proof of Lemma \ref{th:3-face} for $d=4$]
Suppose that a triangle $C$ is not a face. By the 3-connectedness, each vertex $a_i$ has at least one incident edge inside $C$ and at least one incident edge outside $C$. Therefore, it must have $d_*(C) = (1,1,1)$. However, this is simply impossible as the total degree of the induced subgraph of the vertices inside $C$ is odd.
\end{proof}

Consequently, the octahedron is determined as $r(v) \equiv (3,3,3,3)$ in case (10). For case (11) ($d=4,r_1=3,k=3$), the face $f_4(v_1)$ surrounded by $m = r_4(v_1) \geq 4$ triangular faces determines the unique pattern up to isomorphism, hence $r(v) \equiv (3,3,3,m)$ and the graph is isomorphic to the $m$-gonal antiprism.

\medskip

The other two cases (12) and (13) are more involved. We need to prove that every 4-cycle is either a face or the boundary of two adjacent triangular faces there. 

\begin{proof}[Proof of Lemma \ref{th:4-face} for $d=4$]
    Suppose that a chordless 4-cycle $C = \overline{a_1a_2a_3a_4a_1}$ is not a face. By the 3-connectedness, at least three of $\{a_1,a_2,a_3,a_4\}$ have incident edges inside $C$, and at least three of them have incident edges outside $C$. Without loss of generality, we assume that such $C$ is {\it minimal}, that is, any chordless 4-cycle properly contained in the closed region bounded by $C$ should be a face. 
 
    Since the total degree of the subgraph inside $C$ is even, the number of incident edges of $\{a_1,a_2,a_3,a_4\}$ inside $C$ equals $4$. The possible inner degree sequence of $C$ is $(0,1,1,2), (0,1,2,1)$ or $(1,1,1,1)$. The case $d_*(C) = (0,1,2,1)$ is simply ruled out, as $\{a_2,a_4\}$ should not be a cut-set of the graph. 

    \medskip

    \noindent {\it Case: $d_*(C) = (0,1,1,2)$.} Note that $\mathcal C_3 = \mathcal C_3(v_1) \geq 1$. In this case,  $\mathcal C_3 \geq 2$ is impossible since $\{a_1,a_2,a_3,a_4\}$ has $\geq 3$ neighbors inside $C$ and $\geq 3$ neighbors outside $C$ (by the 3-connectedness). It follows that $\mathcal C_3 = 1$, and every 4-cycle inside $C$ is chordless so is a face. Using \eqref{eq:3.1}, we have $\mathcal C_4 = \mathcal C_4(v_1) \geq 2$. Lemma \ref{th:3-face} implies that every vertex inside $C$ is surrounded by at most one face with length $>4$. Let $b_1,b_2$ be the two neighbors of $a_4$ inside $C$. When $\overline{b_1b_2}$ is an edge, to have $\mathcal C_3(a_3) = \mathcal C_3(a_4) = 1$, $a_2$ and $a_3$ have exactly one common neighbor $u$, and this gives a cut-set $\{a_4,u\}$ if $u$ lies inside $C$ or a cut-set $\{a_1,u\}$ if $u$ lies outside $C$, which violates the 3-connectedness.   
    
    Now the only possibility left is that one of $b_1,b_2$ (say $b_1$) is adjacent to $a_3$, and one of the neighbors of $a_1$ outside $C$ is adjacent to $a_2$. Let $b_3$ be the neighbor of $a_2$ inside $C$. Since the face bounded by $\overline{b_2a_4a_1a_2b_3}$ has length $>4$, we have $\mathcal C_4 = 2$, and both the face bounded by $\overline{b_2a_4b_1}$ and the one bounded by $\overline{b_3a_2a_3b_1}$ have length $4$, as drawn in Figure \ref{fig: 4-cycle-0-1-1-2}. Then, to have $\mathcal C_3(b_1) + \mathcal C_4(b_1) = 3$, the undetermined face with $\overline{b_1b_3}$ should have length $>4$, contradicting the fact that $\mathcal C_3(b_3) + \mathcal C_4(b_3) = 3$. 
    \begin{figure}
    \includegraphics[width=0.5\textwidth]{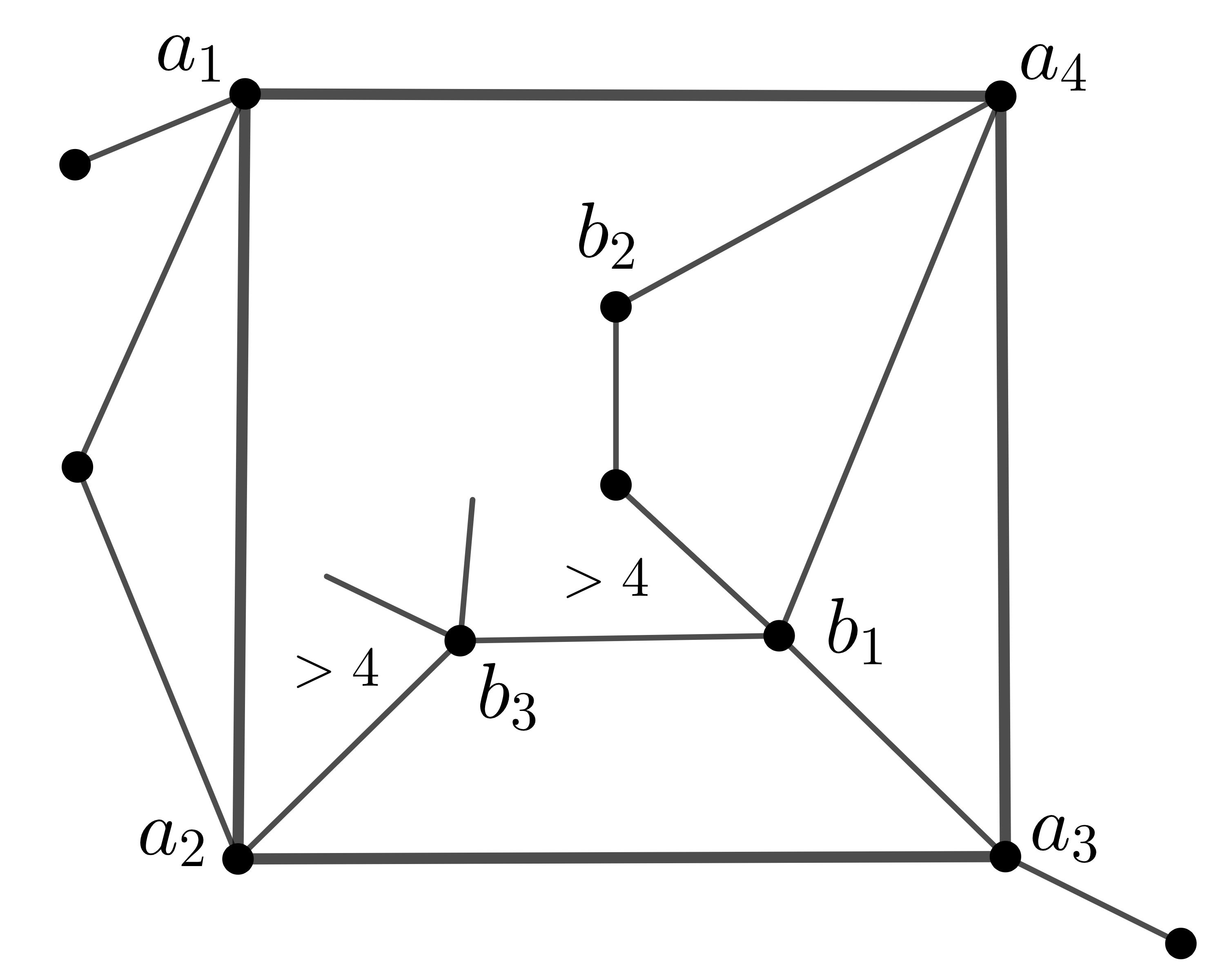}
    \caption{An impossible 4-cycle with $d_*(C) = (0,1,1,2)$.}
    \label{fig: 4-cycle-0-1-1-2}
    \end{figure}

    \medskip

    \noindent {\it Case: $d_*(C) = (1,1,1,1)$.} One special situation is that $\{a_1,a_2,a_3,a_4\}$ has a common neighbor inside $C$, in which $\mathcal C_3 = 4$ and the graph is isomorphic to the octahedron. If this is not the case, again by the 3-connectedness, there are at least three distinct neighbors of $\{a_1,a_2,a_3,a_4\}$ inside $C$ as well as at least three distinct ones outside $C$. From this we know that $\mathcal C_3 = 1$, one edge (say $\overline{a_1a_4}$) of $C$ lies on an incident triangle inside $C$, and the opposite edge ($\overline{a_2a_3}$) of $C$ lies on an incident triangle outside $C$. Denote by $b_1,b_2,b_3$ the neighbors of $a_1,a_2,a_3$ inside $C$ respectively. Note that there are no chorded $4$-cycles inside $C$ or else $\mathcal C_3 \geq 2$, and hence every $4$-cycle inside $C$ is a face by the minimality of $C$, thus $\mathcal C_4 \leq 3$.
    
    If $\mathcal C_4 = 3$, then none of the faces inside $C$ has length $>4$, and it forces $\overline{b_1b_2b_3b_1}$ to be a triangle but not a face, which violates Lemma \ref{th:3-face}.  
    
    Now we suppose $\mathcal C_4 = 2$. Once $b_2,b_3$ are adjacent, to have $\mathcal C_4(a_2) = \mathcal C_4(a_3) = 2$, both the face bounded by $\overline{b_2a_2a_1b_1}$ and the one bounded by $\overline{b_3a_3a_4b_1}$ have length $>4$, which yields that $\mathcal C_4(b_1) \leq 1$. Hence any graph with $\overline{b_2b_3}$ is ruled out.   
    
    Without $\overline{b_2b_3}$, the face bounded by $\overline{b_2a_2a_3b_3}$ has length $>4$. To have $\mathcal C_4(b_2) = \mathcal C_4(b_3) = 2$, both $b_2,b_3$ must be adjacent to $b_1$, which gives a cut-set $\{b_2,b_3\}$ of the graph (Figure \ref{fig: 4-cycle-1-1-1-1}), a contradiction.

    \begin{figure}
    \includegraphics[width=0.5\textwidth]{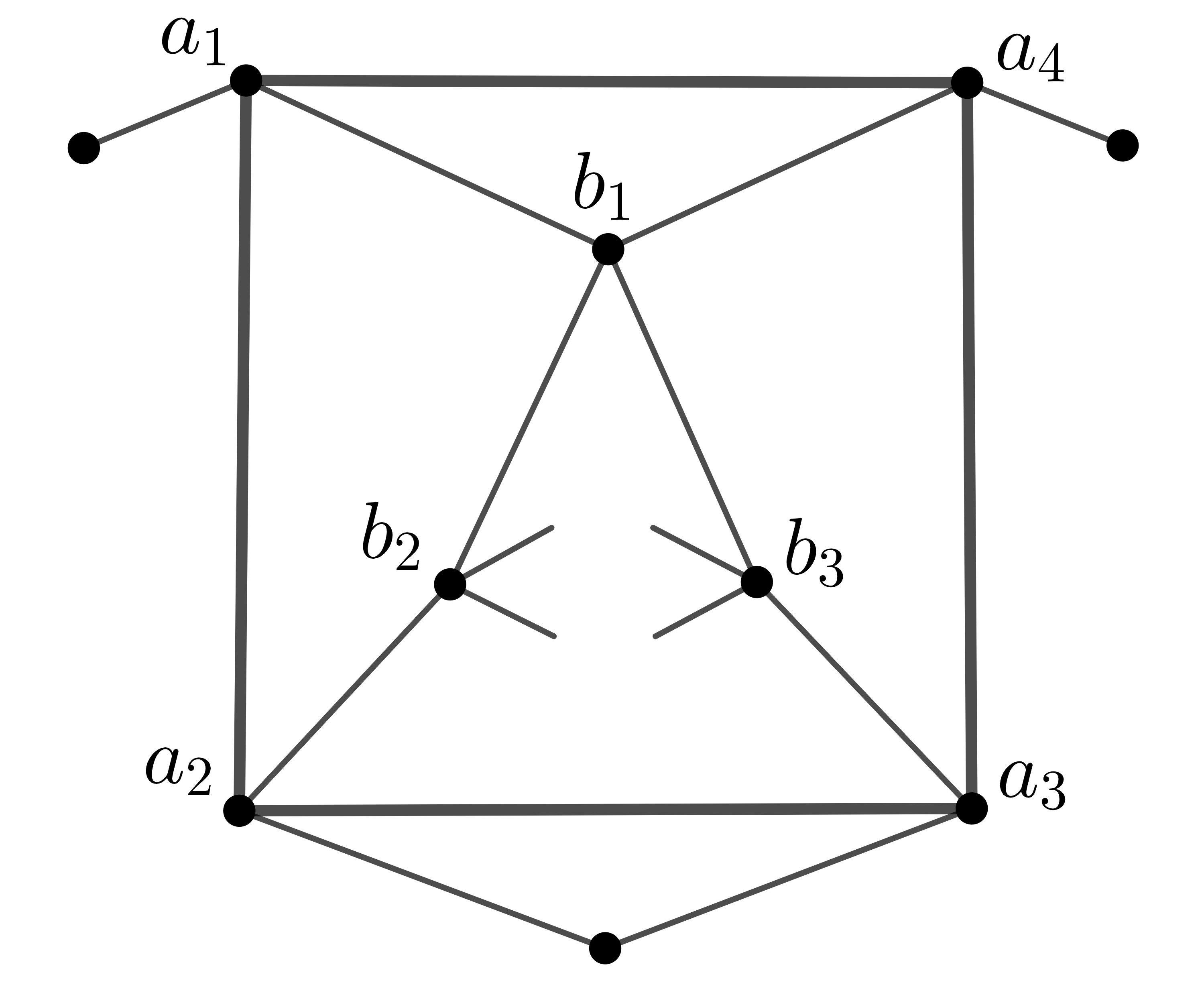}
    \caption{An impossible 4-cycle with $d_*(C) = (1,1,1,1)$.}
    \label{fig: 4-cycle-1-1-1-1}
    \end{figure}
\end{proof}

\subsection*{Case (12)} ($d=4, r_1=3, k=2$) We first justify Lemma \ref{th:5-face} for this case.

\begin{proof}[Proof of Lemma \ref{th:5-face} for case (12)]
    Suppose a chordless 5-cycle $C = \overline{a_1a_2a_3a_4a_5a_1}$ is not a face. 
    Note that the number of inner incident edges of $C$ must be even. By the 3-connectedness, the possible choices for $d_*(C)$ are $(0,0,1,1,2)$, $(0,1,0,1,2)$, $(0,1,1,1,1)$, $(0,1,1,2,2)$, $(0,1,2,1,2)$ or $(1,1,1,1,2)$. In view of the symmetry between the inside and outside of $C$, it suffices to rule out the first three situations, as illustrated in Figure \ref{fig: 5-cycle}.
    \begin{figure}
    \includegraphics[width=0.28\textwidth]{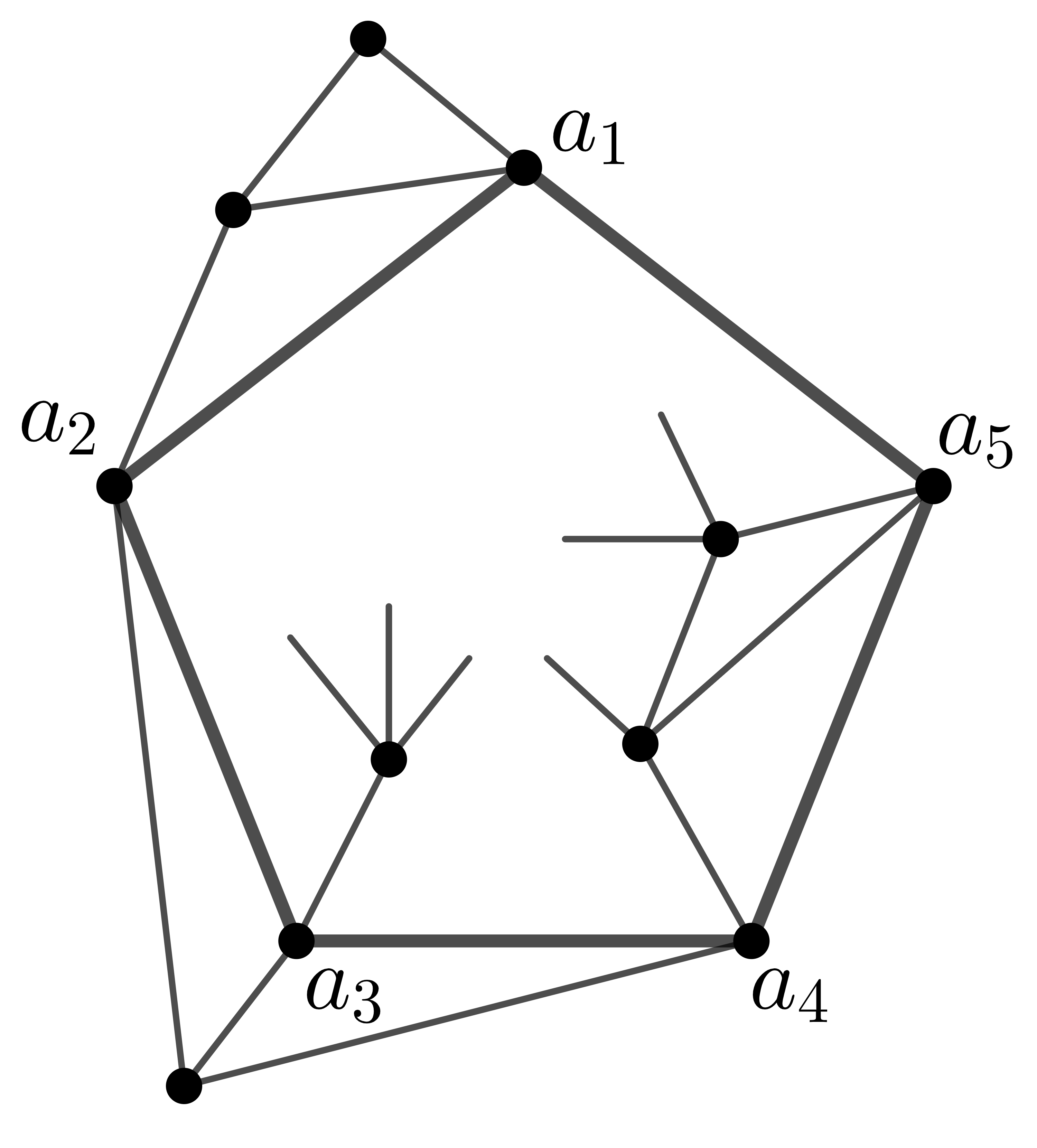}
    \includegraphics[width=0.3\textwidth]{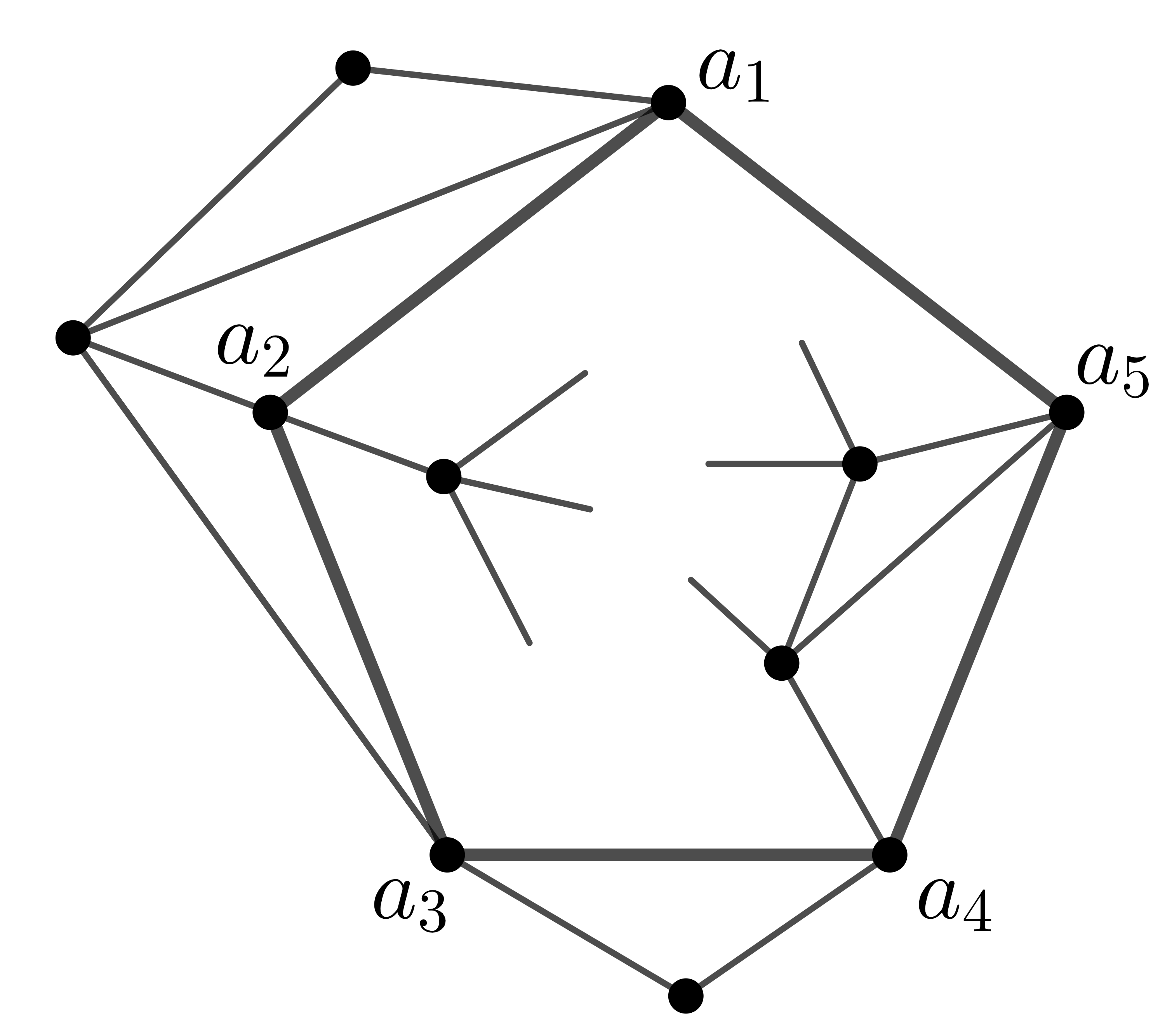}
    \includegraphics[width=0.33\textwidth]{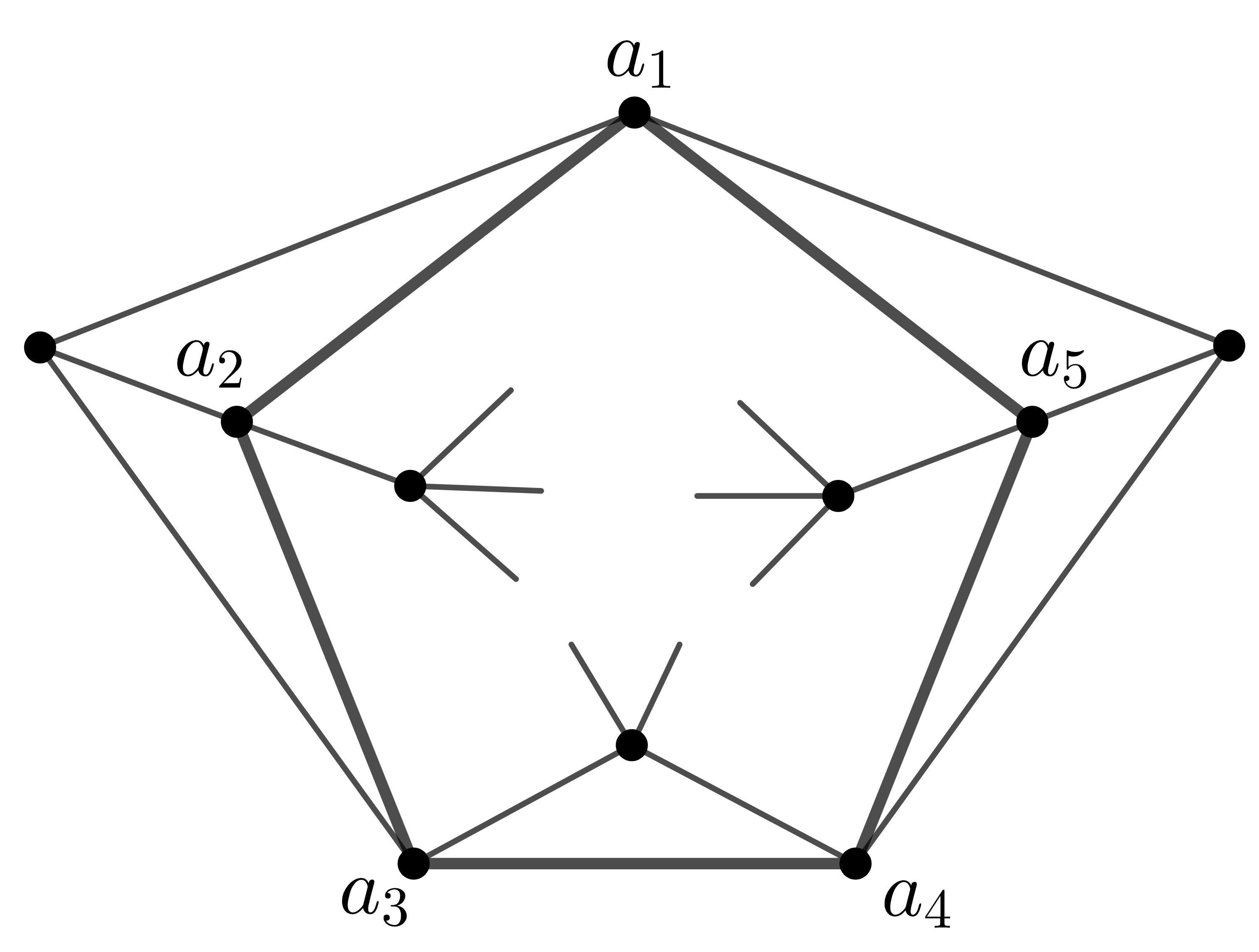}
    \caption{Three impossible 5-cycles with $d_*(C) = (0,0,1,1,2)$, $(0,1,0,1,2)$ and $(0,1,1,1,1)$ respectively.}
    \label{fig: 5-cycle}
    \end{figure}

    \medskip
    
    \noindent {\it Sub-case: $d_*(C) = (0,0,1,1,2)$.} Note that one of the two incident triangles of $a_1$ is bounded by $\overline{a_1a_2}$, and one of the two incident triangles of $a_5$ is bounded by $\overline{a_4a_5}$. To have three distinct inner neighbors of $C$ (by the 3-connectedness), $a_3$ has no incident triangles inside $C$, and thus $a_2,a_3,a_4$ have a common outer neighbor. This leads to $\mathcal C_4(a_4) \geq 2 > \mathcal C_4(a_5)$, a contradiction.

    \medskip

    \noindent {\it Sub-case: $d_*(C) = (0,1,0,1,2)$.} As in the first sub-case, a triangle outside $C$ is bounded by $\overline{a_1a_2}$ and one inside $C$ is bounded by $\overline{a_4a_5}$. It follows similarly that $a_2$ has no inner incident triangles, and this forces $a_1,a_2,a_3$ to have a common neighbor outside $C$ which has three incident triangles. 

    \medskip

    \noindent {\it Sub-case: $d_*(C) = (0,1,1,1,1)$.} By the 3-connectedness, there is at most one inner incident triangle of $C$. We also note that, in $\{a_2,a_3,a_4,a_5\}$ the vertices with two incident triangles outside $C$ cannot be adjacent, otherwise four of the vertices on $C$ have a common neighbor that is incident to three triangles. Therefore, the only possible choice of the unique inner incident triangle of $C$ is the one bounded by $\overline{a_3a_4}$. However, in this situation $C$ has only two outer neighbors, which violates the 3-connectedness.
\end{proof}

Now we are ready to finish off Case (12). In this case, each vertex is incident to two triangles and two other faces of length $\geq 4$. In view of Lemma \ref{th:cycles}, we subdivide the argument into four sub-cases where $\mathcal C_4(v) \equiv \mathcal C_4 = 0,1,2$ or $\geq 3$.

\subsubsection*{Sub-case: $\mathcal C_4 = 0$} There are no faces of length $4$, and no two triangles share an edge. It follows that for each $v$, $r(v)$ is of the form $(3,m,3,m')$ for some $5 \leq m \leq m'$.
Such graphs have plenty of structure. Firstly, each face of length $\geq 5$ is adjacent only to triangular faces. Furthermore, each such face is, through each vertex, opposed to another face of length $\geq 5$. 

Using Lemma \ref{th:cycles} and \eqref{eq:3.1}, we have $\mathcal C_5(v) = \mathcal C_5(v_1) \geq 1$ for any vertex $v$. Also we note that every 5-cycle is chordless, and so is a face by Lemma \ref{th:5-face}. If we attempt to construct a triangle on which each vertex is incident to exactly one pentagon face, we quickly run into absurdities. Hence, we are left with $r(v) \equiv (3,5,3,5)$, and the graph is isomorphic to the $(3,5,3,5)$-solid.

\subsubsection*{Sub-case: $\mathcal C_4 = 1$} First, we rule out the possibility that such a graph has a square face. If we start with a square face, we obtain a graph resembling that in Figure \ref{fig: g3}, which we will call a ``bee" since it looks like it has a head with two antennae, a thorax with two wings, and an abdomen.
\begin{figure}
\includegraphics[width=0.45\textwidth]{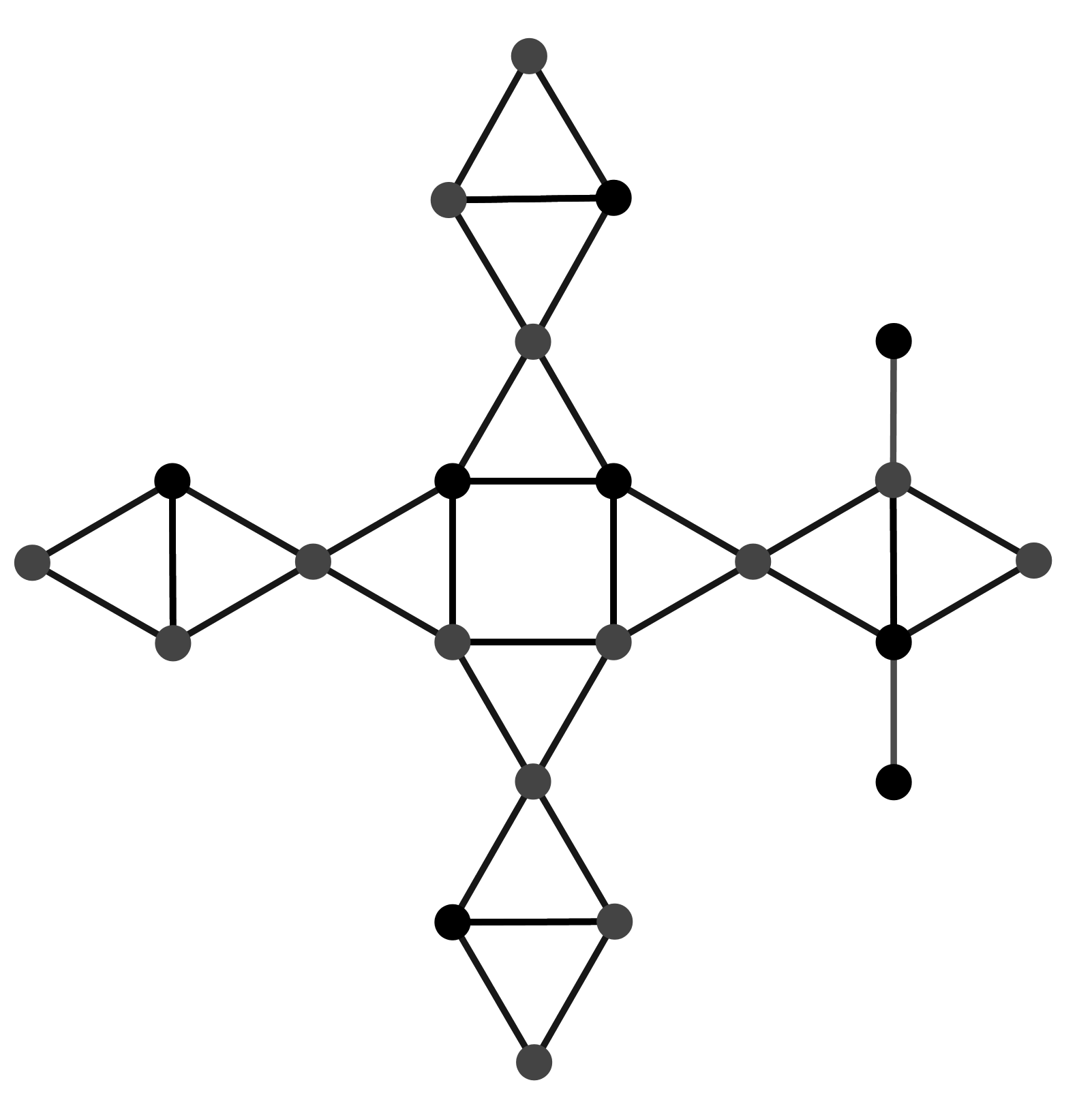}
\caption{Our ``bee" graph.}
\label{fig: g3}
\end{figure}
Note, one or both of the antennae may attach to a wing to form a pentagon face. 
Applying Lemma \ref{th:5-face}, we find that, $\mathcal C_5(v) \geq 4$ for any vertex $v$ on the square face, while $\mathcal C_5(v') \leq 3$ for the vertex $v'$ at the ``neck" of the bee. This is contrary to Lemma \ref{th:cycles}.

Combining this and Lemma \ref{th:4-face}, we have shown that the only $4$-cycles we find are the boundary of the union of two triangular faces. Since there are no square faces, there must be at least one pentagon face. The graph self-destructs upon any attempt to draw the $1$-neighborhood of a pentagon face.

\subsubsection*{Sub-case: $\mathcal C_4 = 2$} There are a few situations we must rule out. We first establish that each vertex must be incident to a square face. Suppose $v$ is not adjacent to a square face. Then, $r(v) = (3, 3, m, m')$ or $(3, m, 3, m')$, where $m, m' \geq 5$. The first is incident to only one $4$-cycle, so is ruled out. We try to draw the graph starting at this vertex and obtain the one in Figure \ref{fig: g4}, which forces a vertex to lie on at least three 4-cycles.
\begin{figure}
\includegraphics[width=0.5\textwidth]{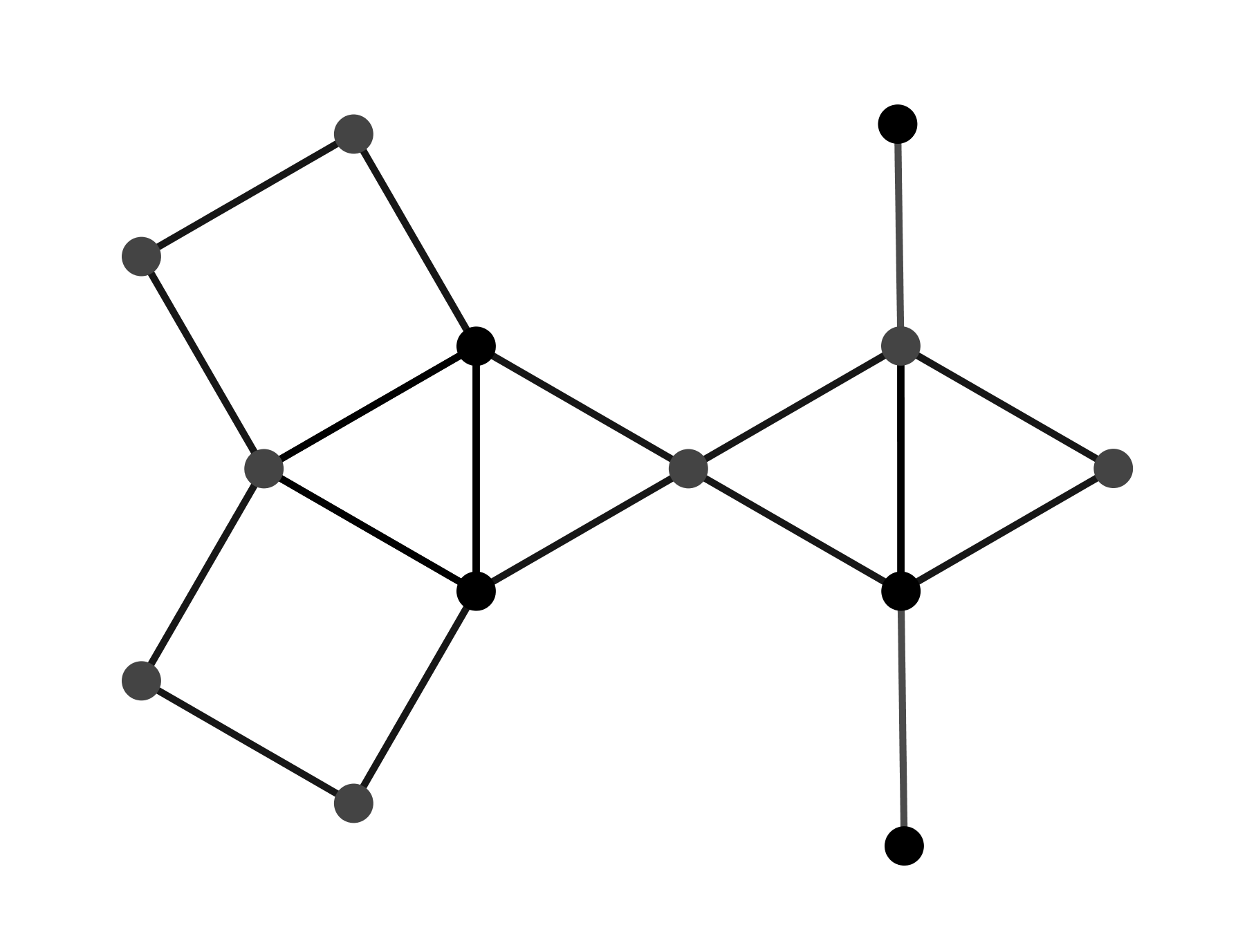}
\caption{The left-center vertex is forced to be incident to three $4$-cycles.}
\label{fig: g4}
\end{figure}

With this in hand, any construction of a graph with a vertex with the label $(3, 3, 4, m)$ for $m \geq 5$ fails after drawing a few faces. Hence, every vertex of our graph must have label $(3, 4, 3, 4)$, and we obtain the $(3, 4, 3, 4)$-solid.

\subsubsection*{Sub-case: $\mathcal C_4 \geq 3$} The only way a vertex can be incident to four $4$-cycles is if its $2$-neighborhood is that in Figure \ref{fig: g5}. However, this vertex's neighbors are only ever incident to three $4$-cycles.
\begin{figure}
	\includegraphics[width=0.5\textwidth]{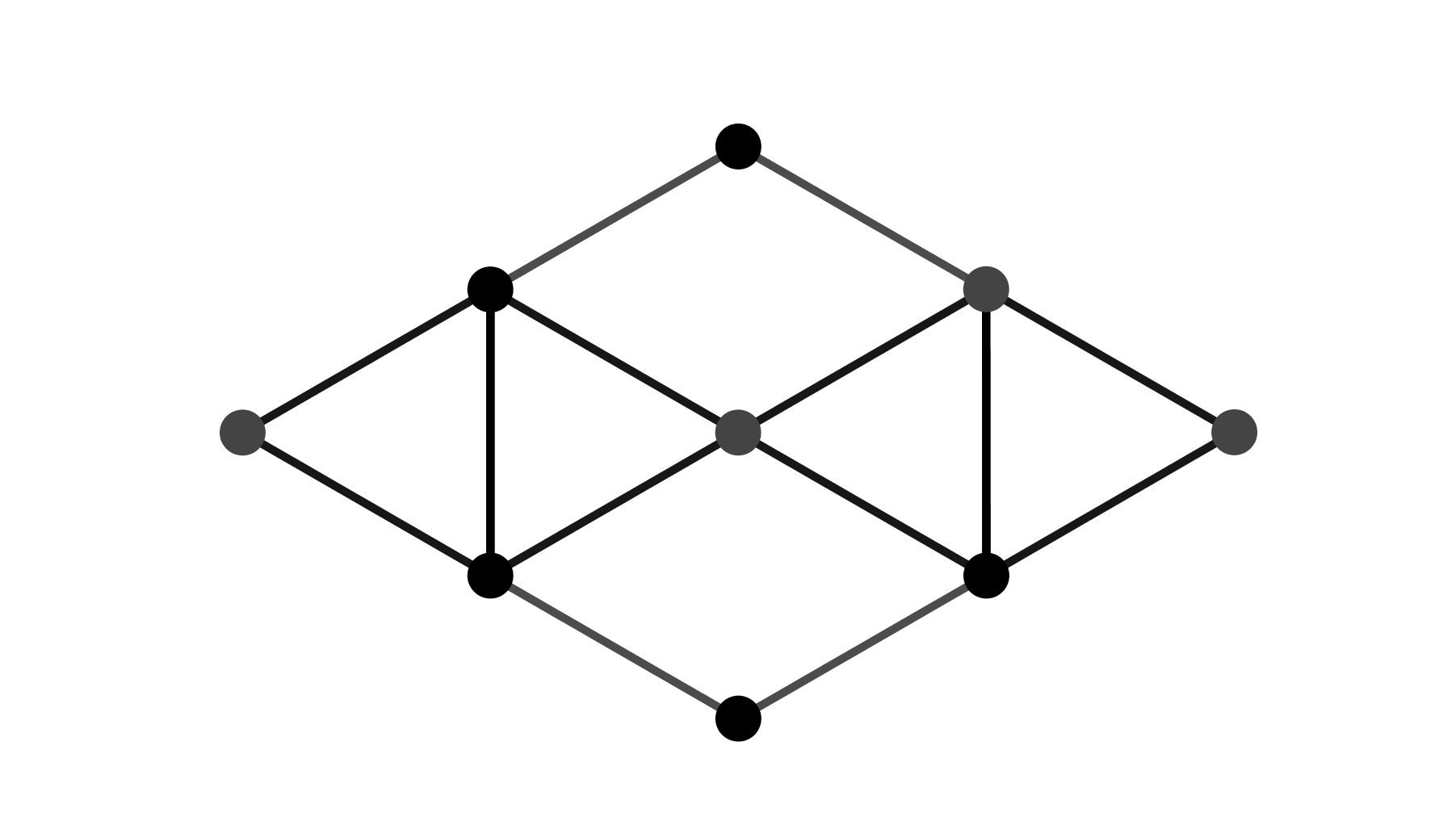}
	\caption{$2$-neighborhood of a vertex incident to four $4$-cycles.}
	\label{fig: g5}
\end{figure}
Hence, all that remains is to exclude the case $\mathcal C_4 = 3$.

A similar argument as the one above excludes the case where $v$ is incident to a face of length $\geq 5$. Hence, for each vertex $v$, we have either $r(v) = (3, 4, 3, 4)$ or $r(v) = (3, 3, 4, 4)$. There is a unique graph with these properties, depicted in Figure \ref{fig: g6}.
\begin{figure}
	\includegraphics[width=0.5\textwidth]{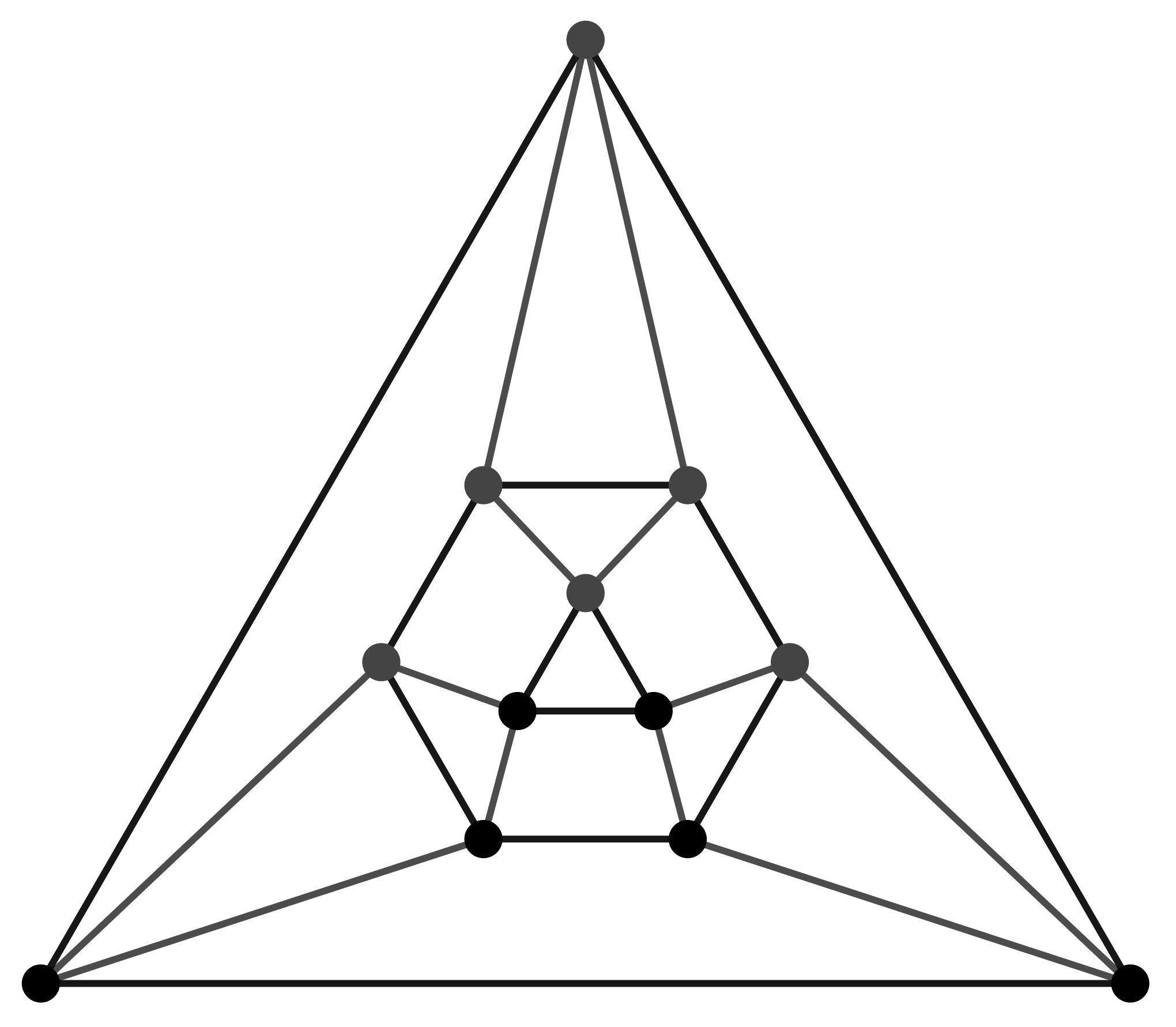}
	\caption{The graph with exactly three $4$-cycles through each vertex.}
	\label{fig: g6}
\end{figure}
This graph, however, is not walk-regular. The three innermost and outermost vertices all have seven incident $5$-cycles, while the other vertices have eight.

\subsection*{Case (13)}($d = 4, r_1 = 3, k = 1$)
As in the proof of the previous case, we subdivide the argument according to $\mathcal C_4$, the number of 4-cycles through each vertex. Since there is exactly 1 triangular face incident to each vertex, $\mathcal C_4$ is exactly the number of square faces around it. 
By \eqref{eq:3.1}, we must have $p(v_1)>1$, and thus $\mathcal C_4 \geq 2$.

\subsubsection*{Sub-case: $\mathcal C_4 = 3$} This case is straightforward since we already know that $r(v)\equiv(3,4,4,4)$. We remark that, however, when one wants to reconstruct the $(3,4,4,4)$-solid, there are two possible graphs, the other one being the twisted $(3,4,4,4)$-solid graph. Nonetheless, there are vertices on the twisted $(3,4,4,4)$-solid that have different numbers of closed walks of length 7, and thus, this twisted case should be ruled out. See Figure \ref{3444}.
\begin{figure}
    \includegraphics[width=0.85\textwidth]{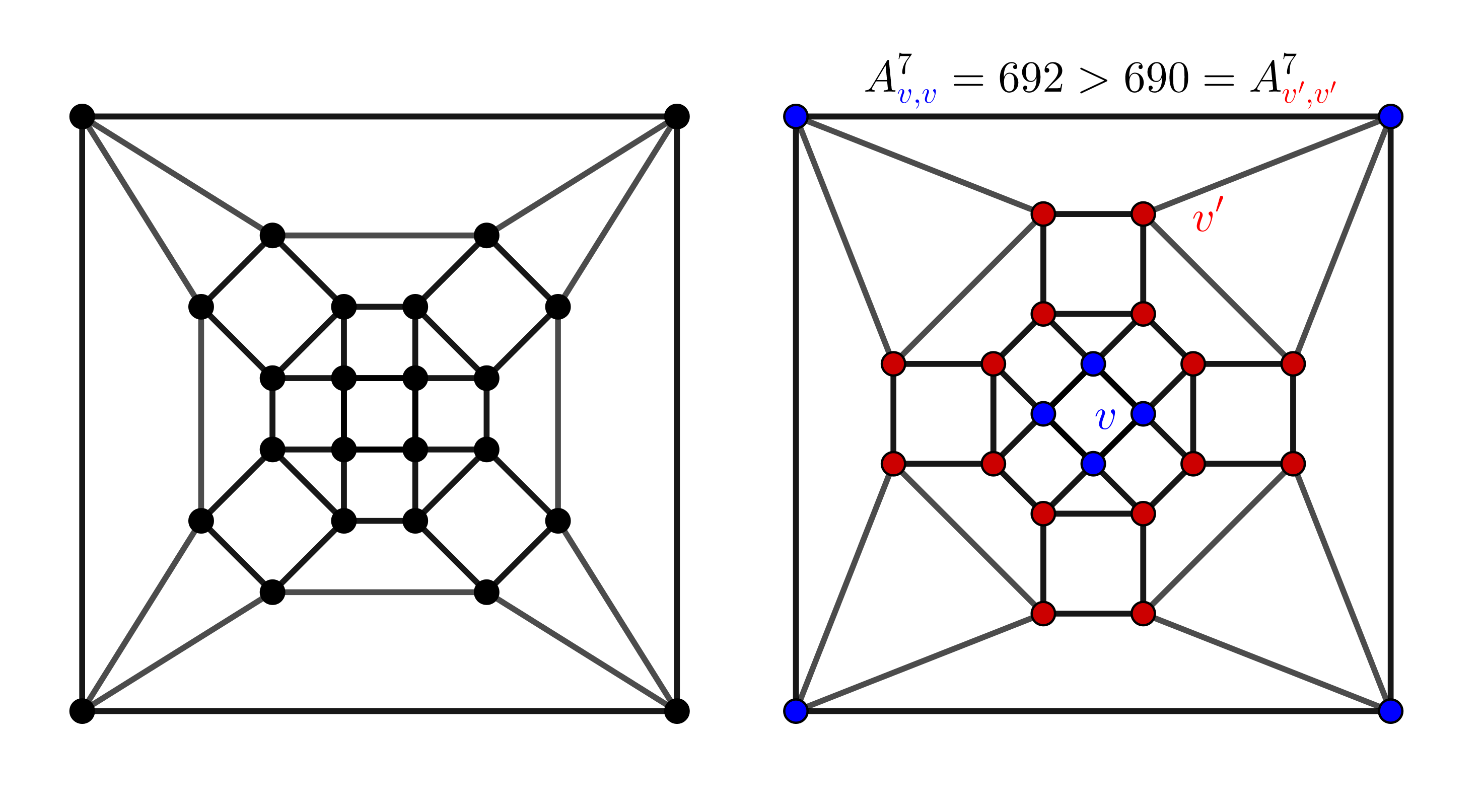}
    \caption{(3,4,4,4)-solid and twisted (3,4,4,4)-solid.}\label{3444}
\end{figure}

\subsubsection*{Sub-case: $\mathcal C_4 = 2$} In this sub-case, $r(v_1)$ is either $(3,4,4,5)$ or $(3,4,5,4)$. We first consider the case $r(v_1)=(3,4,4,5)$. One can try to draw such a graph starting from $v_1$ and end up with a neighborhood of the shape as in Figure \ref{fig: 3454-1}.
\begin{figure}
    \includegraphics[width=0.55\textwidth]{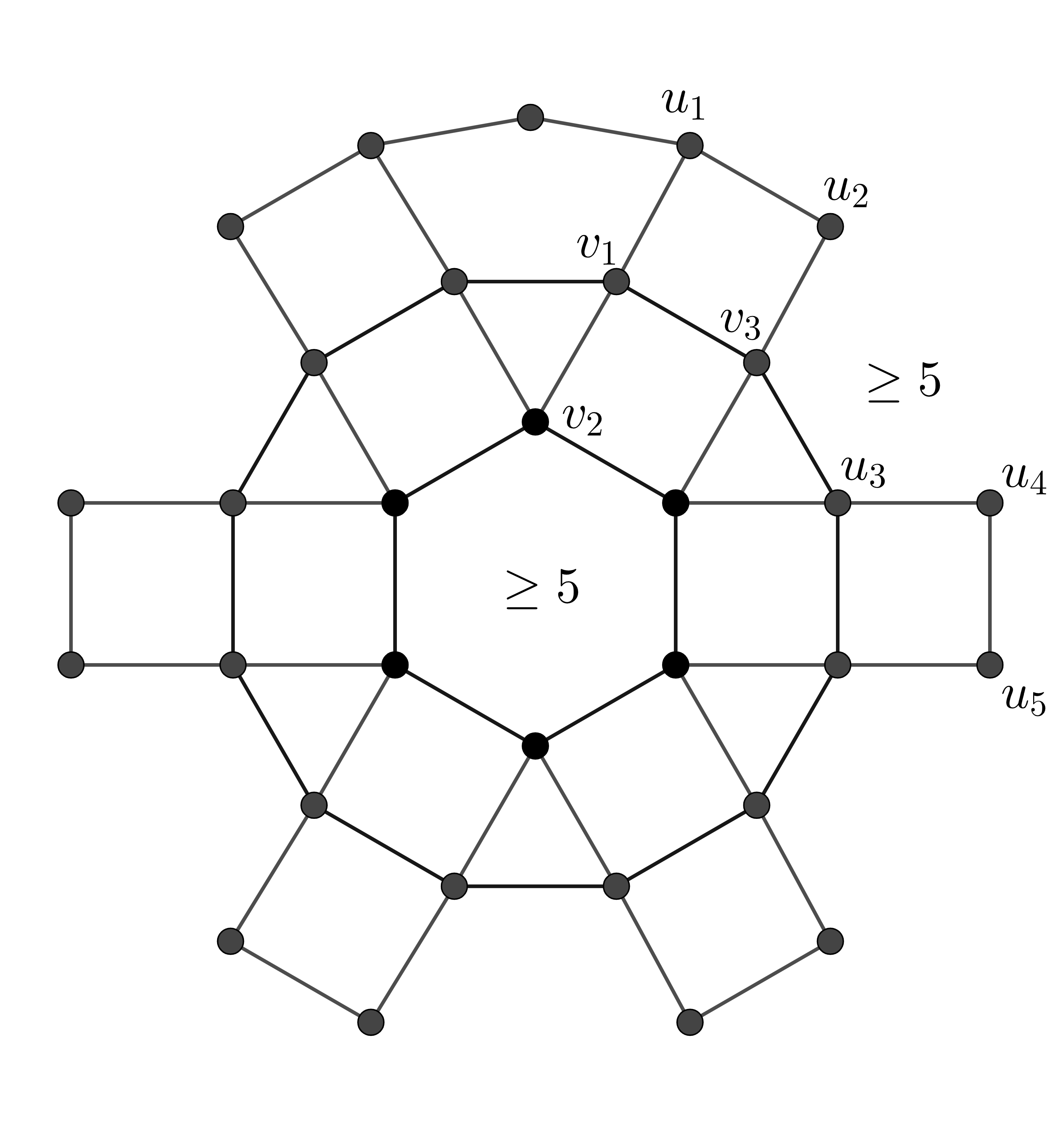}
    \caption{$r(v_1)=(3,4,4,5)$}\label{fig: 3454-1}
\end{figure}
 Here, vertex $v_1$ is what we started with. We claim that the unknown face incident to vertex $v_3$ must be of length 5. Then a similar argument will show that all unknown faces within the 2-neighborhood of the center face must be of length 5. To do so, we count the 5-cycles incident to $v_1$ and to $v_3$. 
Note that $v_1$ and $v_3$ always share the same number of ``houses'' (i.e., the 5-cycle with a chord), and $v_1$ is already incident to a pentagon face. To satisfy $\mathcal C_5(v_1) = \mathcal C_5(v_3)$, we must have a chordless 5-cycle $C$ through $v_3$, and there are three possible situations: (i) $C = \overline{u_1u_2v_3u_3u_4u_1}$, (ii) $C=\overline{u_2v_3u_3u_4u_5u_2}$, and (iii) $u_2,u_4$ have a common neighbor $w$ such that $C = \overline{wu_2v_3u_3u_4w}$. 

Among these situations, (i) is simply ruled out as $\{u_2,u_4\}$ would be a cut-set of the graph. For (ii), the 3-connectedness yields that each of $u_2$ and $u_5$ has an incident edge inside $C$, and these two edges must have a common endpoint $x$ to bound the incident triangle of $u_2$. However, there is still a cut-set $\{x,u_4\}$ of the graph. 
Now we are left with (iii). Using a similar argument, it is impossible to have $\geq 3$ inner neighbors of $C = \overline{wu_2v_3u_3u_4w}$, which is a contradiction. Hence, this $C$ must be a face with length $5$, and thus the claim is justified.

Next, we claim that we will get a contradiction if the center face is of length $\geq 6$. In this case, to have $\mathcal C_5(v_1) = \mathcal C_5(v_2)$, we cannot create more ``houses'' in the outer layer for vertex $v_1$. This forces us to get three square faces in a row. After drawing all the square faces we arrive at Figure \ref{fig: 3454-2}, which is impossible since there are vertices incident to more than 2 triangles.
\begin{figure}
    \includegraphics[width=0.55\textwidth]{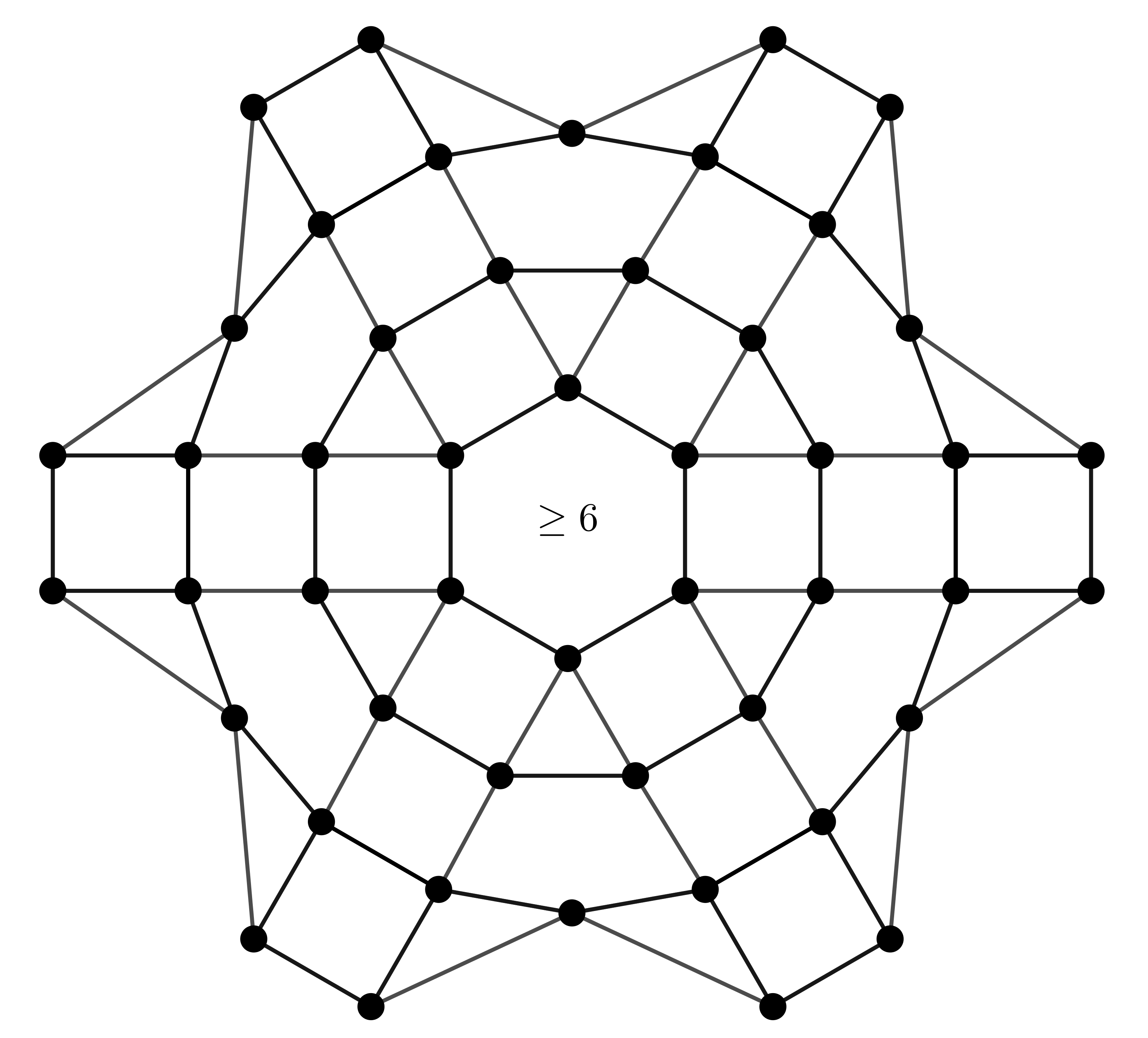}
    \caption{An impossible graph when the center face is of length $\geq 6$.}\label{fig: 3454-2}
\end{figure}
Therefore, we conclude that we must have a pentagon face at the center, and this in turn implies that we must add an additional ``house'' to vertex $v_1$, so we arrive at Figure \ref{fig: 3454-3}.

\begin{figure}
    \includegraphics[width=0.55\textwidth]{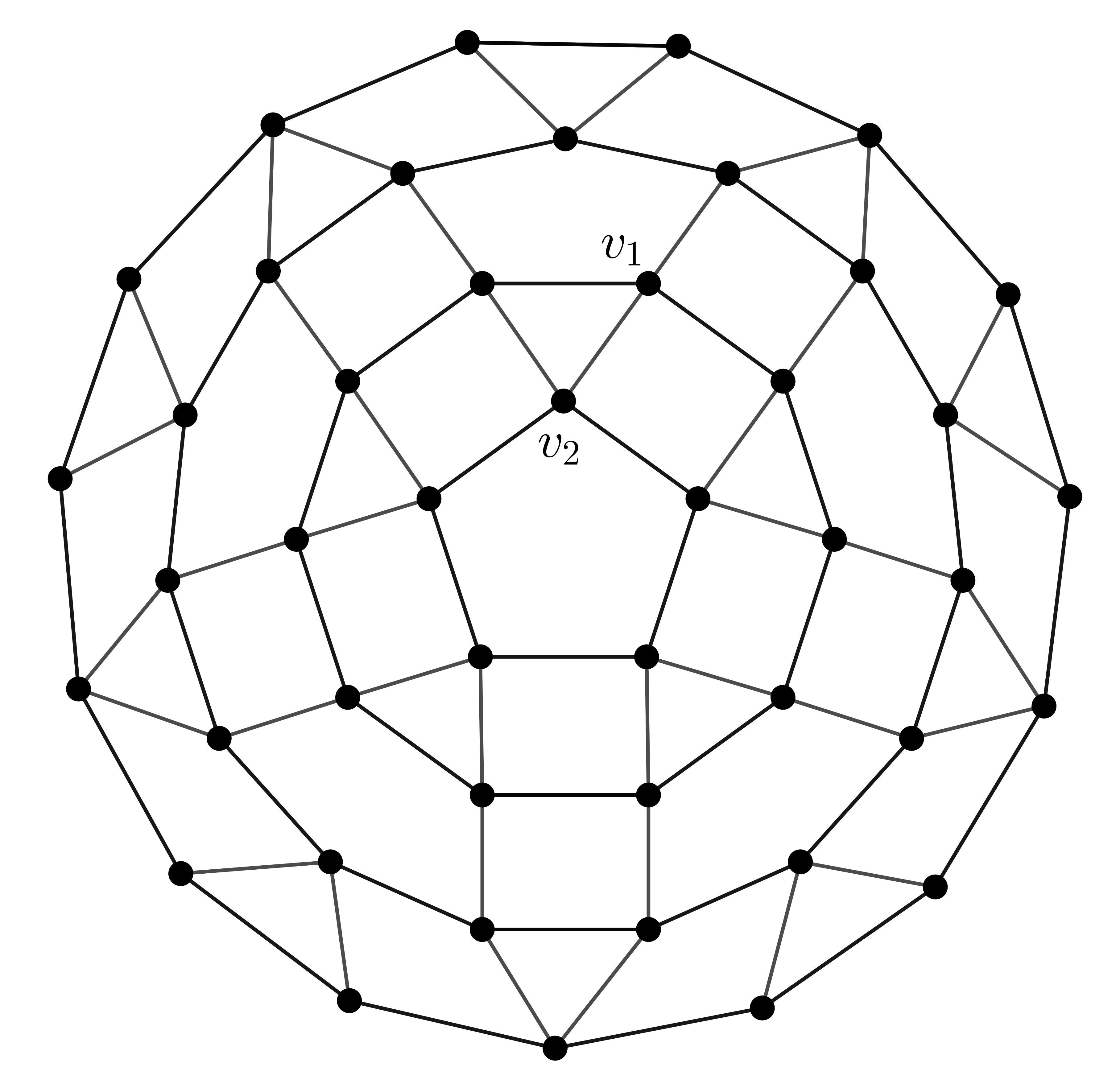}
    \caption{3-neighborhood of $v_1$ with $r(v_1)=(3,4,4,5).$}\label{fig: 3454-3}
\end{figure}
We remark that this is exactly part of the twisted $(3,4,5,4)$-solid. 
Counting the number of 6-cycles through $v_1$ and through $v_2$, we have $\mathcal C_6(v_1) =4\neq 5 = \mathcal C_6(v_2)$, which is contrary to Lemma \ref{th:cycles}. 
Therefore, there is no vertex $v$ satisfying $r(v)=(3,4,4,5)$ in this sub-case, and $v_1$ satisfies $r(v_1)=(3,4,5,4)$.

By a similar counting of the 5-cycles, we exclude $r(v) = (3,4,4,m)$ with $m > 5$, as such vertex $v$ gives $\mathcal C_5(v) \leq 4 < \mathcal C_5(v_1)$. Now the label of each $v$ is of the form $(3,4,m,4)$ with $m \geq 5$. Again, counting the 5-cycles in the 2-neighborhood of $v$, we have 
\[
\mathcal C_5(v) = \begin{cases}
    \, 6 \quad \text{if } r(v) = (3,4,5,4), \\
    \, 5 \quad \text{if } r(v) = (3,4,m,4) \text{ with } m > 5.
\end{cases}
\]
As a consequence, $r(v)\equiv (3,4,5,4)$, and we obtain the $(3,4,5,4)$-solid. 

We remark that the above discussion also completes the proof of Lemma \ref{th:5-face} for $d=4$.

\subsection{The 5-regular cases} Finally we assume that $G$ is 5-regular. The following discussion completes the proof of Lemma \ref{th:3-face}.

\begin{proof}[Proof of Lemma \ref{th:3-face} for $d=5$]
    Suppose that a triangle $C = \overline{a_1a_2a_3a_1}$ is not a face. The 3-connectedness implies that $d_*(a_i) \in \{1,2\}$ for all $i$. Again, we further assume that such $C$ is {\it minimal} among all the triangles that are not faces. Then each $v$ inside $C$ cannot be adjacent to all vertices $a_1,a_2,a_3$ of $C$, and around such $v$ there is at most one face with length $\geq 4$ since $\mathcal C_3(v) = \mathcal C_3(v_1) \geq 4$. Moreover, it follows that each edge inside $C$ lies on at least one triangle. In view of these facts, we can simply rule out the case $d_*(C) = (1,1,1)$, and eliminate the other three cases by drawing the adjacent vertices of $C$: 

    \medskip
    
    \noindent {\it Case: $d_*(C) = (1,1,2)$.} Note that the two adjacent vertices $b_1,b_2$ of $a_1, a_2$ inside $C$ cannot be identical, otherwise around this common neighbor there are at least two faces with length $\geq 4$. Since each of the edges $\overline{a_1b_1}, \overline{a_2b_2}$ belongs to some triangle, both $b_1$ and $b_2$ must be adjacent to $a_3$ (Figure \ref{fig: 3-cycle-1-1-2}). Now $\{b_1,b_2\}$ is a cut-set of the graph, and this violates the 3-connectedness.
    \begin{figure}
    \includegraphics[width=0.5\textwidth]{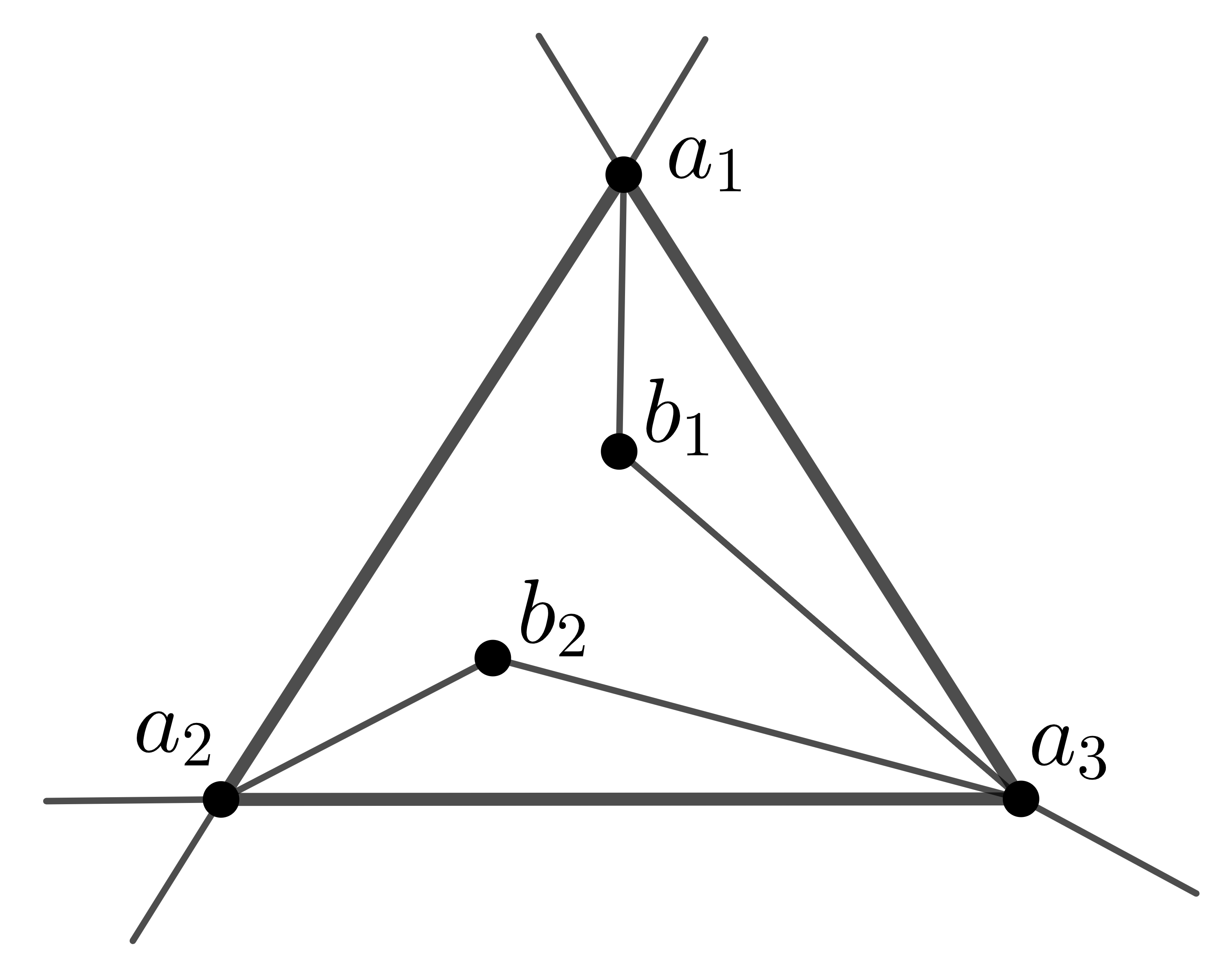}
    \caption{An impossible triangle with $d_*(C) = (1,1,2)$.}
    \label{fig: 3-cycle-1-1-2}
    \end{figure}

    \medskip
    
    \noindent {\it Case: $d_*(C) = (1,2,2)$.} Let $b_1$ be the neighbor of $a_1$ inside $C$. As $\mathcal C_3(a_1) \geq 4$ and $C$ is minimal, we can see that the edge $\overline{a_1b_1}$ belongs to exactly one triangle, and there are at least two incident triangles of $a_1$ outside $C$. Therefore, one of $a_2,a_3$ (say $a_2$) has a common neighbor with $a_1$ outside $C$. By the 3-connectedness there are at least three distinct neighbors of $\{a_1,a_2,a_3\}$ outside $C$, and thus $a_3$ has no common neighbors with $a_1$ and $a_2$ outside $C$. Now we have $\mathcal C_3 = 4$, which forces that $a_2, a_3$ have exactly two and three incident triangles inside $C$ respectively. We can draw four incident triangles $\overline{a_1 b_1 a_3 a_1}, \overline{a_2 b_3 b_2 a_2}, \overline{a_2 a_3 b_3 a_2}$ and $\overline{a_3 b_1 b_3 a_3} $ as in Figure \ref{fig: 3-cycle-1-2-2}. As $\mathcal C_3(b_1) =\mathcal C_3(b_2) = 4$, the undetermined neighbor of $b_3$ must be adjacent to $b_1$ and $b_2$. This leads to $\mathcal C_3(b_3) = 5$, a contradiction. 
    \begin{figure}
    \includegraphics[width=0.5\textwidth]{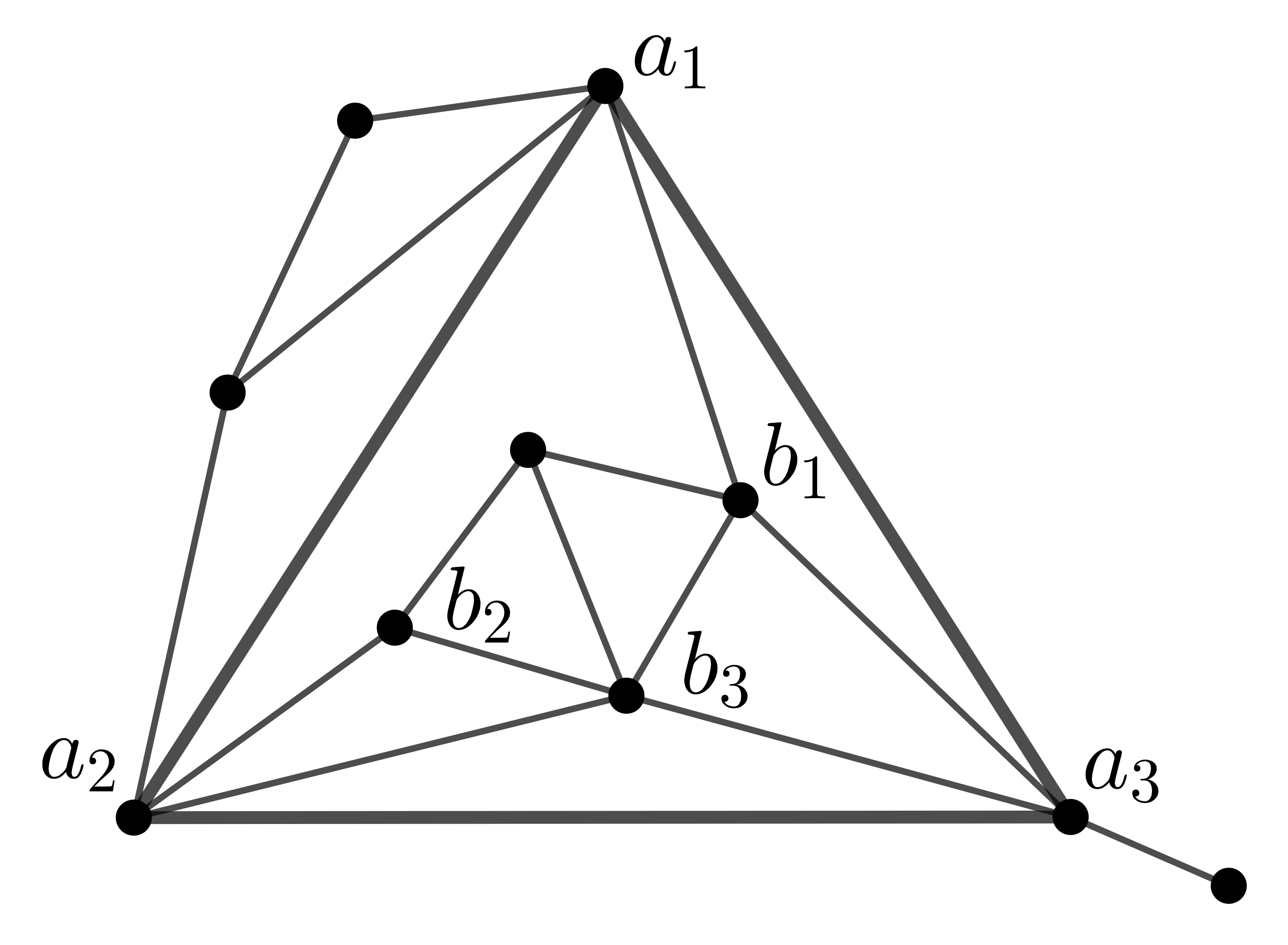}
    \caption{An impossible triangle with $d_*(C) = (1,2,2)$.}
    \label{fig: 3-cycle-1-2-2}
    \end{figure}

    \medskip
    
    \noindent {\it Case: $d_*(C) = (2,2,2)$.} Again by the 3-connectedness, to have three distinct neighbors of $\{a_1,a_2,a_3\}$ outside $C$,  no incident triangles outside $C$ are allowed. It follows from $\mathcal C_3 \geq 4$ that each $a_i$ has exactly three incident triangles inside $C$ as in Figure \ref{fig: 3-cycle-2-2-2}. However, the center triangle $\overline{b_1b_2b_3b_1}$ cannot be a face, and this violates the minimality of $C$. The proof of Lemma \ref{th:3-face} is complete.
    \begin{figure}
    \includegraphics[width=0.5\textwidth]{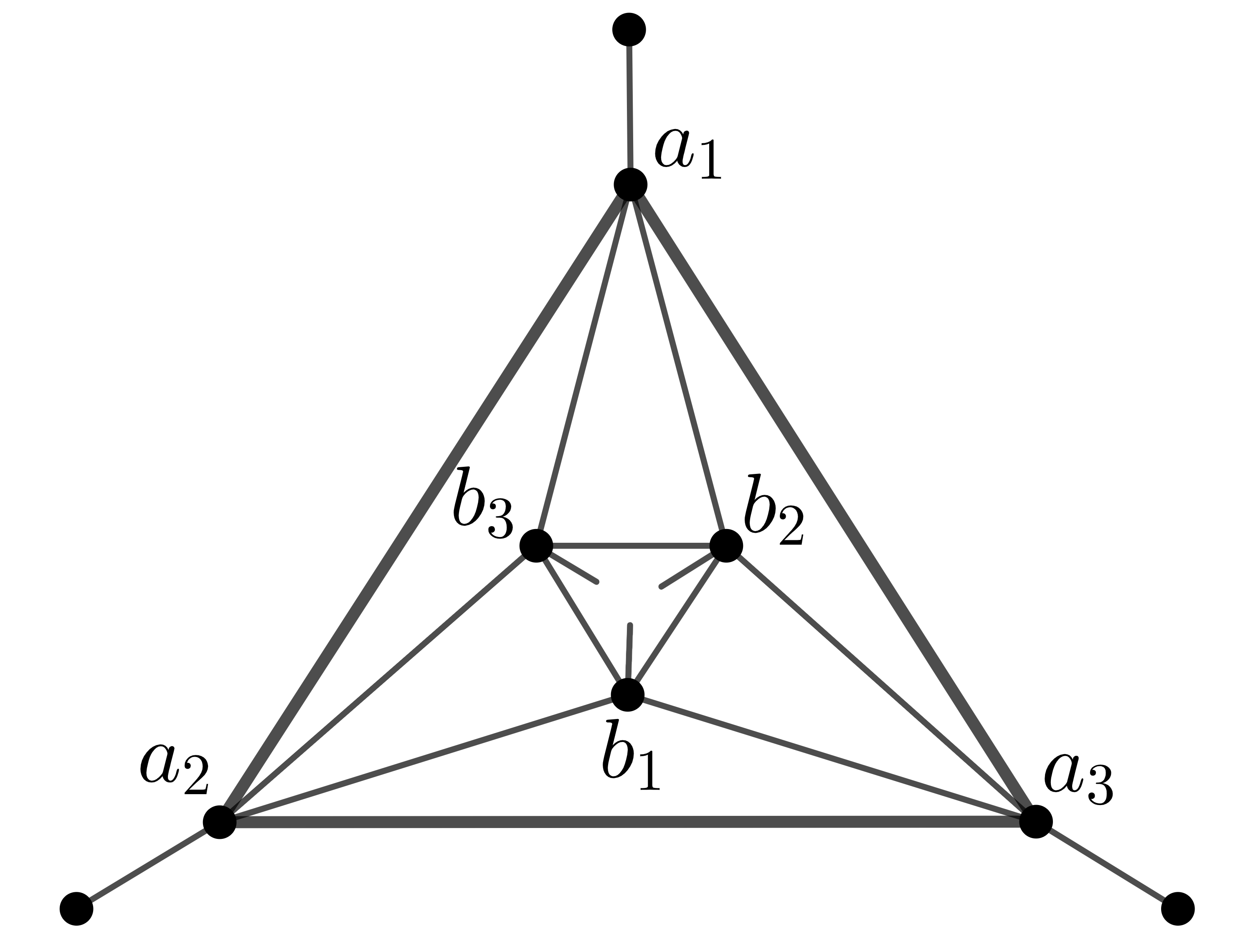}
    \caption{An impossible triangle with $d_*(C) = (2,2,2)$.}
    \label{fig: 3-cycle-2-2-2}
    \end{figure}
\end{proof}

As a result, in case (14) we have $r(v) \equiv (3,3,3,3,3)$, and the graph is isomorphic to icosahedron. To solve the last case ($d=5,r_1=3,k=4$), we also need the fact that every chordless 4-cycle must be a face there.  

\begin{proof}[Proof of Lemma \ref{th:4-face} for $d=5$]
    Suppose that a chordless $4$-cycle $C = \overline{a_1a_2a_3a_4a_1}$ is not a face, and such $C$ is minimal. This means that every chordless $4$-cycle inside $C$ must be a face, and thus, every vertex inside $C$ is adjacent to at most $2$ vertices on $C$. Note that $\mathcal C_3 = \mathcal C_3(v_1) = 4$ or $5$. If $\mathcal C_3 = 5$, then each $a_i$ has exactly two neighbors inside $C$, and $\{a_1,a_2,a_3,a_4\}$ must have a common neighbor outside $C$, which is a cut-vertex of the graph. Therefore, we are left with $\mathcal C_3 = 4$. For the case $d_*(a_1) = 0$, it is impossible to have $\geq 3$ distinct neighbors of $\{a_2,a_3,a_4\}$ inside $C$, which violates the 3-connectedness. Similarly, we rule out the case $d_*(a_i) = 3$. 
    
    Since the inner degree sequence is defined  lexicographically, if $d_*(a_1) = 2$, then we must have $d_*(C) = (2,2,2,2)$. Since there are at least three distinct neighbors outside $C$, the four incident edges outside $C$ provide at most one incident triangle, which is not enough to make $\mathcal C_3(a_i) = 4$ for each $i=1,2,3,4$. 
    
    Now the only possibility is $d_*(a_1) = 1$. Let $b_1$ be the neighbor of $a_1$ inside $C$. To have $\mathcal C_3(a_1) = 4$, $a_1$ has exactly three incident triangles outside $C$, and either $a_2$ or $a_4$ should be the second neighbor of $b_1$ on $C$. As the only face around $b_1$ with length $>3$ is bounded by $\overline{a_1b_1}$, the second neighbor of $b_1$ on $C$ should have two neighbors inside $C$. Since $d_*(a_2)\le d_*(a_4)$, we may assume that this neighbor of $b_1$ is $a_4$, and $d_*(a_4) = 2$. 
    
    If $d_*(a_2) = 1$, then following the same argument as above, the neighbor $b_2$ of $a_2$ inside $C$ is adjacent to $a_3$, and $d_*(a_3) = 2$. To have $\mathcal C_3(a_2) = \mathcal C_3(a_3) = \mathcal C_3(a_4) = 4$, there are three incident triangles of $a_2$ outside $C$, three incident triangles of $a_3$ as well as three of $a_4$ inside $C$, and thus $a_3,a_4$ have a common neighbor $b_3$ inside $C$ (Figure \ref{fig: 4-cycle-1-1-2-2}). Now either of the two undetermined faces around $b_3$ is a triangle. However, no matter which one is not a triangle, it becomes the second face around $b_1$ or $b_2$ with length $>3$, which is contrary to $\mathcal C_3(b_1) = \mathcal C_3(b_2) = 4$.
    \begin{figure}
    \includegraphics[width=0.5\textwidth]{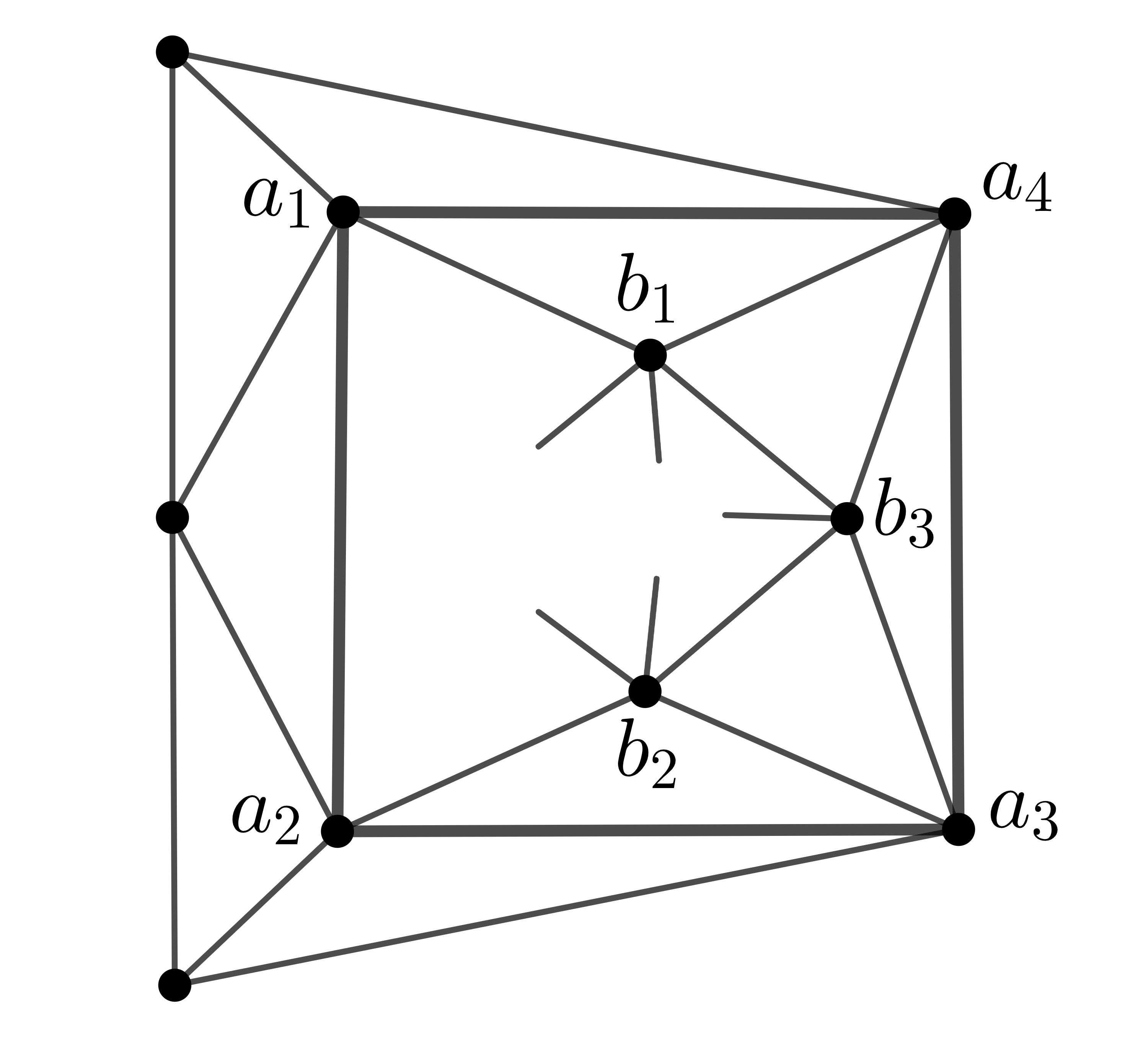}
    \caption{An impossible 4-cycle with $d_*(C) = (1,1,2,2)$.}
    \label{fig: 4-cycle-1-1-2-2}
    \end{figure}

    The last case we will rule out is $d_*(a_2) = 2$. As $\mathcal C_3(a_2) = 4$, $a_2$ has two incident triangles outside $C$, so that the common neighbor of $a_1$ and $a_2$ is adjacent to $a_3$. To have three distinct neighbors of $\{a_1,a_2,a_3,a_4\}$ outside $C$, $d_*(a_3)$ should be $1$, and hence no triangles are bounded by $\overline{a_3a_4}$, which violates $\mathcal C_3(a_3) = \mathcal C_3(a_4) = 4$. The proof of Lemma \ref{th:4-face} is complete.
\end{proof}

\subsection*{Case (15)} ($d=5,r_1=3,k=4$) As $p(v_1) > \frac 32$ by \eqref{eq:3.1}, $r_5(v_1) = 4$ or $5$. Since around each vertex four triangular faces are determined by Lemma \ref{th:3-face}, we are able to draw the 2-neighborhood of a face with length $\geq 4$, as in Figure \ref{fig: 3-3-3-3-4}. It is straightforward to count that 
\[
\mathcal C_4(v) = \begin{cases}
\, 7 \quad \text{if } r(v) = (3,3,3,3,4), \\
\, 6 \quad \text{if } r(v) = (3,3,3,3,m) \text{ with } m > 4.
\end{cases}
\]
In view of Lemma \ref{th:cycles}, $r(v) \equiv (3,3,3,3,4)$ and the graph is isomorphic to the $(3,3,3,3,4)$-solid provided that $r_5(v_1) = 4$.
\begin{figure}
\includegraphics[width=0.45\textwidth]{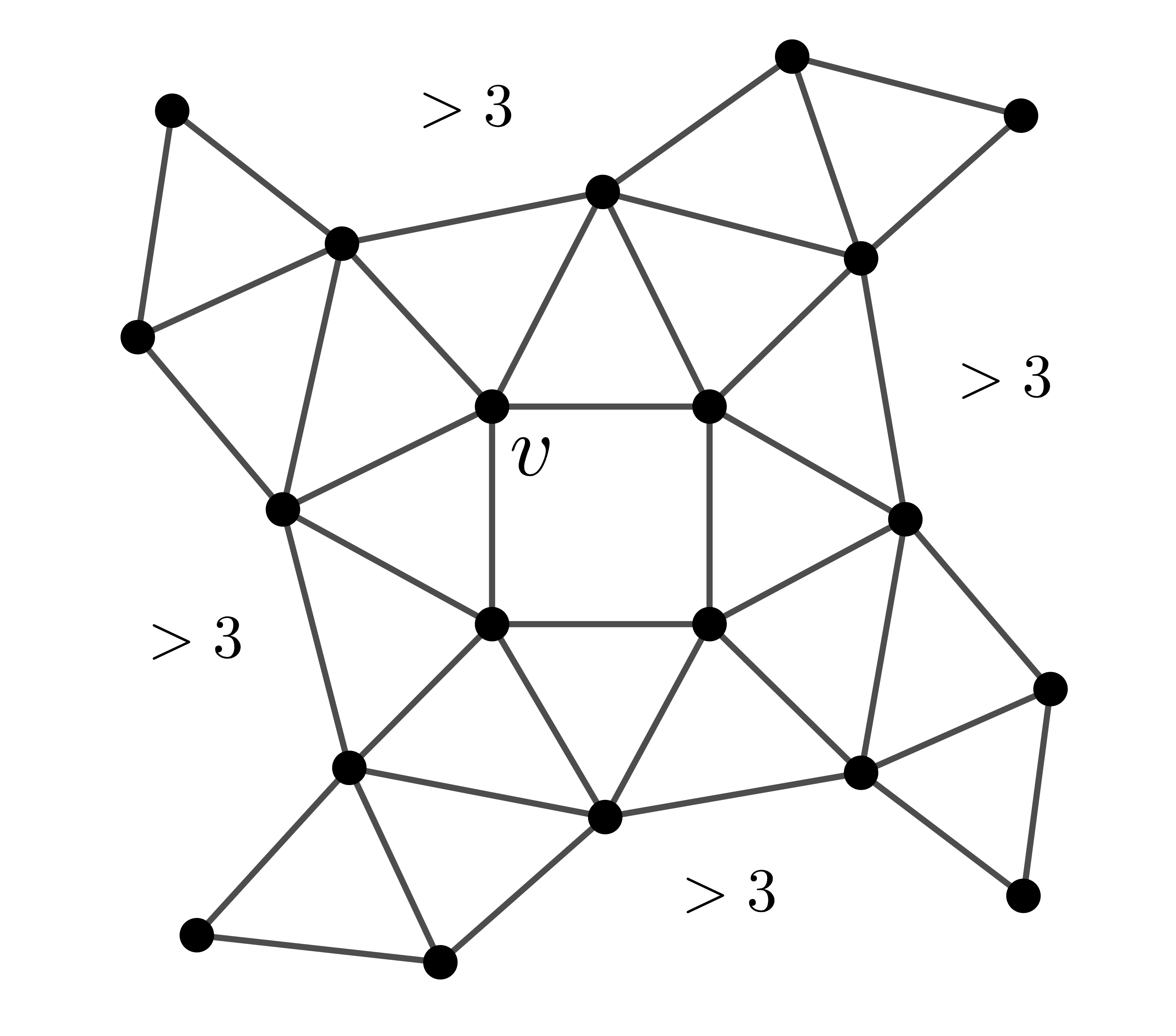} 
\includegraphics[width=0.45\textwidth]{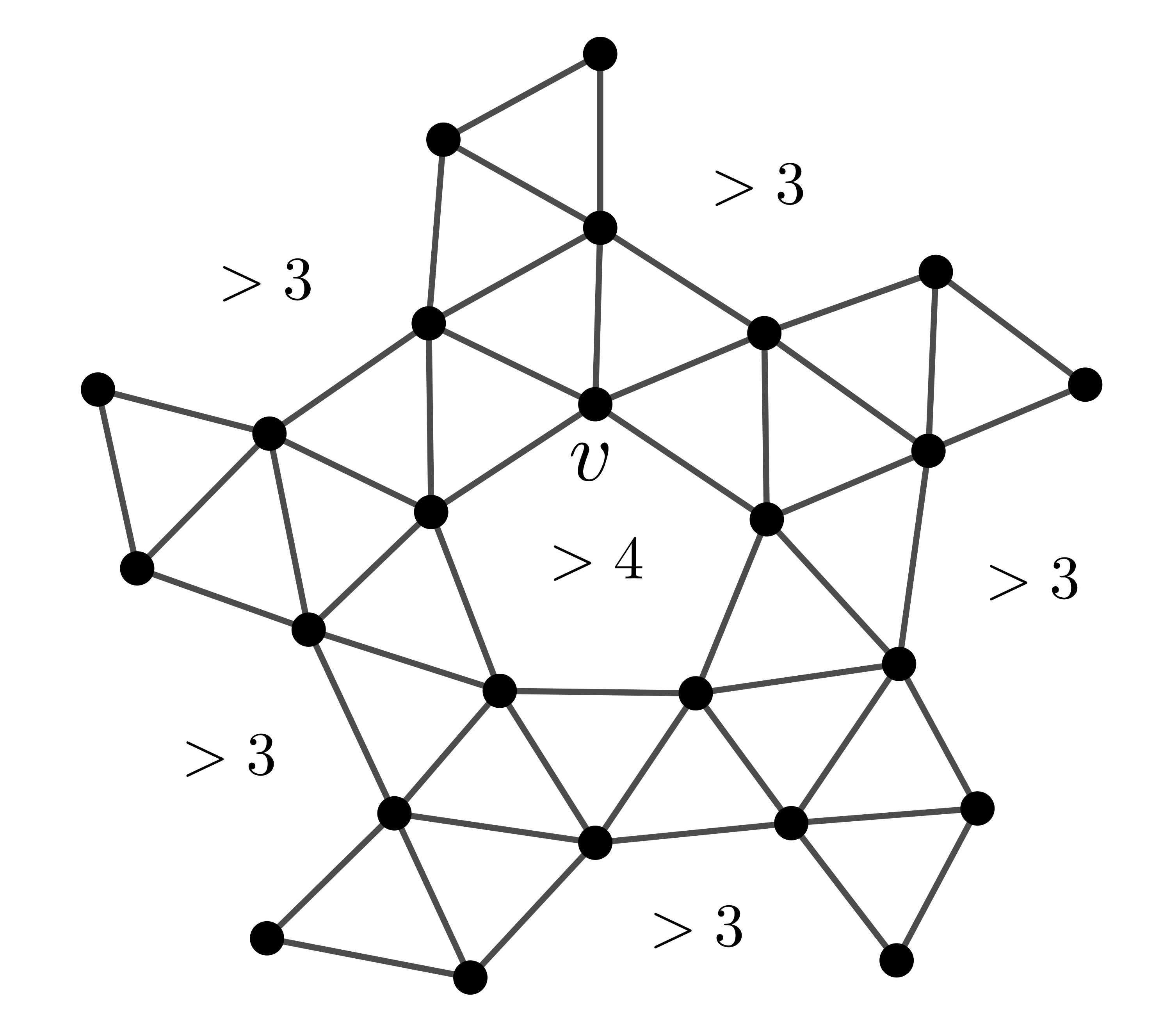} 
\caption{The $2$-neighborhood of faces with length $4$ and $>4$ respectively.}
\label{fig: 3-3-3-3-4}
\end{figure}

When $r_5(v_1) = 5$, each vertex $v$ lies on a face with length $\geq 5$. Note that in the 2-neighborhood of such face, the vertices in the 2-neighborhood of $v$ with undetermined neighbors are $u_1$, $u_2$ and $u_3$. As there are no faces with length $4$, $u_2$ is not adjacent to $u_3$. Moreover, $u_1$ is not adjacent to $u_3$ either, otherwise they form a 4-cycle that is not a face, which is contrary to Lemma \ref{th:4-face}. Therefore, each 5-cycle through $v$ can be found in the Figure \ref{fig: 3-3-3-3-5}, which is either a union of three adjacent triangles or a face with length $5$ (we remark that the proof of Lemma \ref{th:5-face} is complete here). A counting of these 5-cycles gives
\[
\mathcal C_5(v) = \begin{cases}
\, 11 \quad \text{if } r(v) = (3,3,3,3,5), \\
\, 10 \quad \text{if } r(v) = (3,3,3,3,m) \text{ with } m > 5.
\end{cases}
\]
Again by Lemma \ref{th:cycles}, we finally have $r(v) \equiv (3,3,3,3,5)$, and the graph is isomorphic to the $(3,3,3,3,5)$-solid. 
\begin{figure}
\includegraphics[width=0.45\textwidth]{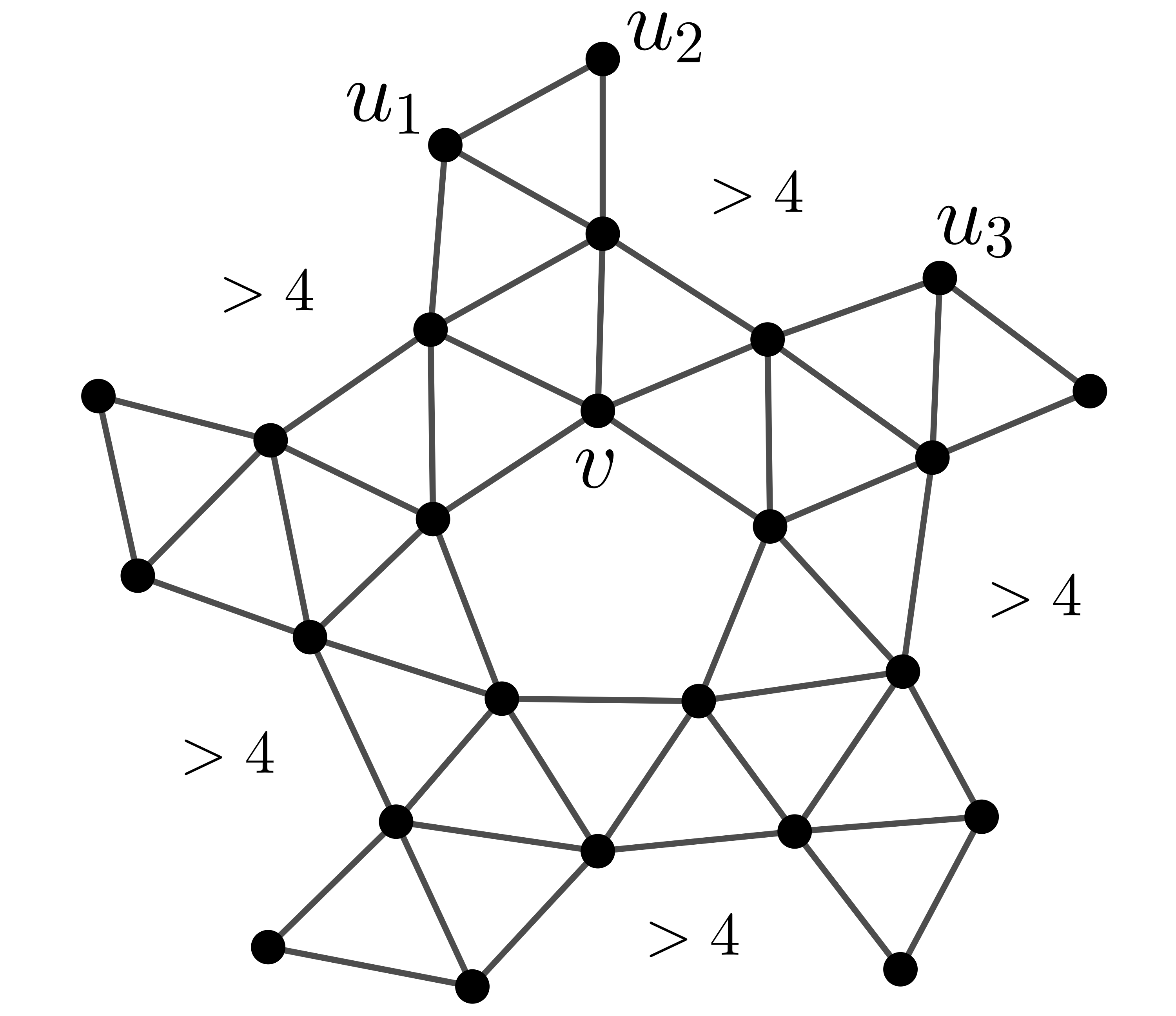} 
\includegraphics[width=0.45\textwidth]{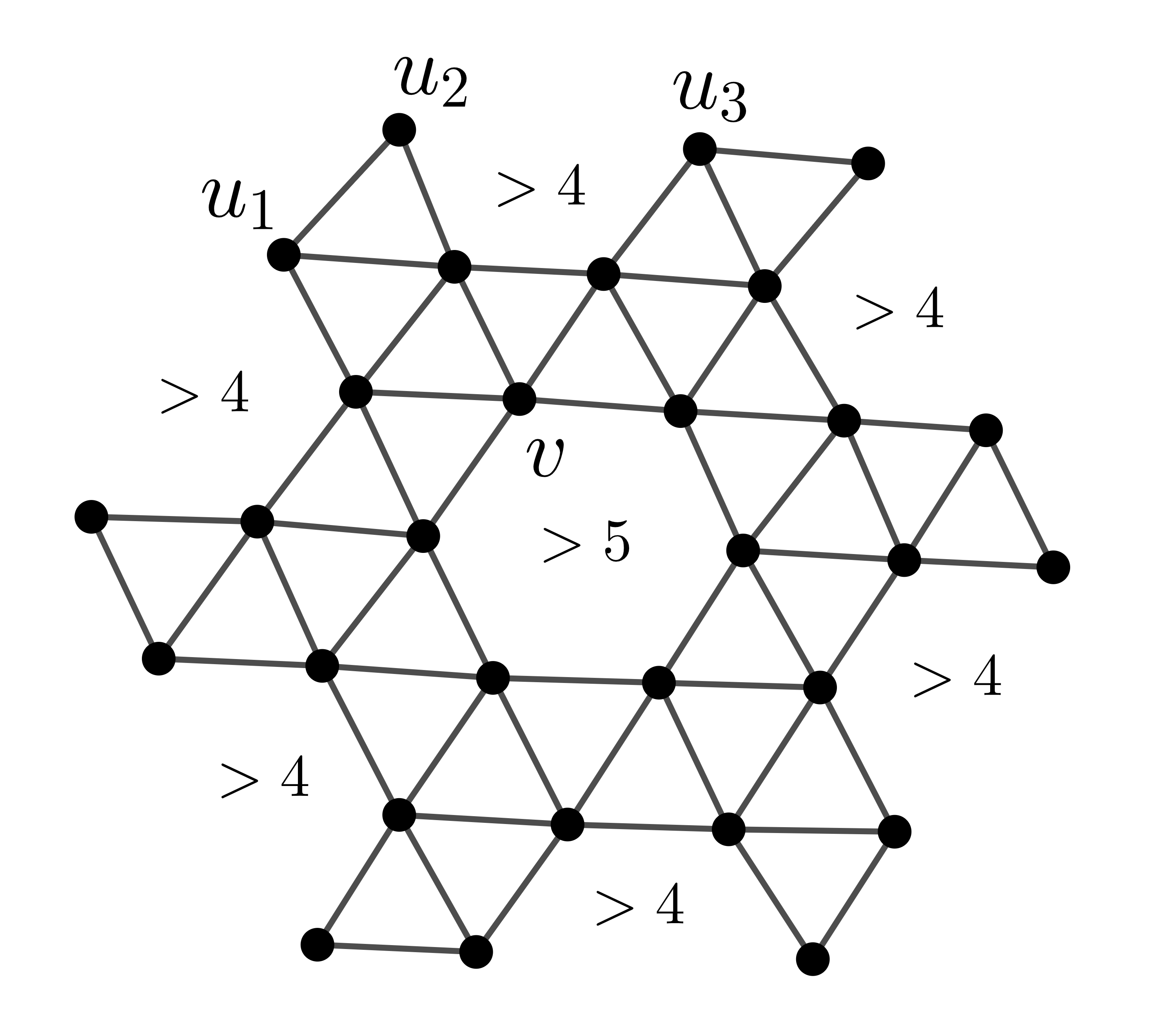} 
\caption{The $2$-neighborhood of faces with length $5$ and $>5$ respectively when $r_5(v_1) = 5$.}
\label{fig: 3-3-3-3-5}
\end{figure}

\bibliography{references}{} 
\bibliographystyle{alpha}

\end{document}